\documentclass{amsart}
\usepackage{mathrsfs}
\usepackage[colorlinks,linkcolor=blue,anchorcolor=blue,citecolor=blue,CJKbookmarks=True]{hyperref}
\usepackage{lineno,hyperref}
\usepackage{color}

\usepackage{tikz}
\usetikzlibrary{positioning, shapes.geometric}
\usepackage{amsmath,amssymb,amsfonts}%
\usepackage{amsthm}%
\usepackage{mathrsfs}%
\usepackage[title]{appendix}%
\usepackage{xcolor}%
\usepackage{graphicx}
\usepackage{subfigure}
\usepackage{float}
\usepackage{diagbox}
\allowdisplaybreaks \allowdisplaybreaks[4]
\newtheorem{Def}{Definition}[section]
\newtheorem{lem}[Def]{Lemma}
\newtheorem{theo}[Def]{Theorem}

\newtheorem{rem}[Def]{Remark}

\newtheorem{assum}{Assumption}
\newtheorem{cor}[Def]{Corollary}
\newtheorem{con}{Condition}
\usepackage{pgf}
\definecolor{Green}{RGB}{0,128,0}

\newcommand{\E}{\mathbb{E}}
\newcommand{\N}{\mathbb{N}}
\newcommand{\LL}{\langle}
\newcommand{\RR}{\rangle}

\newcommand{\R}{\mathbb R}

\newcommand{\mcal}{\mathcal}

\newcommand{\mscr}{\mathscr}
\newcommand{\mbb}{\mathbb}
\newcommand{\mbf}{\mathbf}

\newcommand{\ud}{\mathrm d}
\newcommand{\PD}{\partial}
\numberwithin{equation}{section}

\begin{document}

\title[DSAV method for Langevin dynamics]{A uniform-in-time weakly convergent explicit numerical method for the underdamped Langevin equation with polynomial potentials}


\author{Diancong Jin}
\address{School of Mathematics and Statistics, Huazhong University of Science and Technology;	Hubei Key Laboratory of Engineering Modeling and Scientific Computing, Huazhong University of Science and Technology}
\curraddr{Wuhan 430074, China}
\email{jindc@hust.edu.cn}
\thanks{This work was funded by National Natural Science Foundation of China (No. 12201228).
}


\subjclass[2020]{60H35, 37M25, 65C30}

\date{}

\dedicatory{}
\keywords{Langevin dynamics, ergodicity, DSAV method, uniform-in-time weak convergence, polynomial potential}
\begin{abstract}
The underdamped Langevin equation is a fundamental model in statistical mechanics for sampling Gibbs measures and simulating molecular dynamics, for which numerical methods with uniform-in-time weak convergence are essential for accurately reproducing long-time statistical observables and invariant measures of the underlying dynamics. Currently, such uniform-in-time weak convergence is established for implicit schemes, but remains unknown for explicit ones under polynomially growing potentials. To improve efficiency in long-time simulations, we propose the first explicit numerical method for the underdamped Langevin equation with polynomially growing potentials that is proven to achieve uniform-in-time weak convergence. The explicit numerical method is constructed by introducing a dissipativity on the scalar auxiliary variable (SAV), which we call the DSAV method. The proposed DSAV method enables the approximation of the invariant measure for the underdamped Langevin equation with a precision of $\varepsilon$ at a significantly reduced computational cost of $\mathcal{O}(\varepsilon^{-1} \log(\varepsilon^{-1}))$.
In addition, we establish the existence and positivity of the density function of the numerical solution without using the Malliavin calculus. Numerical experiments are performed to verify the theoretical findings and demonstrate the long-time stability of the proposed numerical method.
\end{abstract}

\maketitle

\section{Introduction} 
\subsection{Research background} \label{Sec:1.1} 
The underdamped Langevin equation describes the motion of a particle subject to friction and a stochastic forcing:
\begin{equation}\label{eq:Langevinorg}
	\left\{
	\begin{aligned}
		\ud \bar{\mathfrak{P}}(t) &= -\nabla V(\bar{\mathfrak{Q}}(t))\,\ud t - v \bar{\mathfrak{P}}(t)\,\ud t +\ud W(t),\\
		\ud \bar{\mathfrak{Q}}(t) &= \bar{\mathfrak{P}}(t)\,\ud t,
	\end{aligned}
	\right.
\end{equation}
where \(V\in C^\infty_{\rm p}(\mathbb{R}^d;\mathbb{R})\) is a confining potential, \(v>0\) is the friction coefficient, and \(W\) is a \(d\)-dimensional Brownian motion according to the fluctuation–dissipation relation \cite{Langevin2,Pav14}. Here, \(\bar{\mathfrak{Q}}(t)\) and \(\bar{\mathfrak{P}}(t)\) denote the particle’s position and momentum, respectively.
The equation \eqref{eq:Langevinorg} arises in a broad range of applications, including chemical kinetics \cite{vanLNM}, stochastic thermodynamics \cite{LeePRE}, nonequilibrium statistical mechanics \cite{ChetriteCMP}, and biomedical engineering \cite{RegevPhysA}. In addition, endowed with rich structural properties and closely connected to Hamiltonian systems, hypoelliptic operators, and ergodic theory, it thereby serves as a fundamental model in stochastic dynamics.

An intrinsic dynamical property of \eqref{eq:Langevinorg} is that it admits a unique invariant measure 
$\ud\widetilde{\mu}=\widetilde{\rho}(p,q)\ud p\ud q$ with $\widetilde{\rho}(p,q)=\frac{1}{Z_0}\exp(-2v(\frac{1}{2}\|p\|^2+V(q)))$, $(p,q)\in\mbb R^{2d}$ with $Z_0$ being the normalizing constant (cf.\ \cite[Proposition 6.1]{Pav14}). The uniqueness of the invariant measure further implies the ergodicity (cf.\ \cite{Daprato06}), meaning that the temporal average converges to the
spatial average in the following sense:
\begin{equation*}
	\lim\limits_{T\to\infty}\frac{1}{T}\int_0^T \E\big[\phi\big(\bar{\mathfrak{P}}(t,p,q),\bar{\mathfrak{Q}}(t,p,q)\big)\big]\ud t=\int_{\R^{2d}}\phi(p,q)\widetilde{\mu}(\ud p\ud q)=:\widetilde{\mu}(\phi)~\text{in}~L^2(\mbb R^{2d};\widetilde{\mu}).
\end{equation*}
Here, $\big(\bar{\mathfrak{P}}(t,p,q),\bar{\mathfrak{Q}}(t,p,q)\big)$ denotes the solution of \eqref{eq:Langevinorg} at the time $t$ with the initial value $\big(\bar{\mathfrak{P}}(0),\bar{\mathfrak{Q}}(0)\big)=(p,q)$, and $\phi$ is any function belonging to $L^2(\mbb R^{2d};\widetilde{\mu})$. 
The ergodicity provides deep insight into the long-time dynamics and statistical properties of \eqref{eq:Langevinorg}, and underpins many sampling algorithms, including the Hybrid Monte Carlo method \cite{SanzSernaLNM} and its randomized variants \cite{Bou-RabeeAAP}. In practice, implementing these algorithms involves approximating \(\widetilde{\mu}\) or the ergodic limit \(\widetilde{\mu}(\phi)\), which is commonly achieved through carefully designed numerical discretizations of \eqref{eq:Langevinorg} over long-time horizons.

Owing to its broad applicability and physical richness, the numerical analysis of \eqref{eq:Langevinorg}, especially for long-time simulations, has attracted considerable attention. 
When the potential $V$ grows quadratically, there is already a substantial body of work on the numerical approximation of the invariant measure for \eqref{eq:Langevinorg} (cf.\ \cite{AVZ,AVZ15,BRO10,HS22,HSW17,MSH02} and references therein), where the convergence of numerical invariant measures usually relies on uniform-in-time strong or weak convergence analysis of numerical methods. 
In contrast, the analysis of uniform-in-time convergence and ergodicity of numerical methods becomes considerably more challenging in the case of superquadratic or general polynomially growing potentials such as the double-well potential $V(q) = \frac{1}{4}\|q\|^4 - \frac{1}{2}\|q\|^2,~q \in \mathbb{R}^d$. This is because  the drift coefficient $
b(p,q):=\big(-(\nabla V(q))^\top - v p^\top,\; p^\top\big)^\top
$ of \eqref{eq:Langevinorg} does not fulfill the one-sided Lipschitz condition or even the contractivity  at infinity (see \cite[Eq.\ (2.3)]{HuangLi25} for the definition of the contractivity  at infinity).
In this more demanding setting, to establish the existence of invariant measures and uniform-in-time weak error analysis, a key difficulty 
lies in how to design numerical methods satisfying a Lyapunov condition of the form
\begin{equation}\label{eq:FLC}
	\E[\mathcal{V}(X_{n+1})|\mathscr{F}_{t_n}]\le K_1 \mathcal{V}(X_n)+K_2\tau,~ n=0,1,2,\ldots
\end{equation}
for two constants $K_1\in(0,1)$ and $K_2>0.$
Here, $X_n$ denotes the numerical solution of  $(\bar{\mathfrak{P}}(n\tau),\bar{\mathfrak{Q}}(n\tau))$, $\mathcal{V}$ is a Lyapunov function, $\tau>0$ is the time stepsize, and $t_n=n\tau$. 
For \eqref{eq:Langevinorg} with superquadratic potentials, only implicit schemes have been shown to satisfy such a condition, including the implicit Euler method \cite{T02}, a splitting backward Euler scheme \cite{MSH02}, and a class of splitting methods based on invariant-preserving discretizations \cite{CDHZ25}.
Despite the superior stability of  these implicit methods, explicit schemes are generally preferred in stochastic computing, especially for long-term simulations, due to their simple implementation and lower computational cost. Motivated by this practical advantage, the authors in \cite{DaiIMA,HutzenthalerMC} proposed and analyzed stopped increment-tamed schemes for \eqref{eq:Langevinorg}, whose strong convergence rate in the finite time horizon was established via the exponential integrability of numerical solutions.
Recently, \cite{DJW25} proposed an explicit  splitting scalar auxiliary variable (SSAV) method, and obtained the weak convergence rate with error constant polynomially depending on the time length. Nevertheless,
to the best of our knowledge, no explicit methods have been shown to converge uniformly in time 
to \eqref{eq:Langevinorg}, for the case of polynomially growing potential $V$. This work is devoted to filling this gap and to developing explicit schemes that effectively approximate the invariant measure $\tilde{\mu}$. 

\subsection{A novel explicit numerical method}
In order to ensure uniform-in-time weak convergence, we  propose the following explicit numerical method for \eqref{eq:Langevinorg}:
\begin{align}\label{explicitmethod}
	\bar{\mathfrak{P}}_n:=P_n-\frac{v}{2}Q_n,~\bar{\mathfrak{Q}}_n:=Q_n,~n=0,1,\ldots,
\end{align}
where the sequence $\{(P_n,Q_n)\}_{n\ge0}$ is defined recursively by
\begin{subequations}\label{eq:SAVN}
	\begin{align}\label{eq:SAV1P}
		\frac{P_{n+1}-P_n}{\tau}=&\;\frac{r_{n+1}}{\sqrt{U_\alpha(Q_n)}}\left(-\nabla V(Q_n)+\alpha Q_n\right)-\frac{v}{2}P_{n+1}\\
		&\;+(\frac{v^2}{4} -\alpha)Q_{n+1}+\tau^{-1} \Delta W_n,\notag\\
		\frac{Q_{n+1}-Q_n}{\tau}=&\;P_{n+1}-\frac{v}{2} Q_{n+1},\label{eq:SAV1Q}\\
		\frac{r_{n+1}-r_n}{\tau}=&\;\frac{1}{2\sqrt{U_\alpha(Q_n)}}\langle\nabla V(Q_n)-\alpha Q_n,P_{n+1}\rangle-\frac{v}{4}\beta_0r_{n+1}-v\zeta_\alpha(Q_n)\label{eq:SAV1R}
	\end{align}
\end{subequations}
with $P_0-\frac{v}{2}Q_0=\bar{\mathfrak{P}}(0)$, $Q_0=\bar{\mathfrak{Q}}(0)$, and $r_0=\sqrt{U_\alpha(Q_0)}$.
Here, $\Delta W_n:=W(t_{n+1})-W(t_n)$, $	U_\alpha(q):=V(q)-\frac{\alpha}{2}\|q\|^2+C_{V}$ for some $\alpha>\frac{v^2}{4}$ and $C_V\ge 0$, and $\zeta_\alpha$ is defined by \eqref{eq:zetaalpha}.

The intermediate numerical  solution $\{(P_n,Q_n,r_n)\}_{n\ge 0}$ is constructed by applying the linearly implicit method to \eqref{eq:SAV1}, an extended system  with a scalar auxiliary variable (SAV).   Different from the existing SAV technique,  we design a dissipative term $-\frac{v}{4}\beta_0r_{n+1}$ in the  equation  \eqref{eq:SAV1R} by leveraging the polynomial algebraic structure of the potential $V$ (see Condition \ref{condition2}).  This dissipative term plays a key role in ensuring the Lyapunov condition \eqref{eq:FLC} and the uniform-in-time weak convergence.
Thus, we call the proposed numerical method \eqref{explicitmethod}-\eqref{eq:SAVN} the dissipative SAV (DSAV) method for \eqref{eq:Langevinorg}.  
To maintain the continuity of the main text, we defer the detailed construction procedure of the DSAV method and  its explicit solvability to Section \ref{Sec:construction}. 

\subsection{Main results}
Our first main result establishes the uniform-in-time first-order  weak convergence for the DSAV method \eqref{explicitmethod}-\eqref{eq:SAVN}. 
\begin{theo}\label{theo:weakorder}
	Let Assumptions  \ref{asp:V1} and \ref{asp:V0} hold and $(p,q)\in\mbb R^{2d}$ be given. Consider the numerical solution $\{(\bar{\mathfrak{P}}_n,\bar{\mathfrak{Q}}_n)\}_{n\ge 0}$ generated by the DSAV method \eqref{explicitmethod}-\eqref{eq:SAVN}   with $(P_0,Q_0,r_0)=(p+\frac{v}{2}q,q,\sqrt{U_\alpha(q)})$. Then for any $\varphi\in C^\infty_{\rm p}(\mbb R^{2d};\mbb R)$, there is a constant $C>0$ independent of $\tau$ such that
	\begin{align*}
		\sup_{n\in\mbb N}\big|\E[\varphi(\bar{\mathfrak{P}}_n,\bar{\mathfrak{Q}}_n)]-\E[\varphi(\bar{\mathfrak P}(t_n,p,q),\bar{\mathfrak Q}(t_n,p,q))]\big|\le C\tau.
	\end{align*}
\end{theo}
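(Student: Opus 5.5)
The proof will follow the standard Talay–Tubaro / Kolmogorov-equation telescoping argument, made uniform in time by an exponential decay estimate on the derivatives of the solution of the Kolmogorov equation.

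\textbf{Setting.} Let $u(t,p,q):=\E[\varphi(\bar{\mathfrak P}(t,p,q),\bar{\mathfrak Q}(t,p,q))]$ and write $X_n=(\bar{\mathfrak P}_n,\bar{\mathfrak Q}_n)$. Then
\begin{align*}
\E[\varphi(X_n)]-u(t_n,p,q)=\sum_{k=0}^{n-1}\E\big[u(t_{n-k-1},X_{k+1})-u(t_{n-k},X_k)\big].
\end{align*}
By the Markov property, $u(t_{n-k},x)=\E[u(t_{n-k-1},X^{\rm ex}(\tau,x))]$, where $X^{\rm ex}(\tau,x)$ is the exact flow after one step. Each summand is therefore a one-step weak error of $u(t_{n-k-1},\cdot)$ evaluated along the numerical chain.

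\textbf{Ingredient (i): uniform moments.} I need $\sup_n\E[\mathcal V(P_n,Q_n,r_n)^m]<\infty$ for every $m$. I will derive this from the Lyapunov condition \eqref{eq:FLC} for the DSAV method. The Lyapunov function will be a modified energy of the form $\frac12\|P\|^2+r^2+\frac{\alpha-v^2/4}{2}\|Q\|^2$ plus cross terms. The dissipative term $-\frac v4\beta_0 r_{n+1}$ together with Condition \ref{condition2} supplies the contraction $K_1<1$. Iterating the bound for powers of $\mathcal V$ then gives all moments uniformly in $n$. Since $r_n$ is not exactly $\sqrt{U_\alpha(Q_n)}$, I also need to control moments of $r_n$ and of the mismatch $r_n-\sqrt{U_\alpha(Q_n)}$; this mismatch is itself driven by a dissipative recursion.

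\textbf{Ingredient (ii): exponential decay of derivatives.} I need
\[
|D^j u(t,x)|\le C(1+\|x\|^{k})e^{-ct}\quad\text{for } j=1,\dots,4.
\]
I expect to obtain this for Langevin dynamics with polynomial potentials from hypocoercivity combined with the exponential ergodicity of the exact solution in weighted spaces, presumably proved earlier in the paper under Assumptions \ref{asp:V1}--\ref{asp:V0}.

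\textbf{Ingredient (iii): local weak error.} For $\psi$ with polynomially growing derivatives I need
\[
\big|\E[\psi(X_{k+1})\mid\mathscr F_{t_k}]-\E[\psi(X^{\rm ex}(\tau,X_k))]\big|\le C\tau^2\sum_{j\le4}\|D^j\psi\|_{\rm poly}\,(1+\mathcal V(X_k)^{m}).
\]
I will prove it by Taylor expanding both one-step maps. The linearly implicit steps are solved explicitly, which is done in Section \ref{Sec:construction}, and expanded in $\tau$. The factor $r_{n+1}/\sqrt{U_\alpha(Q_n)}$ should equal $1+O(\tau)$ plus a term proportional to the accumulated mismatch $r_n-\sqrt{U_\alpha(Q_n)}$, and the correction $-v\zeta_\alpha(Q_n)$ is presumably designed to balance the extra damping $-\frac v4\beta_0 r$. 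Matching first and second moments of the increments up to $O(\tau^2)$ gives the bound.

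\textbf{Main obstacle.} The mismatch is the hard point. The method is not Markov in $X_n$ alone, since $r_n$ carries memory. I would therefore run the telescoping argument on the triple $(P_n,Q_n,r_n)$. I would compare it with an extended exact system in which $r(t)=\sqrt{U_\alpha(Q(t))}$ is a conserved constraint, and define $u$ on the extended space through that constraint. The dissipative term in \eqref{eq:SAV1R} should make the deviation $e_n:=r_n-\sqrt{U_\alpha(Q_n)}$ satisfy $\E|e_n|^2\le C\tau^2$ uniformly in $n$, via a contractive recursion $e_{n+1}=(1-c\tau)e_n+O(\tau^2)+\text{martingale}$. If instead $e_n$ is only $O(\tau)$, its first-order contribution to the error must cancel in expectation or be absorbed as a local error of order $\tau^2$.

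\textbf{Conclusion.} With the three ingredients, each summand of the telescoping sum is bounded by $C\tau^2e^{-c(n-k-1)\tau}$. Summing over $k$ gives
\[
C\tau^2\sum_{k\ge0}e^{-ck\tau}\le C\tau^2\,(1-e^{-c\tau})^{-1}\le C\tau,
\]
uniformly in $n$, which proves the theorem.
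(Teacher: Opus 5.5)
Your proposal is correct and is essentially the paper's argument. It rests on the same three ingredients: Kolmogorov-equation telescoping with exponentially decaying derivatives of $u$ (Lemma \ref{lem:exp}, which the paper imports from Talay), uniform moments from the discrete Lyapunov inequality (Lemma \ref{lem:Lyapunov}), and the uniform-in-time bound $\E|r_n-\sqrt{U_\alpha(Q_n)}|^2\le C_3\tau^2$ (Lemma \ref{lem:rn-sqrt}), proved by exactly the dissipative recursion you describe, whose per-step contribution $\tau\,\E[|e_k|(1+\|X_k\|^m)]\le C\tau^2$ is absorbed as the paper absorbs $I_2$. The differences are technical: the paper telescopes along a continuous interpolation $Y^\tau$ with It\^o's formula, so it needs only $O(\tau)$ consistency of $f_n,g_n$ rather than your one-step moment matching; and your bound (iii), with only $\mathcal{V}(X_k)$ on the right, fails pathwise and must carry an extra term $C\tau|e_k|$ times a polynomial weight, as your last paragraph in effect concedes.
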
	
Our strategy for proving Theorem \ref{theo:weakorder} comprises two main components: (a) establishing the Lyapunov condition for the DSAV method \eqref{explicitmethod}-\eqref{eq:SAVN} (see Lemma \ref{lem:Lyapunov}), and (b) deriving a uniform-in-time error estimate between the numerical SAV $r_n$ and the square root $\sqrt{U_\alpha(Q_n)}$ of the numerical standardized potential (see Lemma \ref{lem:rn-sqrt}).
The introduction of the dissipative term $-\frac{v}{4}\beta_0r_{n+1}$ in \eqref{eq:SAV1R} plays a crucial role in the proofs of both Lemmas \ref{lem:Lyapunov} and \ref{lem:rn-sqrt}. This, in turn, enables the uniform-in-time weak convergence of $\{(\bar{\mathfrak{P}}_n,\bar{\mathfrak{Q}}_n)\}_{n \ge 0}$  via the Kolmogorov equation.

As an immediate result of Theorem \ref{theo:weakorder} and the ergodicity of \eqref{eq:Langevinorg} (see \eqref{eq:PQergodic}),  we further characterize the convergence rate for approximating the invariant measure $\widetilde{\mu}$ based on $\{(\bar{\mathfrak{P}}_n,\bar{\mathfrak{Q}}_n)\}_{n\ge 0}$.
\begin{cor}\label{theo:appromeasure1}
	Under the conditions of Theorem \ref{theo:weakorder},  for any $\phi\in C^\infty_{\rm p}(\mbb R^{2d};\mbb R)$, there exist  constants $K(\phi)>0$ and $\widetilde C>0$  independent of $\tau$ and $n$ such that for any $n\in\mathbb{N}^+$,
	\begin{align}\label{sec5eq1}
		\Big|\E[\phi(\bar{\mathfrak{P}}_n,\bar{\mathfrak{Q}}_n)]-\widetilde{\mu}(\phi)\Big|\le \widetilde C(e^{-K(\phi)t_n}+\tau).
	\end{align}
\end{cor}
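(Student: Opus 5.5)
The plan is a triangle inequality through the exact solution at time $t_n$, combined with exponential ergodicity of \eqref{eq:Langevinorg}. Fix $\phi\in C^\infty_{\rm p}(\mbb R^{2d};\mbb R)$ and split
\begin{align*}
\Big|\E[\phi(\bar{\mathfrak{P}}_n,\bar{\mathfrak{Q}}_n)]-\widetilde{\mu}(\phi)\Big|
&\le \Big|\E[\phi(\bar{\mathfrak{P}}_n,\bar{\mathfrak{Q}}_n)]-\E[\phi(\bar{\mathfrak P}(t_n,p,q),\bar{\mathfrak Q}(t_n,p,q))]\Big|\\
&\quad+\Big|\E[\phi(\bar{\mathfrak P}(t_n,p,q),\bar{\mathfrak Q}(t_n,p,q))]-\widetilde{\mu}(\phi)\Big|.
\end{align*}

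\textbf{First term.} Since $\phi\in C^\infty_{\rm p}$, Theorem \ref{theo:weakorder} applies with $\varphi=\phi$. It bounds this term by $C\tau$ uniformly in $n$, and $C$ does not depend on $\tau$ or $n$.

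\textbf{Second term.} Here I will invoke the exponential ergodicity of the continuous dynamics referred to as \eqref{eq:PQergodic}. Under Assumptions \ref{asp:V1} and \ref{asp:V0}, the Langevin equation has a Lyapunov function $\mathcal V$ of the form $\tfrac12\|p\|^2+V(q)+c\langle p,q\rangle+$const, a standard construction. It is also hypoelliptic and controllable, so the Harris theorem (as in Mattingly--Stuart--Higham or Talay) yields the following estimate for functions $\phi$ with $|\phi|\le C\mathcal V^k$:
\[
\big|\E[\phi(\bar{\mathfrak P}(t,p,q),\bar{\mathfrak Q}(t,p,q))]-\widetilde\mu(\phi)\big|\le C_\phi\,\mathcal V^k(p,q)\,e^{-K(\phi)t}.
\]
Because the initial datum $(p,q)$ is fixed, $C_\phi\mathcal V^k(p,q)$ is a constant independent of $\tau$ and $n$. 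A polynomially growing $\phi$ is dominated by a power of $\mathcal V$, since $V$ is a confining polynomial-type potential. The decay rate may depend on $\phi$ through the chosen power $k$, which explains the notation $K(\phi)$.

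Combining the two bounds with $\widetilde C:=\max\{C,\,C_\phi\mathcal V^k(p,q)\}$ gives \eqref{sec5eq1}.

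The only genuine obstacle is the second term: making sure that \eqref{eq:PQergodic}, as stated in the paper, covers polynomially growing test functions with an exponential rate. If it is only stated for bounded or Lipschitz test functions, I would first use moment bounds of the exact solution to dominate $\phi$ by $\mathcal V^k$. I would then apply the weighted-norm form of Harris' theorem in the space with weight $1+\mathcal V^k$.
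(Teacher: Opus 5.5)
Your proposal is correct and follows the paper's argument: triangle inequality through the exact solution at $t_n$, Theorem \ref{theo:weakorder} for the weak-error term, and the exponential ergodicity estimate \eqref{eq:PQergodic} for the remaining term. The obstacle you flag is already handled by Remark \ref{remark1}. Since $\widetilde H(p,q)\ge 1+\frac18\|p\|^2+\frac{v^2}{12}\|q\|^2$, every $\phi\in C^\infty_{\rm p}(\mbb R^{2d};\mbb R)$ is bounded by a constant multiple of $\widetilde H^l$ for some $l$, so \eqref{eq:PQergodic} applies directly with rate $k_l=:K(\phi)$ and prefactor $K_l\widetilde H(p,q)^l$, which is a constant for fixed $(p,q)$.
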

\noindent 
Corollary \ref{theo:appromeasure1} shows that to approximate $\widetilde{\mu}(\phi)$ with a precision $\varepsilon$, the necessary number of iteration steps is $t_n\tau^{-1}=\mcal O(\frac{1}{\varepsilon}\log \big(\frac{1}{\varepsilon}\big))$, which realizes a lower computational cost compared to $\mcal O\big(\frac{1}{\varepsilon}(\log \big(\frac{1}{\varepsilon}\big))^{1+l}\big)$ for some $l>0$ when applying the SSAV method to approximate $\widetilde{\mu}(\phi)$ (see \cite[Eq.\ (5.32)]{DJW25} for more details).

Our second main result gives the existence and positivity of the density function of the numerical solution $\{(\bar{\mathfrak{P}}_n,\bar{\mathfrak{Q}}_n)\}_{n\ge 0}$ for \eqref{eq:Langevinorg}.
\begin{theo}\label{thm:density}       
	Let Assumption \ref{asp:V1} hold. Then for any $n\ge 2$, the random variable $(P_{n},Q_{n})$ admits a density function, which is $Leb(\mbb R^{2d})$-a.e. positive.  Here, $Leb(\mbb R^{2d})$ denotes the Lebesgue measure on $\mbb R^{2d}$.
\end{theo}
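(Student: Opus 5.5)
The plan is to realize $(P_n,Q_n)$, conditionally on $\mathscr F_{t_{n-2}}$, as the image of the Gaussian vector $(\Delta W_{n-2},\Delta W_{n-1})\in\mbb R^{2d}$ under a $C^1$ diffeomorphism of $\mbb R^{2d}$. A change of variables then gives a conditional density that is strictly positive everywhere, and integrating out the past gives the claim. This explains why $n\ge2$ is needed: one step injects only $d$ dimensions of noise, into the $P$-component, while $Q_{n+1}$ is a deterministic function of $(P_{n+1},Q_n)$.

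\textbf{Step 1 (one-step structure).} Fix the state $(P_k,Q_k,r_k)=(a,b,c)$, assuming $U_\alpha>0$ under Assumption \ref{asp:V1} so that all quotients in \eqref{eq:SAVN} make sense. From \eqref{eq:SAV1Q},
\[
Q_{k+1}=\frac{Q_k+\tau P_{k+1}}{1+\tau v/2}.
\]
From \eqref{eq:SAV1R}, $r_{k+1}$ is an affine function of $P_{k+1}$ whose coefficients depend smoothly on $(a,b,c)$. Substituting both into \eqref{eq:SAV1P} gives
\[
\Delta W_k = A(a,b)P_{k+1}+f(a,b,c).
\]
Writing $g:=\nabla V(b)-\alpha b$, the matrix is
\[
A=\Big(1+\tfrac{\tau v}{2}+\tfrac{\tau^2(\alpha-v^2/4)}{1+\tau v/2}\Big)I+\frac{\tau^2}{2U_\alpha(b)(1+\tau v\beta_0/4)}\,gg^\top,
\]
which is symmetric positive definite because $\alpha>v^2/4$. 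Hence, for fixed $(a,b,c)$, the map $\Delta W_k\mapsto P_{k+1}$ is an affine bijection of $\mbb R^d$ (this is also the explicit solvability of Section \ref{Sec:construction}), and $\Delta W_k$ is an explicit smooth function of $(P_{k+1},a,b,c)$.

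\textbf{Step 2 (two-step inverse map).} Fix $(P_{n-2},Q_{n-2},r_{n-2})$ and let $\Phi:(\Delta W_{n-2},\Delta W_{n-1})\mapsto(P_n,Q_n)$. I will write down $\Phi^{-1}$ explicitly. Given a target $(p,q)$:
\begin{enumerate}
\item From \eqref{eq:SAV1Q} at step $n-1$, $Q_{n-1}=(1+\tau v/2)q-\tau p$.
\item From \eqref{eq:SAV1Q} at step $n-2$, $P_{n-1}=\tau^{-1}\big((1+\tau v/2)Q_{n-1}-Q_{n-2}\big)$.
\item Step 1 at time $n-2$ gives $\Delta W_{n-2}$ uniquely, and \eqref{eq:SAV1R} gives $r_{n-1}$.
\item Step 1 at time $n-1$, with state $(P_{n-1},Q_{n-1},r_{n-1})$ and $P_n=p$, gives $\Delta W_{n-1}$ uniquely.
\end{enumerate}
All of these operations are compositions of polynomial and smooth maps (smooth since $V\in C^\infty_{\rm p}$ and $U_\alpha>0$). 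So $\Phi$ is a bijection of $\mbb R^{2d}$ with $\Phi,\Phi^{-1}\in C^1$. Consequently $\det D\Phi^{-1}\neq0$ everywhere, because $D\Phi\,D\Phi^{-1}=I$.

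\textbf{Step 3 (density and positivity).} The vector $(\Delta W_{n-2},\Delta W_{n-1})$ is independent of $\mathscr F_{t_{n-2}}$ and has the everywhere positive density $\gamma$ of $\mathcal N(0,\tau I_{2d})$. By the change of variables formula, the conditional law of $(P_n,Q_n)$ given $\mathscr F_{t_{n-2}}$ has density
\[
\rho(p,q\mid P_{n-2},Q_{n-2},r_{n-2})=\gamma\big(\Phi^{-1}(p,q)\big)\,\big|\det D\Phi^{-1}(p,q)\big|>0 .
\]
By Fubini, $\rho_n(p,q):=\E\big[\rho(p,q\mid P_{n-2},Q_{n-2},r_{n-2})\big]$ is a density of $(P_n,Q_n)$. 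It is strictly positive at every point, and in particular $Leb$-a.e. positive.

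The delicate point is Step 2 done carefully. One must check joint measurability of the conditional density in the past state, so that Fubini applies. One must also make sure that the $r$-recursion does not break the bijectivity; it does not, because $r_{n-1}$ is determined by quantities already reconstructed. Finally, $U_\alpha(Q_{n-1})>0$ must hold on the whole range, so that the inverse map is globally $C^1$. If positivity of $U_\alpha$ is only available for a suitable choice of $C_V$, I would invoke Assumption \ref{asp:V1} there.
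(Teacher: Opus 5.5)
Your proposal is correct and follows essentially the same route as the paper: for a fixed state at step $n-2$, reconstruct $(\Delta W_{n-2},\Delta W_{n-1})$ uniquely and smoothly from $(P_n,Q_n)$, change variables against the Gaussian density, and average over the past state. The differences are only in execution: you get $\det D\Phi\neq0$ from the smoothness of $\Phi^{-1}$ rather than from the paper's block-determinant and eigenvalue computation for $G_3^n$ (your $A$ is exactly $(G_3^n)^{-1}$), and you read strict positivity directly off the density formula, which is more direct than the paper's argument via positive mass of open sets plus outer regularity (charging every open set does not by itself force a.e.\ positivity, whereas your formula gives positivity everywhere).
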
 
It is well known that the underdamped Langevin equation \eqref{eq:Langevinorg} is a typical stochastic dynamical system satisfying the parabolic H\"ormander condition \cite{Pav14}, whose exact solution $(\bar{\mathfrak{P}}(t), \bar{\mathfrak{Q}}(t))$, $t>0$,   admits a smooth and strictly positive density function. For the regularity of the density function of numerical solutions, it was shown in \cite{CHS19} that the splitting averaged vector field method for (1.1) admits a smooth density. The analysis therein relies on Malliavin calculus, a fundamental tool for studying the regularity of density functions of random variables (see, e.g., \cite{Nualart2006} and the references therein). We would like to mention that \cite{CHS19} also established the convergence rate of the density function of the numerical solution.  In contrast, the numerical scheme \eqref{eq:SAVN} proposed in this paper is constructed for the extended system \eqref{eq:SAV1}, which does not satisfy the parabolic H\"ormander condition (see Remark \ref{rem:exact-PDF}). Consequently, our proof of Theorem 1.4 follows a different approach, based on the invertibility of the diffusion coefficient together with a change-of-variables argument. A combination of \eqref{sec5eq1} and Theorem \ref{thm:density} further shows that the density of $(\bar{\mathfrak{P}}_n,\bar{\mathfrak{Q}}_n)$ provides a consistent approximation to $\widetilde{\rho}$, the density of the invariant measure $\widetilde{\mu}$; see Corollary \ref{cor:appromeasure2}.

With independent interest, we also study the exponential integrability of both the exact solution to \eqref{eq:Langevinorg} and the associated numerical solution to the DSAV method \eqref{explicitmethod}-\eqref{eq:SAVN}, which plays a crucial role in the analysis of strong convergence for the numerical solution to (1.1) (see, e.g., \cite{CDHZ25,CHS19}). Compared with existing results, we establish uniform-in-time upper bounds for the exponential moments of both the exact solution and the numerical solution in Lemmas \ref{expintergal} and \ref{Numer-expinter}, respectively. These bounds have potential applications to sharpen the error constants in strong convergence estimates for numerical schemes, as well as to improve tail probability estimates for both the exact and numerical solutions.  The uniform-in-time exponential integrability also implies the long-time moment stability of the DSAV method; this moment stability property is illustrated by the numerical experiments in Section \ref{Sec:num-exp} (see Figure \ref{F:moment}).

\subsection{Contributions} 
For the sake of clarity, we summarize the main contributions of this work as follows:
\begin{itemize}
	\item We develop a novel explicit numerical method, called the DSAV method, for the long-time simulation of \eqref{eq:Langevinorg}. The proposed method not only builds upon the existing SAV technique (see, e.g., \cite{DJW25}), but also integrates new design mechanisms to guarantee the Lyapunov condition \eqref{eq:FLC}. 
	
	\item To the best of our knowledge, this work provides the first analysis of uniform-in-time exponential integrability and uniform-in-time weak convergence for explicit numerical methods applied to \eqref{eq:Langevinorg} with polynomially growing potentials.

	\item We establish the existence and positivity of the density function of the numerical solution of $\{(\bar{\mathfrak{P}}_n,\bar{\mathfrak{Q}}_n)\}_{n\ge 0}$ based on a new argument which avoids reliance on the Malliavin Calculus. 
\end{itemize}

\emph{Organization.}
The paper is organized as follows. The detailed construction idea of the DSAV method is illustrated in Section \ref{Sec:construction}.
Section \ref{Sec:extended-system} investigates the exponential integrability and ergodicity of the extended system \eqref{eq:SAV1}. Section \ref{Sec:weakconvergence} and Section \ref{Sec:aapromeasure} are devoted to the proofs of Theorem \ref{theo:weakorder} and Theorem \ref{thm:density}, respectively. Numerical experiments are reported in Section \ref{Sec:num-exp} to validate the theoretical results. Finally, Section \ref{Sec:conclusion} concludes the paper with a summary and further remarks.

\section{Construction  procedure of DSAV method} \label{Sec:construction}
The construction of the DSAV method for \eqref{eq:Langevinorg} is inspired by the SAV technique, but incorporating a key design to guarantee the Lyapunov condition \eqref{eq:FLC} and achieve the uniform-in-time weak convergence. The SAV method, first proposed by \cite{ShenJCP,Shen18}, is suitable for solving deterministic gradient flows, enjoying the advantages of being unconditionally energy stable and computationally efficient as a linear-implicit scheme.  In the past several years, this method has been successfully adapted to some stochastic partial differential equations such as  stochastic  Allen--Cahn equations \cite{Metzger25,MetzgerESAIM}. Recently, \cite{DJW25} proposed a SSAV scheme for \eqref{eq:Langevinorg} by constructing a new SAV different from the existing literature, which attains the exponential  integrability in the finite time horizon and thus recovers the strong convergence rate.
However, the SAV method proposed in \cite{DJW25} does not appear to satisfy the Lyapunov condition, thereby hindering  the uniform-in-time weak convergence. 
To overcome this difficulty, we introduce an extra dissipative term on the SAV by leveraging the polynomial algebraic structure of the potential $V$. Next, we elaborate the construction idea of our proposed method.

First, by introducing the new variable $\mathfrak{P}(t)= \bar{\mathfrak{P}}(t)+\frac{v}{2} \bar{\mathfrak{Q}}(t)$ and denoting $\mathfrak{Q}(t)=\bar{\mathfrak{Q}}(t)$, we reformulate \eqref{eq:Langevinorg} into
\begin{equation}\label{eq:Langevin}
	\left\{
	\begin{split}
		\ud \mathfrak{P}(t)&=-\nabla V(\mathfrak{Q}(t))\ud t-\frac{v}{2}{}\mathfrak{P}(t)\ud t+\frac{v^2}{4} \mathfrak{Q}(t)\ud t+ \ud W(t),\\
		\ud \mathfrak{Q}(t)&=\mathfrak{P}(t)\ud t-\frac{v}{2}\mathfrak{Q}(t)\ud t.
	\end{split}
	\right.
\end{equation}
We point out that, although this transformation formally introduces a dissipative term for the position variable $\mathfrak{Q}$, it is not essential for the construction of the numerical method and serves primarily to simplify the notation.

Then inspired by \cite{DJW25}, we pose the following condition.
\begin{con}\label{condition1}
	There exist constants $\alpha>\frac{v^2}{4}$ and $C_V\ge 0$ such that 
	\begin{align}\label{Ualpha}
		U_\alpha(q):=V(q)-\frac{\alpha}{2}\|q\|^2+C_{V}\ge1,\quad q\in\R^d.
	\end{align}
\end{con}
Further, we introduce the SAV $r(t) := \sqrt{U_\alpha(\mathfrak{Q}(t))}$  to deal with the polynomial nonlinearity $\nabla V$ explicitly, and we call $U_\alpha$ the standardized potential. As stated in \cite[Remark 2.5]{DJW25}, $r(t)$ incorporates an additional term $-\frac{\alpha}{2}\|q\|^2$
compared with usual SAV like $\sqrt{ V(\mathfrak{Q}(t))+C_V}$, to ensure the exponential integrability and strong convergence rate in the finite time horizon. 
By adding the SAV $r(t)$,  \eqref{eq:Langevin} is turned into 
\begin{equation}\label{SAV-wang}
	\begin{split}
		\ud \mathfrak P(t)&=\frac{r(t)}{\sqrt{U_\alpha(\mathfrak Q(t))}}(-\nabla V(\mathfrak Q(t))+\alpha \mathfrak Q(t))\ud t-\frac{v}{2}\mathfrak P(t)\ud t+(\frac{v^2}{4} \!-\alpha)\mathfrak Q(t)\ud t+\ud W(t),\\
		\ud \mathfrak Q(t)&=\mathfrak P(t)\ud t-\frac{v}{2} \mathfrak Q(t)\ud t,\\
		\ud r(t)&=\frac{1}{2\sqrt{U_\alpha(\mathfrak Q(t))}}\langle\nabla V(\mathfrak Q(t))-\alpha \mathfrak Q(t),\mathfrak P(t)-\frac{v}{2}\mathfrak Q(t)\rangle\ud t.
	\end{split}
\end{equation}

Remarkably, it will be considerably difficult to obtain a uniform-in-time weakly convergent method, if one  considers  a direct discretization to \eqref{SAV-wang}.  This issue arises from the lack of manifest dissipativity for $r(t)$ in \eqref{SAV-wang}. To address this issue, we note the following algebraic structure shared by many polynomial potentials, including the double-well potential $V(q) = \frac{1}{4}\|q\|^4 - \frac{1}{2}\|q\|^2,~q \in \mathbb{R}^d$.
\begin{con}\label{condition2}
	There exist $\beta_0>0$ and a function $V_1:\mbb R^d\to\mbb R$ such that
	\begin{equation}\label{qVq}
		\langle q, \nabla V(q)\rangle= \beta_0V(q)+V_1(q),\quad q\in\mbb R^d.
	\end{equation}
\end{con}
Employing  Condition \ref{condition2}, $V(q)=U_\alpha(q)+\frac{\alpha}{2}\|q\|^2-C_V$, and $r(t)=\sqrt{U_\alpha(\mathfrak{Q}(t))}$, we arrive at
\begin{align*}
	\frac{\ud}{\ud t} r(t)
	&=\frac{1}{2\sqrt{U_\alpha(\mathfrak{Q}(t))}}\langle\nabla V(\mathfrak{Q}(t))-\alpha \mathfrak{Q}(t),\mathfrak{P}(t)\rangle-\frac{v}{4}\beta_0r(t)\\
	&\quad-\frac{v}{4\sqrt{U_\alpha(\mathfrak{Q}(t))}}[-C_{V}\beta_0+V_1(\mathfrak{Q}(t))-\alpha(1-\frac{1}{2}\beta_0)\|\mathfrak{Q}(t)\|^2].
\end{align*}
Plugging the above formula into \eqref{SAV-wang},  we have that $\big\{\big(\mathfrak{P}(t),\mathfrak{Q}(t),r(t)\big)\big\}_{t\ge 0}$ solves the following extended system:

\begin{equation}\label{eq:SAV1}
	\!\!\!\!	\left\{
	\begin{split}
		\ud P(t)&=\frac{r(t)}{\sqrt{U_\alpha(Q(t))}}(-\nabla V(Q(t))+\alpha Q(t))\ud t-\frac{v}{2}P(t)\ud t+(\frac{v^2}{4} \!-\alpha)Q(t)\ud t+\ud W(t),\\
		\ud Q(t)&=P(t)\ud t-\frac{v}{2} Q(t)\ud t,\\
		\ud r(t)&=\frac{1}{2\sqrt{U_\alpha(Q(t))}}\langle\nabla V(Q(t))-\alpha Q(t),P(t)\rangle\ud t-\frac{v}{4}\beta_0r(t)\ud t-v\zeta_\alpha(Q(t))\ud t,
	\end{split}
	\right.
\end{equation}
where $\zeta_\alpha$ is defined by
\begin{align}\label{eq:zetaalpha}
	\zeta_\alpha(q):=\frac{1}{4\sqrt{U_\alpha(q)}}[-C_{V}\beta_0+V_1(q)-\alpha(1-\frac{1}{2}\beta_0)\|q\|^2],~q\in\R^d.
\end{align}
Applying the linearly implicit scheme to \eqref{eq:SAV1} leads to \eqref{eq:SAVN}, which together with \eqref{explicitmethod} completes the construction of the DSAV method for \eqref{eq:Langevinorg}.

We close this section by presenting the explicit iterative formulation of the DSAV method \eqref{explicitmethod}-\eqref{eq:SAVN}.
Indeed, by \eqref{eq:SAV1Q},
\begin{equation}\label{eq:Qn+1}
	Q_{n+1}=\left(1+\frac{v}{2}\tau\right)^{-1}(Q_n+\tau P_{n+1}).
\end{equation}
Inserting \eqref{eq:Qn+1} into \eqref{eq:SAV1P} yields
\begin{align}\label{eq:Pn+1}
	P_{n+1}=f_{v,\tau}P_n+\tau f_{v,\tau}&\Big[\frac{r_{n+1}}{\sqrt{U_\alpha(Q_n)}}\left(-\nabla V(Q_n)+\alpha Q_n\right)\\\notag
	&\quad+(\frac{v^2}{4} -\alpha)(1+\frac{v}{2}\tau)^{-1}Q_n+\tau^{-1} \Delta W_n\Big],
\end{align}
where $f_{v,\tau}:=(1+\frac{v}{2}\tau)(1+v\tau+\alpha\tau^2)^{-1}$. 
Plugging \eqref{eq:Pn+1} into \eqref{eq:SAV1R}, we have that $r^{n+1}$ is solved explicitly via
\begin{align}\label{eq:rn+1-old}
	r_{n+1}&=G_0(Q_n)^{-1}r_n+G_0(Q_n)^{-1}G_1(Q_n,P_n)+G_0(Q_n)^{-1}\langle \tau G_2(Q_n),\Delta W_n\rangle.
\end{align}
where
\begin{gather}
	G_0(q):=1+\frac{v}{4}\tau\beta_0+\frac{\tau^2f_{v,\tau}}{2U_\alpha(q)} \|\nabla V(q)-\alpha q\|^2,\label{eq:G0}\\\notag
	G_1(q,p):=\frac{\tau}{2\sqrt{U_\alpha(q)}}\big\langle\nabla V(q)-\alpha q,f_{v,\tau}p+\tau f_{v,\tau}(\frac{v^2}{4} -\alpha)(1+\frac{v}{2}\tau)^{-1}q\big\rangle-v\tau\zeta_\alpha(q),
	\\
	G_2(q):=\frac{ f_{v,\tau}}{2\sqrt{U_\alpha(q)}}(\nabla V(q)-\alpha q).\label{eq:G2}
\end{gather}
Then $P_{n+1}$ and $Q_{n+1}$ can be solved from \eqref{eq:Pn+1} and \eqref{eq:Qn+1}, respectively.

\section{Exponential integrability and ergodicity of extended system} \label{Sec:extended-system}
In this section, we present the uniform-in-time exponential integrability and ergodicity of the extended system \eqref{eq:SAV1}. 

We begin with some notation. Denote by $\|\cdot\|$ the trace norm of a vector or matrix, i.e., $\|A\|=\sqrt{\mathrm{tr}(A^\top A)}$. Let $\LL\cdot,\cdot\RR$ denote the scalar product of two vectors. Let $\mscr B_b(\mbb R^{d};\mbb R)$ stand for the set of bounded measurable functions from $\mbb R^{d}$ to $\mbb R$. Denote by $ C_b(\mathbb R^d;\mbb R^m)$ (resp.\ $ C^{k}(\mathbb R^d;\mbb R^m)$) the space of bounded and continuous (resp.\ $k$th continuously differentiable) functions from $\mathbb R^d$ to $\mbb R^m$. 
For $f\in C^k(\mathbb R^d;\mbb R)$, denote by $ D^k f(x)(\xi_1,\ldots,\xi_k)$ the $k$th order G\^ ateaux derivative along the directions $\xi_1,\ldots,\xi_k\,\in\mathbb R^d$. 
Let $\mathbf F$ be the set of functions with polynomial growth at infinity, i.e., a function $f\in\mathbf F$ means that there exist $C>0$ and $l>0$ such that for any $x\in\mathbb R^d$, $|f(x)|\le C(1+\|x\|^l)$. Then denote by $C^\infty_{\rm p}(\mbb R^d;\mbb R)$ the set of functions $f\in C^\infty(\mbb R^d;\mbb R)$ whose all partial derivatives belong to $\mbf F$.
We assume that the Brownian motion $W$ is defined on a complete filtered probaility space $\big(\Omega,\mcal F,\{\mcal F_t\}_{t\ge 0},\mbb P\big)$. 
Throughout this paper, let $K(a_1,a_2,...,a_m)$ be a generic constant dependent on parameters $a_1,a_2,...,a_m$ but independent of the stepsize $\tau$ and the step number $n$, which may vary for each appearance.

\subsection{Exponential integrability}
To ensure the well-posedness of the extended system \eqref{eq:SAV1}, we introduce the following assumption. 
\begin{assum}\label{asp:V1}
	Let Conditions \ref{condition1} and \ref{condition2} hold. Moreover, assume that $\zeta_\alpha$ defined in \eqref{eq:zetaalpha} satisfies that for any $q\in\R^d$,
		$|\zeta_\alpha(q)|\le L_1(\|q\|^{\beta_1}+1)$, 
	where	$\beta_1\in(0,1)$ and $L_1>0$ are two constants.
\end{assum}
\noindent Under Assumption \ref{asp:V1}, for any $\epsilon_0>0$,
\begin{equation}\label{eq:zeta}
	\zeta^2_\alpha(q)\le \epsilon_0\|q\|^2+K(\epsilon_0,L_1,\beta_1),\quad q\in\mbb R^d.
\end{equation}
\vspace{-3mm}
\begin{rem}\label{rem:example}
	We remark that the condition $V\ge 0$ in Assumption \ref{asp:V0} can be replaced by the one that $V$ is bounded below. In addition, Assumptions \ref{asp:V1} and \ref{asp:V0}  are fulfilled by a large class of potential functions $V$, which are essentially of polynomial growth. An example is
	$V(q)=a_0\|q\|^{2l}+F(q)+G(q),~ q\in\mbb R^{d},$ including the double-well potential, where $l\ge 2$ is a positive integer, $a_0>0$, $F$ is a  polynomial of degree at most $l$, and $G\in C^{\infty}(\mbb R^d;\mbb R)$ is any function of linear growth.
\end{rem}

For the extended system \eqref{eq:SAV1}, we introduce the Lyapunov function
\begin{align}
	H(p,q,r)=\frac12\|p\|^2+\frac12(\alpha-\frac{v^2}{4})\|q\|^2+r^2,\quad p,q\in\R^d, r\in\R.
\end{align}
\noindent Denote by $\mathcal{L}_{\textup{ext}}$ the infinitesimal generator of \eqref{eq:SAV1}, i.e.,
\begin{align*}
	&\;\mathcal{L}_{\textup{ext}}g(p,q,r)\\
	=&\;\nabla_p g(p,q,r)^\top\Big[\frac{r}{\sqrt{U_\alpha(q)}}(-\nabla V(q)+\alpha q)-\frac{v}{2}p+(\frac{v^2}{4}-\alpha)q\Big]+\nabla_q g(p,q,r)^\top(p-\frac{v}{2}q)\\
	&\;+\frac{\partial g(p,q,r)}{\partial r}\Big[\frac{1}{2\sqrt{U_\alpha(q)}}\langle \nabla V(q)-\alpha q,p\rangle-\frac{v}{4}\beta_0r-v\zeta_\alpha(q)\Big]+\frac{1}{2}\textup{tr}\big(\nabla^2_{pp}g(p,q,r)\big)
\end{align*}
for any $g\in C^2(\mbb R^d\times\mbb R^d\times\mbb R;\mbb R)$,
where $\nabla^2_{pp}g(p,q,r)=\big(\frac{\PD^2 g(p,q,r)}{\PD p_i\PD p_j}\big)_{i,j=1,\ldots,d}$ is the Hessian matrix of $g$ with respect to $p$. 
Then by Young's inequality and \eqref{eq:zeta}, for $\beta_0^{\prime}:=\min\{\beta_0,2\}$,
\begin{align*}
	\mathcal{L}_{\textup{ext}}H(p,q,r)&=-\frac{v}{2}\left(\|p\|^2+\beta_0r^2+(\alpha-\frac{v^2}{4})\|q\|^2\right)-2rv\zeta_\alpha(q)+\frac{d}{2}\\
	&\le -\frac{v}{2}\beta_0^{\prime}H(p,q,r)+\frac{\beta_0^{\prime}}{4}vr^2+v\frac{4}{\beta_0^{\prime}}\left(\epsilon_0\|q\|^2+K(\epsilon_0,L_1,\beta_1)\right)+\frac{d}{2}.
\end{align*}
Choosing $0<\epsilon_0\ll 1$ such that for any $q\in\R^d$,
$\frac{4}{\beta_0^{\prime}}\epsilon_0\|q\|^2\le \frac{1}{8}\beta_0^{\prime}(\alpha-\frac{v^2}{4})\|q\|^2\le \frac{1}{4}\beta_0^{\prime}H(p,q,r)-\frac{\beta_0^{\prime}}{4}r^2.$
As a result, there exists a positive constant $C_0$ such that for any $p,q\in\R^{d}$ and $r\in\R$,
\begin{equation}\label{eq:Lext}
	\mathcal{L}_{\textup{ext}}H(p,q,r)
	\le -\frac{v}{4}\beta_0^{\prime}H(p,q,r)+C_0,
\end{equation}
which implies that $\mathcal{L}_{\textup{ext}}(H+1)\le C_0 (H+1)$. It follows from \cite[Theorem 10.2]{YZH17} that
the extended system \eqref{eq:SAV1} admits a unique strong solution for any non-random initial value $(P(0),Q(0),r(0))$. 

Let $(P(t,p,q,r_0),Q(t,p,q,r_0),r(t,p,q,r_0))$ (resp.\ $(\mathfrak{P}(t,p,q),\mathfrak{Q}(t,p,q))$) denote \\ the solution of \eqref{eq:SAV1} (resp.\ \eqref{eq:Langevin}) with the initial value $(p,q,r_0)$ (resp.\ $(p,q)$). The following fact further demonstrates the link between \eqref{eq:Langevin} and \eqref{eq:SAV1}.
\begin{lem}\label{equivalent}
	Let Assumption \ref{asp:V1} hold. If a process $\big\{\big(P(t),Q(t),r(t)\big)\big\}_{t\ge 0}$ solves \eqref{eq:SAV1} with the initial value $(p,q,r_0)$ satisfying $r_0=\sqrt{U_\alpha(q)}$, then for any $t\ge 0$, it holds that $r(t)=\sqrt{U_\alpha(Q(t))}$, and that
	$\{( P(t), Q(t))\}_{t\ge 0}$ solves \eqref{eq:Langevin}. Thus, 
	$$P(t,p,q,\sqrt{U_\alpha(q)})=\mathfrak{P}(t,p,q),~Q(t,p,q,\sqrt{U_\alpha(q)})=\mathfrak{Q}(t,p,q),\quad (p,q)\in\mbb R^{2d}.$$
\end{lem}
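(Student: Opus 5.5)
Our plan is to show that the process $e(t):=r(t)-\sqrt{U_\alpha(Q(t))}$ satisfies a linear homogeneous ODE with zero initial datum, so that it vanishes identically. Since $Q$ has no diffusion term in \eqref{eq:SAV1}, it is a $C^1$ path. Moreover, $U_\alpha\ge 1$ by Condition \ref{condition1}, so $t\mapsto \sqrt{U_\alpha(Q(t))}$ is $C^1$ as well, and the ordinary chain rule gives
\begin{align*}
\frac{\ud}{\ud t}\sqrt{U_\alpha(Q(t))}=\frac{1}{2\sqrt{U_\alpha(Q(t))}}\big\langle \nabla V(Q(t))-\alpha Q(t),P(t)-\tfrac{v}{2}Q(t)\big\rangle .
\end{align*}
Next, we rewrite the term $-\frac{v}{4\sqrt{U_\alpha}}\langle\nabla V(Q)-\alpha Q,Q\rangle$ using Condition \ref{condition2} together with $V=U_\alpha+\frac{\alpha}{2}\|Q\|^2-C_V$, exactly as in the computation preceding \eqref{eq:SAV1}. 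This turns it into $-\frac{v}{4}\beta_0\sqrt{U_\alpha(Q(t))}-v\zeta_\alpha(Q(t))$. Hence $s(t):=\sqrt{U_\alpha(Q(t))}$ satisfies
\begin{align*}
\frac{\ud}{\ud t}s(t)=\frac{1}{2\sqrt{U_\alpha(Q(t))}}\langle\nabla V(Q(t))-\alpha Q(t),P(t)\rangle-\frac{v}{4}\beta_0 s(t)-v\zeta_\alpha(Q(t)).
\end{align*}

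The $r$-equation in \eqref{eq:SAV1} is also pathwise an ODE, since it has no noise. Subtracting the two equations, the terms involving $P$ and $\zeta_\alpha$ cancel, and we obtain $\frac{\ud}{\ud t}e(t)=-\frac{v}{4}\beta_0 e(t)$ with $e(0)=r_0-\sqrt{U_\alpha(q)}=0$. Therefore $e(t)=e(0)e^{-v\beta_0 t/4}=0$ for all $t\ge0$, almost surely.

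Substituting $r(t)=\sqrt{U_\alpha(Q(t))}$ into the $P$-equation of \eqref{eq:SAV1}, the ratio $r/\sqrt{U_\alpha}$ becomes $1$. The drift then reduces to $-\nabla V(Q)+\alpha Q-\frac v2 P+(\frac{v^2}{4}-\alpha)Q=-\nabla V(Q)-\frac v2P+\frac{v^2}{4}Q$, which is exactly \eqref{eq:Langevin}. The $Q$-equation is unchanged. So $(P,Q)$ solves \eqref{eq:Langevin} with initial value $(p,q)$.

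The identification $P(t,p,q,\sqrt{U_\alpha(q)})=\mathfrak P(t,p,q)$, and likewise for $Q$, then follows from pathwise uniqueness of strong solutions to \eqref{eq:Langevin}. This uniqueness holds because the coefficients are locally Lipschitz ($V$ is smooth) and there is a Lyapunov function, for instance one built from $\frac12\|\bar{\mathfrak P}\|^2+V$ shifted to be positive, which rules out explosion. Alternatively, we can appeal to the well-posedness of \eqref{eq:SAV1} obtained via \eqref{eq:Lext}, since any solution of \eqref{eq:Langevin} together with $\sqrt{U_\alpha(\mathfrak Q)}$ solves \eqref{eq:SAV1}.

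The only mildly delicate step is justifying the chain rule. It is legitimate because $Q$ is absolutely continuous and $U_\alpha\ge1$ keeps the square root smooth. Everything else is algebra that reproduces the derivation of \eqref{eq:SAV1}.
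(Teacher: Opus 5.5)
Your proof is correct, but it uses uniqueness in the opposite direction from the paper. The paper's argument is two lines. The chain-rule computation preceding \eqref{eq:SAV1} shows that the lifted process $(\mathfrak P,\mathfrak Q,\sqrt{U_\alpha(\mathfrak Q)})$ solves \eqref{eq:SAV1}. Uniqueness of strong solutions of \eqref{eq:SAV1}, obtained from \eqref{eq:Lext} and the cited well-posedness theorem, then forces any solution starting on $\mathcal M$ to coincide with this lift.

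You instead start from an arbitrary solution of \eqref{eq:SAV1} and show that the defect $e(t)=r(t)-\sqrt{U_\alpha(Q(t))}$ solves $\dot e=-\frac v4\beta_0 e$ pathwise. This proves the invariance of $\mathcal M$ with no appeal to uniqueness for the extended system. You need uniqueness only for \eqref{eq:Langevin}, in the final identification. There, local Lipschitz continuity of the smooth drift plus localization already gives uniqueness among global solutions; the Lyapunov function is needed only for existence of $(\mathfrak P,\mathfrak Q)$, which the paper takes for granted.

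Your route also gives more. For arbitrary $r_0$ it yields $r(t)-\sqrt{U_\alpha(Q(t))}=(r_0-\sqrt{U_\alpha(q)})e^{-v\beta_0 t/4}$, so $\mathcal M$ is exponentially attracting. This is the continuous-time counterpart of Lemma \ref{lem:rn-sqrt}, and it shows directly why the dissipative term $-\frac v4\beta_0 r$ is there. The paper's route is shorter once well-posedness of \eqref{eq:SAV1} is available, but it hides this structural fact.
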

\begin{proof}
	Owing to the derivation of \eqref{eq:SAV1}, it follows that
	$\{(\mathfrak{P}(t,p,q),\mathfrak{Q}(t,p,q),\\\sqrt{U_\alpha(\mathfrak{Q}(t,p,q))})\}_{t\ge0}$ solves \eqref{eq:SAV1}. The uniqueness of the strong solution of \eqref{eq:SAV1} gives that $(P(t,p,q,r_0),Q(t,p,q,r_0),r(t,p,q,r_0))=(\mathfrak{P}(t,p,q),\mathfrak{Q}(t,p,q),\sqrt{U_\alpha(\mathfrak{Q}(t,p,q)})$ \\for any $t\ge 0$, provided that $r_0=\sqrt{U_\alpha(q)}$. 
	Thus, the proof is complete.
\end{proof}

The following lemma states that the solution to the extended system \eqref{eq:SAV1} possesses the uniform-in-time exponential integrability. The Lyapunov structure \eqref{eq:Lext} plays a key role in its proof. Hereafter, we always assume that the initial values of \eqref{eq:SAV1} and \eqref{eq:SAVN} are non-random for simplicity.
\begin{lem}\label{expintergal}
	Let Assumption \ref{asp:V1} hold. Then for any $t\ge 0$,
	\begin{align*}
		\E\left[\exp\left(\kappa_1 H(P(t),Q(t),r(t))\right)\right]\le \exp\left(\kappa_1 H(P(0),Q(0),r(0))\right)+2,
	\end{align*}
	where $\kappa_1:=v\beta_0'\min\{\frac{1}{8},\frac{1}{16C_0}\}>0$ with $\beta'_0=\min\{\beta_0,2\}$ and $C_0$ given in \eqref{eq:Lext}.
\end{lem}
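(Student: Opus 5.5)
We apply Itô's formula to $\exp(\kappa_1 H(P(t),Q(t),r(t)))$ and exploit the Lyapunov inequality \eqref{eq:Lext}. Write $H_t:=H(P(t),Q(t),r(t))$ and $F(p,q,r):=\exp(\kappa_1 H(p,q,r))$. Since the noise acts only on $P$, with identity diffusion, we have
\begin{align*}
\mathcal{L}_{\textup{ext}}F=\kappa_1F\Big(\mathcal{L}_{\textup{ext}}H+\frac{\kappa_1}{2}\|\nabla_pH\|^2\Big)=\kappa_1F\Big(\mathcal{L}_{\textup{ext}}H+\frac{\kappa_1}{2}\|p\|^2\Big).
\end{align*}
We bound $\|p\|^2\le 2H$ and use \eqref{eq:Lext}. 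This gives
\begin{align*}
\mathcal{L}_{\textup{ext}}F\le \kappa_1F\Big(-\frac{v}{4}\beta_0'H+\kappa_1H+C_0\Big)\le \kappa_1F\Big(-\frac{v}{8}\beta_0'H+C_0\Big),
\end{align*}
because $\kappa_1\le \frac{v\beta_0'}{8}$ by the definition of $\kappa_1$.

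The next step is to convert this into a linear differential inequality. We split into two regions. On $\{\frac{v}{16}\beta_0'H\ge C_0\}$ we obtain $\mathcal{L}_{\textup{ext}}F\le -\kappa_1\frac{v}{16}\beta_0'HF\le -\kappa_1C_0F$. On the complement, $H<16C_0/(v\beta_0')$, so $F\le \exp(16\kappa_1C_0/(v\beta_0'))\le e$, since $\kappa_1\le \frac{v\beta_0'}{16C_0}$. There $\mathcal{L}_{\textup{ext}}F\le \kappa_1C_0F\le \kappa_1C_0 e$. Combining both regions (writing $-\lambda F$ everywhere and adding back $2\lambda F$ on the bounded set) yields
\begin{align*}
\mathcal{L}_{\textup{ext}}F\le -\lambda F+2\lambda e,\qquad \lambda:=\kappa_1C_0.
\end{align*}
This is the step where the constants must be tracked carefully. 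A sharper split would use $F\le e^{1/2}$ on a region chosen with a slightly different threshold, ensuring the additive constant equals exactly $2\lambda$ rather than $2\lambda e$, so that the stated bound ``$+2$'' appears. Concretely, I would choose the threshold $H\le \frac{1}{2\kappa_1}\log 2$-type region or adjust the split constant so that $F\le 2$ there. I expect this bookkeeping to be the main, albeit minor, obstacle. The target inequality is $\mathcal{L}_{\textup{ext}}F\le -\lambda F+2\lambda$ for some $\lambda>0$.

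Finally we apply Itô's formula to $e^{\lambda t}F$, localized with stopping times $\tau_R=\inf\{t:H_t\ge R\}$ so that the stochastic integral is a true martingale. Taking expectations gives
\begin{align*}
\E[e^{\lambda (t\wedge\tau_R)}F_{t\wedge\tau_R}]\le F(0)+2(e^{\lambda t}-1).
\end{align*}
Letting $R\to\infty$ uses Fatou's lemma, with $\tau_R\to\infty$ a.s. by the non-explosion already established via \cite[Theorem 10.2]{YZH17}. Multiplying by $e^{-\lambda t}$ then gives $\E[F_t]\le e^{-\lambda t}F(0)+2\le F(0)+2$, which is the claim.
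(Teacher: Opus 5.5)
Your generator identity, the reduction to $\mathcal{L}_{\textup{ext}}F\le\kappa_1F(-\frac{v}{8}\beta_0'H+C_0)$, and the localization/Fatou conclusion all coincide with the paper's argument. The gap is in the linearization step, which is exactly where the constant $2$ comes from.

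As written, your region split gives $\mathcal{L}_{\textup{ext}}F\le-\lambda F+2e\lambda$, hence only $\E[F_t]\le F(0)+2e$. The repair you suggest, moving the threshold so that $F\le 2$ on the bounded set, cannot work while you drop the negative term there. Write $x=\kappa_1H\ge0$, $a=\frac{v}{8}\beta_0'$, $b=\kappa_1C_0$. If on the complement you require $\mathcal{L}_{\textup{ext}}F\le-\lambda F$, the threshold must satisfy $x_0\ge(b+\lambda)/a$. If on the bounded set you estimate $\mathcal{L}_{\textup{ext}}F+\lambda F$ by $(b+\lambda)\sup F$, the additive constant is at least $e^{(b+\lambda)/a}(b+\lambda)$. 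So the final constant is at least $e^{(b+\lambda)/a}(1+b/\lambda)$. Evaluating \eqref{eq:Lext} at the origin gives $C_0\ge d/2\ge\frac12$, so $\kappa_1=\frac{v\beta_0'}{16C_0}$ and $b=a/2$ exactly. Then $\min_{\lambda>0}e^{(b+\lambda)/a}(1+b/\lambda)=2e$, attained at $\lambda=a/2$, which is your choice. Hence no split of this type reaches $+2$ with the prescribed $\kappa_1$.

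The missing idea is to keep the term $-a\,xe^{x}$ on the whole space instead of discarding it. The paper uses $xe^x\ge e^x-1$ with $x=\kappa_1H$. This turns the bound into $\mathcal{L}_{\textup{ext}}F\le(\kappa_1C_0-\frac{v}{8}\beta_0')F+\frac{v}{8}\beta_0'\le-\frac{v\beta_0'}{16}F+\frac{v\beta_0'}{8}$, using $\kappa_1C_0\le\frac{v\beta_0'}{16}$. That is your target inequality with $\lambda=\frac{v\beta_0'}{16}$, and your It\^o/Fatou step then yields exactly $F(0)+2$.

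Alternatively, you can keep your $\lambda=\kappa_1C_0$. Set $g(x):=e^x(-ax+2\kappa_1C_0)$; then $g'(x)=e^x(-ax+2\kappa_1C_0-a)\le0$ on $[0,\infty)$ because $2\kappa_1C_0\le a$. Hence $\mathcal{L}_{\textup{ext}}F+\lambda F\le g(\kappa_1H)\le g(0)=2\lambda$, with no splitting at all.
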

\begin{proof}
	Since $0<\kappa_1\le \frac{v}{8}\beta_0^{\prime}$ and $H(p,q,r)\ge \frac{1}{2}\|p\|^2$, we deduce from \eqref{eq:Lext} that
	\begin{align}
		\kappa_1\mathcal{L}_{\textup{ext}} H(p,q,r)+\frac{\kappa_1^2}{2}\| p\|^2\le 
		-\frac{v}{8}\beta_0^{\prime}\kappa_1H(p,q,r)+\kappa_1 C_0. \label{sec3eq3}
	\end{align}
	By
	$\nabla^2_{pp}e^{\kappa_1H(p,q,r)}=
	\kappa_1	e^{\kappa_1H(p,q,r)}(\kappa_1\nabla_p H(p,q,r)\nabla_p H(p,q,r)^\top +\nabla^2_{pp}H(p,q,r))$
	and
	It\^o's formula, and denoting $X(t):=(P(t),Q(t),r(t))$, we have that for any $\lambda\in\mbb R$,
	\begin{align*}
		\ud e^{\lambda t +\kappa_1 H(X(t))}&=e^{\lambda t+\kappa_1 H(X(t))}\Big[\kappa_1\mathcal{L}_{\textup{ext}} H(X(t)))+\frac{\kappa_1^2}{2}\| P(t)\|^2+\lambda\Big]\ud t\\
		&\quad+\kappa_1e^{\lambda t+\kappa_1 H(X(t))}P(t)^\top\ud W(t),\quad t\ge 0.
	\end{align*}
	Denote $\eta_R:=\inf\{t>0:\|X(t)\|>R\}$, $R>1$. Then for any $t\ge 0$, $R>1$, and $\lambda\in\mbb R$,
	\begin{align*}
		&\mbb Ee^{\lambda (t\wedge \eta_R)+\kappa_1 H(X(t\wedge \eta_R))}
		= e^{\kappa_1 H(X(0))}\\
		&+\mbb E\int_0^{t\wedge \eta_R}e^{\lambda s+\kappa_1 H(X(s))}\Big[\kappa_1\mathcal{L}_{\textup{ext}} H(X(s)))+\frac{\kappa_1^2}{2}\| P(s)\|^2+\lambda \Big]\ud s,
	\end{align*}
	which, together with \eqref{sec3eq3} and $xe^x\ge e^x -1$, $x\in\mbb R$, yields		
	\begin{align*}
		&\;\mbb Ee^{\lambda (t\wedge \eta_R)+\kappa_1 H(X(t\wedge \eta_R))} \\
		\le &\; e^{\kappa_1 H(X(0))}+\mbb E\int_0^{t\wedge \eta_R}e^{\lambda s+\kappa_1 H(X(s))}\Big[-\frac{v}{8}\beta_0'\kappa_1 H(X(s))+\kappa_1C_0+\lambda\Big]\ud s \\
		\le &\; e^{\kappa_1 H(X(0))}+\mbb E\int_0^{t\wedge \eta_R}e^{\lambda s}\big[(\kappa_1 C_0+\lambda-\frac{v}{8}\beta_0')e^{\kappa_1 H(X(s))}+\frac{v}{8}\beta_0'\big]\ud s.
	\end{align*}		
	Setting $\lambda=\lambda_0:=\frac{v\beta_0'}{16}$,
	then it holds $\kappa_1 C_0+\lambda_0-\frac{v}{8}\beta_0'\le 0$ due to $\kappa_1\le \frac{v}{16C_0}\beta_0^{\prime}$. Accordingly,
	\begin{align}\label{sec3eq4}
		\mbb Ee^{\lambda_0 (t\wedge \eta_R)+\kappa_1 H(X(t\wedge \eta_R))}
		\le e^{\kappa_1 H(X(0))}+\frac{v\beta_0'}{8\lambda_0}(e^{\lambda_0 t}-1). 
	\end{align} 		
	Note that $\eta_R\uparrow +\infty$, a.s., which, combined with \eqref{sec3eq4} and the Fatou lemma, gives
	\begin{align*}
		\mbb Ee^{\lambda_0 t+\kappa_1 H(X(t))}\le e^{\kappa_1 H(X(0))}+\frac{v\beta_0'}{8\lambda_0}(e^{\lambda_0 t}-1)\quad\forall~t\ge 0.
	\end{align*} 
	Thus, one has that $\mbb Ee^{\kappa_1 H(X(t))}\le e^{\kappa_1 H(X(0))}+\frac{v\beta_0'}{8\lambda_0}$.
	The proof is complete.
\end{proof}

\begin{rem}
	It follows from Lemmas \ref{equivalent} and \ref{expintergal} that the solutions of \eqref{eq:Langevin} and \eqref{eq:Langevinorg} are also uniform-in-time exponentially integrable.
\end{rem}

\subsection{Invariant measure and ergodicity}
First, we introduce the following assumption ensuring the ergodicity of \eqref{eq:Langevinorg}.
\begin{assum}\label{asp:V0}\cite[Condition 3.1]{MSH02}
	Assume that $V\in C^\infty_{\rm p}(\R^d;\mbb R)$ and satisfies 
	(i) $V(q)\ge0$ for all $q\in\R^d$;
	(ii) There are $\alpha_0>0$ and $\alpha_1\in(0,1)$ such that 
	\begin{align*}
		\frac{1}{2}\langle \nabla V(q),q\rangle\ge \alpha_1V(q)+v^2\frac{\alpha_1(2-\alpha_1)}{8(1-\alpha_1)}\|q\|^2-\alpha_0.
	\end{align*}
\end{assum}
Define the Lyapunov function
\begin{align}\label{eq:H}
	\widetilde H(p,q)=\frac{1}{2}\|p\|^2+V(q)+\frac{v}{2}\langle p,q\rangle +\frac{v^2}{4}\|q\|^2+1,\quad p,q\in \R^d.
\end{align}
According to \cite[Theorem 3.2]{MSH02}, under Assumption \ref{asp:V0}, for any $l\ge1$, there exist positive numbers $K_l$ and $k_l$ such that for any measurable function $\phi:\R^{2d}\to \R$ with $|\phi(p,q)|\le (\widetilde H(p,q))^l$, 
\begin{equation}\label{eq:PQergodic}
	\Big|\E\big[\phi\big(\bar{\mathfrak{P}}(t,p,q),\bar{\mathfrak{Q}}(t,p,q)\big)\big] -
	\widetilde\mu(\phi)\Big|\le K_l (\widetilde H(p,q))^le^{-k_l t}\quad \forall~t\ge0.
\end{equation} 
\begin{rem}\label{remark1}
	Since $\widetilde{H}(p,q)\ge 1+\frac{1}{8}\|p\|^2+\frac{v^2}{12}\|q\|^2$, we have that \eqref{eq:PQergodic} holds for any $\phi\in C^\infty_{\rm p}(\mbb R^{2d};\mbb R)$.
\end{rem}

Denote by $\{S^{\textup{ext}}_t\}_{t\ge0}$ (resp.\ $\{S_t\}_{t\ge0}$) the Markov semigroup generated by \eqref{eq:SAV1} (resp.\ \eqref{eq:Langevin}), i.e.,
\begin{align*}
	S^{\textup{ext}}_t\psi(p,q,r_0)&=\mbb E\big[\psi\big(P(t,p,q,r_0),Q(t,p,q,r_0),r(t,p,q,r_0)\big)\big],\quad\psi\in\mscr B_b(\mbb R^{2d+1};\mbb R),\\
	S_t\phi(p,q)&=\mbb E\big[\phi\big(\mathfrak{P}(t,p,q),\mathfrak{Q}(t,p,q)\big)\big],\quad \phi\in\mscr B_b(\mbb R^{2d};\mbb R).
\end{align*} 
Due to the equivalence between \eqref{eq:Langevinorg} and \eqref{eq:Langevin}, the semigroup $\{S_t\}_{t\ge 0}$ admits a unique invariant measure, denoted by $\mu$, and thus $\{S_t\}_{t\ge 0}$ or \eqref{eq:Langevin} is also ergodic. 
\begin{lem}\label{lem:inv-measure-projection}
	Let Assumptions \ref{asp:V1} and \ref{asp:V0} hold.
	Define the map $\pi:\R^{2d}\to \mathcal{M}:=\{(p,q,r)\in\R^{2d+1}: r=\sqrt{U_\alpha(q)}\}$ by $\pi(p,q)=(p,q,\sqrt{U_\alpha(q)})$ and let 
	$\nu_0:=\pi_\#\mu$ be the pushforward of $\mu$ under $\pi$. Then $\nu_0$ is an ergodic invariant measure for the extended system \eqref{eq:SAV1}.
\end{lem}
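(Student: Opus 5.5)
The plan is to transfer invariance and ergodicity from $\mu$ to $\nu_0$ through Lemma \ref{equivalent}. The key identity is that for $(p,q)\in\R^{2d}$ and $t\ge 0$, Lemma \ref{equivalent} gives
\begin{align*}
\big(P(t,\pi(p,q)),Q(t,\pi(p,q)),r(t,\pi(p,q))\big)=\pi\big(\mathfrak P(t,p,q),\mathfrak Q(t,p,q)\big).
\end{align*}
Hence for every $\psi\in\mscr B_b(\R^{2d+1};\R)$ we get $S^{\rm ext}_t\psi\circ\pi=S_t(\psi\circ\pi)$, which says the extended flow started on $\mathcal M$ stays on $\mathcal M$ and is conjugate to the Langevin flow.

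\textbf{Invariance.} Compute directly:
\begin{align*}
\int S^{\rm ext}_t\psi\,\ud\nu_0=\int (S^{\rm ext}_t\psi)\circ\pi\,\ud\mu=\int S_t(\psi\circ\pi)\,\ud\mu=\int\psi\circ\pi\,\ud\mu=\int\psi\,\ud\nu_0 ,
\end{align*}
using the invariance of $\mu$ under $\{S_t\}$. Measurability of $\pi$ is clear because $U_\alpha\ge1$ is continuous.

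\textbf{Ergodicity.} I would use the characterization that an invariant measure is ergodic iff every invariant set has measure $0$ or $1$. An equivalent route is to check that $\nu_0$ is extremal, or that the time averages converge in $L^2(\nu_0)$.

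Take a Borel set $A\subset\R^{2d+1}$ with $S^{\rm ext}_t\mathbf 1_A=\mathbf 1_A$ $\nu_0$-a.e. for all $t$. Put $B:=\pi^{-1}(A)$. Then $\mathbf 1_B=\mathbf 1_A\circ\pi$, and
\begin{align*}
S_t\mathbf 1_B=(S^{\rm ext}_t\mathbf 1_A)\circ\pi=\mathbf 1_A\circ\pi=\mathbf 1_B\quad \mu\text{-a.e.},
\end{align*}
since a $\nu_0$-null set pulls back to a $\mu$-null set. So $B$ is $\{S_t\}$-invariant, and ergodicity of $\mu$ (uniqueness of its invariant measure) gives $\mu(B)\in\{0,1\}$. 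Therefore $\nu_0(A)=\mu(B)\in\{0,1\}$.

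Alternatively, the $L^2$ time-average statement transfers directly. For $\psi\in L^2(\nu_0)$, the averages $\frac1T\int_0^T S^{\rm ext}_t\psi\,\ud t$ composed with $\pi$ equal $\frac1T\int_0^TS_t(\psi\circ\pi)\,\ud t$, which converge to $\mu(\psi\circ\pi)=\nu_0(\psi)$ in $L^2(\mu)$. Since $\pi$ is a bijection onto $\mathcal M$, which carries $\nu_0$, this is convergence in $L^2(\nu_0)$.

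\textbf{Main obstacle.} The delicate point is a technicality: the relation $S^{\rm ext}_t\psi\circ\pi=S_t(\psi\circ\pi)$ needs pathwise uniqueness. That is provided by Lemma \ref{equivalent}, together with the strong well-posedness from \eqref{eq:Lext}. One must also check that the a.e. statements for $\nu_0$ and $\mu$ correspond, which holds because $\pi:\R^{2d}\to\mathcal M$ is a measurable bijection with continuous inverse $(p,q,r)\mapsto(p,q)$. Everything else is formal.
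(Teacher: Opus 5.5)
Your proposal is correct and follows the paper's proof essentially step for step. Invariance comes from the conjugacy $(S^{\rm ext}_t\psi)\circ\pi=S_t(\psi\circ\pi)$ given by Lemma \ref{equivalent}, and ergodicity from pulling a $\nu_0$-invariant set $A$ back to $B=\pi^{-1}(A)$ and concluding $\nu_0(A)=\mu(B)\in\{0,1\}$; your direct derivation of the conjugacy for all $\psi\in\mscr B_b$ from the pathwise identity is, if anything, slightly cleaner than the paper's passage from $C_b$ to $\mscr B_b$ by a density remark.
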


\begin{proof}
	Since $\nu_0=\pi_\#\mu$ is supported on $\mathcal{M}$, for $g\in C_b(\R^{2d+1};\mbb R)$,
	\begin{align}\label{sec3eq1}
		\int_{\R^{2d+1}}g\ud\nu_0=\int_{\mcal M}g\ud\nu_0
		=\int_{\R^{2d}}g\circ\pi\ud\mu.
	\end{align}
	Applying Lemma \ref{equivalent},
	we obtain for any $\Phi\in C_b(\R^{2d+1};\mbb R)$ and $t\ge0$,
	\begin{align}\label{sec3eq2}
		&S_t^{\textup{ext}}\Phi(\pi(p,q))=\E^{}[\Phi(P(t,\pi(p,q)),Q(t,\pi(p,q)),r(t,\pi(p,q)))]\notag\\
		&=\E[\Phi(\mathfrak{P}(t,p,q),\mathfrak{Q}(t,p,q),\sqrt{U_\alpha(\mathfrak{Q}(t,p,q))})]=S_t(\Phi\circ \pi)(p,q).
	\end{align}
	Using \eqref{sec3eq1} twice, \eqref{sec3eq2}, and the fact that $\mu$ is invariant for $\{S_t\}_{t\ge 0}$, we infer that for any $\Phi\in C_b(\R^{2d+1};\mbb R)$ and $t\ge0$,
	\begin{equation*}
		\int_{\R^{2d+1}}S_t^{\textup{ext}}\Phi\ud\nu_0\!=\!\int_{\R^{2d}} (S_t^{\textup{ext}}\Phi)\circ\pi\ud\mu =\int_{\R^{2d}} S_t(\Phi\circ\pi)\ud\mu \!=\!\int_{\R^{2d}}(\Phi\circ\pi)\ud\mu=\int_{\R^{2d+1}}\Phi\ud\nu_0,
	\end{equation*}
	so $\nu_0$ is an invariant measure of the extended system \eqref{eq:SAV1} or $\{S^{\textup{ext}}_t\}_{t\ge0}$.
	
	Next, we prove the ergodicity of $\nu_0$, by virtue of the ergodicity of $\mu$ for $\{S_t\}_{t>0}$.
	Let $A\subset \mbb R^{2d+1}$ be an $\nu_0$-invariant set for the extended dynamics \eqref{eq:SAV1}, i.e., there is a Borel set $\Gamma\subset \mbb R^{2d+1}$ with $\nu_0(\Gamma)=1$ such that
	$S_t^{\textup{ext}}\mathbf{1}_A(p,q,r)=\mathbf{1}_A(p,q,r)$ for any $(p,q,r)\in\Gamma$ and $t\ge 0$. Setting $B:=\pi^{-1}(A)\subset\R^{2d}$, we have  $\mathbf{1}_B(p,q)=\mathbf{1}_A\circ \pi(p,q)$ for any $(p,q)\in\mbb R^{2d}$. 
	Since $C_b(\mbb R^{2d+1};\mbb R)$ is dense in $\mscr B_b(\mbb R^{2d+1};\mbb R)$ under the topology of pointwise convergence, \eqref{sec3eq2} also holds for any $g\in\mscr B_b(\mbb R^{2d+1};\mbb R)$. 
	Thus, it holds that for $(p,q)\in\pi^{-1}(\Gamma)$,
	$
	S_t\mathbf{1}_B(p,q)=S_t(\mathbf{1}_A\circ \pi)(p,q)
	=S_t^{\textup{ext}}\mathbf{1}_A(\pi(p,q))
	=\mathbf{1}_A(\pi(p,q))
	=\mathbf{1}_B(p,q),
	$
	which implies that $B$ is $\mu$-invariant for the original system \eqref{eq:Langevin} due to $\mu(\pi^{-1}(\Gamma))=\nu_0(\Gamma)=1$. 
	By the ergodicity of $\mu$ for \eqref{eq:Langevin}, we have $\mu(B)\in\{0,1\}$, which implies
	$
	\nu_0(A)=\mu(\pi^{-1}(A))=\mu(B)\in\{0,1\}.
	$
	Hence $\nu_0$ is ergodic owing to \cite[Theorem 5.15]{Daprato06}.
\end{proof}

\section{Proof of Theorem \ref{theo:weakorder}}\label{Sec:weakconvergence}
In this section, we make use of the Kolmogorov equation to give the proof of Theorem \ref{theo:weakorder}, where two key ingredients are  the uniform-in-time exponential integrability of the numerical solution $\{(P_n,Q_n,r_n)\}$ and uniform-in-time error estimate between $r_n$ and $\sqrt{U_\alpha(Q_n)}$.

\subsection{Uniform-in-time exponential integrability}\label{Sec:3.1}
For the numerical method \eqref{eq:SAVN} for \eqref{eq:SAV1}, we introduce the Lyapunov function
\begin{equation}
	\mathcal{V}^\tau(p,q,r):=\frac{1}{4}\|p\|^2+\frac{1}{2}r^2+\frac14(1+v\tau)(\alpha-\frac{v^2}{4})\left(1-\frac{v}{2(1+v\tau)}\tau\right)\|q\|^2,
\end{equation}
where $p,q\in \R^d$ and $r\in\R$.
For simplicity, we denote $\mathcal{V}^\tau_n:=\mathcal{V}^\tau(P_{n},Q_{n},r_{n})$ for any $n=0,1,\ldots$ Then the following Lyapunov structure holds.
\begin{lem}\label{lem:Lyapunov}
	Let Assumption \ref{asp:V1} hold. Denote $v_0:=\min\{\frac{v}{3},\frac{v}{4}\beta_0\}$. 
	Then there exists $K>0$ such that for any $\tau\in(0,v^{-1})$,
	$\E[\mathcal{V}^\tau_{n+1}|\mathscr{F}_{t_n}]\le (1-\frac12 v_0\tau) \mathcal{V}^\tau_n+K\tau.
	$
\end{lem}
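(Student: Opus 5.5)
The plan is to derive a discrete energy identity for \eqref{eq:SAVN} by testing each equation against the variable at level $n+1$, exactly as in the continuous computation leading to \eqref{eq:Lext}. The SAV structure is what makes this work: the explicit nonlinear terms cancel, so the only contributions left are dissipation, noise and the lower-order term $\zeta_\alpha$. Throughout I use the identity $2\langle a-b,a\rangle=\|a\|^2-\|b\|^2+\|a-b\|^2$.

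\emph{Step 1 (testing the equations).}
\begin{itemize}
\item Take the inner product of \eqref{eq:SAV1P} with $\tau P_{n+1}$. This gives
\begin{align*}
\tfrac12\big(\|P_{n+1}\|^2-\|P_n\|^2+\|P_{n+1}-P_n\|^2\big)
&=\tau\frac{r_{n+1}}{\sqrt{U_\alpha(Q_n)}}\langle -\nabla V(Q_n)+\alpha Q_n,P_{n+1}\rangle-\tfrac v2\tau\|P_{n+1}\|^2\\
&\quad-(\alpha-\tfrac{v^2}{4})\tau\langle Q_{n+1},P_{n+1}\rangle+\langle \Delta W_n,P_{n+1}\rangle .
\end{align*}
\item Multiply \eqref{eq:SAV1R} by $2\tau r_{n+1}$. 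The resulting term $\tau\frac{r_{n+1}}{\sqrt{U_\alpha(Q_n)}}\langle\nabla V(Q_n)-\alpha Q_n,P_{n+1}\rangle$ cancels the nonlinear term above. What is left on the $r$ side is $r_{n+1}^2-r_n^2+(r_{n+1}-r_n)^2$ together with $-\frac v2\beta_0\tau r_{n+1}^2-2v\tau\zeta_\alpha(Q_n)r_{n+1}$.
\item Use \eqref{eq:SAV1Q} in the form $\tau P_{n+1}=Q_{n+1}-Q_n+\frac v2\tau Q_{n+1}$. This gives
$$\tau\langle Q_{n+1},P_{n+1}\rangle=\tfrac12\big(\|Q_{n+1}\|^2-\|Q_n\|^2+\|Q_{n+1}-Q_n\|^2\big)+\tfrac v2\tau\|Q_{n+1}\|^2 .$$
Since $\alpha>\frac{v^2}{4}$, this produces a telescoping $Q$-energy with positive coefficient $(\alpha-\frac{v^2}{4})$ and a dissipation $\frac v2\tau(\alpha-\frac{v^2}{4})\|Q_{n+1}\|^2$.
\end{itemize}
Summing the three relations, and halving (this is why $\mathcal V^\tau$ carries the weights $\frac14,\frac12$), I expect an inequality of the form
$$(1+c\tau)\text{-weighted energy at }n+1\le \text{energy at }n+\text{noise}+\text{$\zeta$-term}.$$
Here the $\|Q_{n+1}\|^2$ coefficient $(1+v\tau)$ and its correction $\big(1-\frac{v\tau}{2(1+v\tau)}\big)$ in $\mathcal V^\tau$ come from reorganizing $\|Q_{n+1}\|^2$ versus $\|Q_n\|^2$.

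\emph{Step 2 (noise term).} The difficulty is that $P_{n+1}$ depends on $\Delta W_n$, partly through $r_{n+1}$. I split
$$\langle\Delta W_n,P_{n+1}\rangle=\langle\Delta W_n,P_n\rangle+\langle \Delta W_n,P_{n+1}-P_n\rangle .$$
The first piece has zero conditional expectation given $\mathscr F_{t_n}$. For the second piece, Young's inequality gives the bound $\frac12\|P_{n+1}-P_n\|^2+\frac12\|\Delta W_n\|^2$. The first part is absorbed by the numerical dissipation $\frac12\|P_{n+1}-P_n\|^2$ from Step 1, and the second has conditional expectation $\frac d2\tau$, which goes into the $K\tau$ term.

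\emph{Step 3 ($\zeta$-term and closing).} For the $\zeta$-term I estimate
$$2v\tau|\zeta_\alpha(Q_n)r_{n+1}|\le \tfrac v4\beta_0\tau r_{n+1}^2+\tfrac{4v}{\beta_0}\tau\zeta_\alpha(Q_n)^2 ,$$
and then apply \eqref{eq:zeta} with a small $\epsilon_0$. The resulting $\epsilon_0\tau\|Q_n\|^2$ involves $Q_n$ rather than $Q_{n+1}$. I handle this by writing $\|Q_n\|^2\le 2\|Q_{n+1}\|^2+2\|Q_{n+1}-Q_n\|^2$. Both pieces are then absorbed by the $\|Q_{n+1}\|^2$ dissipation and the $\|Q_{n+1}-Q_n\|^2$ numerical dissipation, provided $\epsilon_0$ is small and $\tau<v^{-1}$.

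After these absorptions I expect
$$(1+v_0\tau)\,\E[\mathcal V^\tau_{n+1}\,|\,\mathscr F_{t_n}]\le \mathcal V^\tau_n+K\tau ,$$
with $v_0=\min\{\frac v3,\frac v4\beta_0\}$ coming from the smallest of the surviving dissipation rates in $p$, $q$ and $r$. Since $\tau<v^{-1}$ implies $v_0\tau\le1$, we have $(1+v_0\tau)^{-1}\le 1-\frac12 v_0\tau$, which gives the claim.

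\emph{Main obstacle.} The main obstacle is the bookkeeping in Step 3. The rate constants must be tracked so that the dissipations remaining after all absorptions still dominate $v_0\mathcal V^\tau_{n+1}$ uniformly in $\tau\in(0,v^{-1})$, and the $\tau$-dependent $Q$-coefficient in $\mathcal V^\tau$ must be exactly what makes the $Q$-terms telescope.
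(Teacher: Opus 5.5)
Your Steps 1 and 2 and the Young split of the $\zeta$-term match the paper's proof. The paper tests with $\frac12P_{n+1}$, $r_{n+1}$ and $\frac12(\alpha-\frac{v^2}{4})Q_{n+1}$, which is your computation halved. You deviate only in handling $\epsilon_0\tau\|Q_n\|^2$, and there the bookkeeping you postpone does not close as stated.

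\textbf{How the paper handles this term.} The paper does not move it to level $n+1$. It fixes $\epsilon_0$ so that $\frac{2v}{\beta_0}\tau\zeta_\alpha(Q_n)^2\le\frac v8(\alpha-\frac{v^2}{4})\tau\|Q_n\|^2+K\tau$, and keeps this term on the right. Combined with the telescoping term it gives $\frac14(\alpha-\frac{v^2}{4})(1+\frac{v\tau}{2})\|Q_n\|^2$. Since $(1+v\tau)\big(1-\frac{v\tau}{2(1+v\tau)}\big)=1+\frac{v\tau}{2}$, this is exactly the $q$-weight of $\mathcal V^\tau_n$; that is where the $\tau$-dependent coefficient comes from. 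Your explanation via ``reorganizing'' does not account for it, since in your route the level-$n$ weight is $\tau$-independent. Meanwhile the full dissipation $\frac v4(\alpha-\frac{v^2}{4})\tau\|Q_{n+1}\|^2$ stays on the left. The $Q$-condition for $(1+v_0\tau)\mathcal V^\tau_{n+1}\le\cdots$ is then $(1+\frac{v\tau}{3})(1+\frac{v\tau}{2})\le 1+v\tau$, i.e.\ $v\tau\le1$, which is tight at $\tau=v^{-1}$.

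\textbf{Why your absorption fails.} Your bound $\|Q_n\|^2\le 2\|Q_{n+1}\|^2+2\|Q_{n+1}-Q_n\|^2$ lowers the left $\|Q_{n+1}\|^2$ weight to $\frac14(\alpha-\frac{v^2}{4})(1+v\tau-\delta\tau)$, where $\delta:=\frac{16v\epsilon_0}{\beta_0(\alpha-v^2/4)}>0$. The condition becomes $(1+\frac{v\tau}{3})(1+\frac{v\tau}{2})\le1+v\tau-\delta\tau$, i.e.\ $\delta\le\frac v6(1-v\tau)$. When $v_0=\frac v3$ (i.e.\ $\beta_0\ge\frac43$, as in the paper's example $\beta_0=6$), this fails as $\tau\uparrow v^{-1}$ for every fixed $\epsilon_0>0$. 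Letting $\epsilon_0\to0$ instead makes $K(\epsilon_0,L_1,\beta_1)$ in \eqref{eq:zeta} blow up, since $\zeta_\alpha$ may grow like $\|q\|^{\beta_1}$. So your absorption does not deliver the intermediate $(1+v_0\tau)\E[\mathcal V^\tau_{n+1}|\mathscr F_{t_n}]\le\mathcal V^\tau_n+K\tau$ with $K$ uniform on $(0,v^{-1})$.

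\textbf{Two repairs.} Either keep the $\|Q_n\|^2$ term at level $n$, as the paper does. Or skip the intermediate and aim directly at the stated factor $1-\frac12v_0\tau$, which has room to spare. Your inequality gives $\E[\mathcal V^\tau_{n+1}|\mathscr F_{t_n}]\le\rho^{-1}(\mathcal V^\tau_n+K\tau)$ with $\rho=\min\{1+v\tau,\,1+\frac v4\beta_0\tau,\,\frac{1+v\tau-\delta\tau}{1+v\tau/2}\}$. One checks $\rho^{-1}\le 1-\frac12 v_0\tau$ for all $\tau\in(0,v^{-1})$ once $\delta\le\frac v6$. For the $Q$-entry, $(1+v\tau-\delta\tau)(1-\frac{v\tau}{6})-(1+\frac{v\tau}{2})=\tau\big[\frac v3-\frac{v^2\tau}{6}-\delta(1-\frac{v\tau}{6})\big]\ge0$.
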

\begin{proof}
	Owing to \eqref{eq:SAVN},
	\begin{align*}
		&\frac12\langle P_{n+1},\frac{P_{n+1}-P_n}{\tau}\rangle=\frac{r_{n+1}}{\sqrt{U_\alpha(Q_n)}}
		\langle \frac12P_{n+1},-\nabla V(Q_n)+\alpha Q_n\rangle-\frac{v}{4}\|P_{n+1}\|^2\\
		&\qquad\qquad\qquad\qquad\qquad+\frac12(\frac{v^2}{4} -\alpha)\langle P_{n+1},Q_{n+1}\rangle+\frac12\tau^{-1}\langle P_{n+1}, \Delta W_n\rangle,\\
		&r_{n+1}\frac{r_{n+1}-r_n}{\tau}
		=\frac{r_{n+1}}{\sqrt{U_\alpha(Q_n)}}\langle\nabla V(Q_n)-\alpha Q_n,\frac12 P_{n+1}\rangle-\frac{v}{4}r_{n+1}^2\beta_0-vr_{n+1}\zeta_\alpha(Q_n),\\
		&\frac12(\alpha-\frac{v^2}{4})\langle Q_{n+1},\frac{Q_{n+1}-Q_n}{\tau}\rangle
		=\frac12(\alpha-\frac{v^2}{4})\langle Q_{n+1},P_{n+1}\rangle-\frac{v}{4}(\alpha-\frac{v^2}{4})\|Q_{n+1}\|^2.
	\end{align*}
	Using the above three relations and
	the fundamental identity $2\langle a,a-b\rangle=\|a\|^2-\|b\|^2+\|a-b\|^2$ for $a,b\in\R^d$, we arrive at 
	\begin{align}\label{sec3eq5}
		&\frac14\left(\|P_{n+1}\|^2-\|P_n\|^2+\|P_{n+1}-P_n\|^2\right)+ \frac{1}{2}\left(r_{n+1}^2-r_n^2+|r_{n+1}-r_n|^2\right)\\
		&\quad+\frac14(\alpha-\frac{v^2}{4})\left(\|Q_{n+1}\|^2-\|Q_n\|^2+\|Q_{n+1}-Q_n\|^2\right)\notag\\
		&=\!-\frac{v}{4}\tau\|P_{n+1}\|^2\!+\!\frac12\langle P_{n+1}, \Delta W_n\rangle\!-\frac{v}{4}(\alpha-\frac{v^2}{4})\tau\|Q_{n+1}\|^2\!-\!\frac{v}{4}\tau \beta_0r_{n+1}^2\!-vr_{n+1}\tau\zeta_\alpha(Q_n). \notag
	\end{align}
	By \eqref{eq:zeta} and the Young inequality,
	\begin{equation}\label{sec3eq6}
		|vr_{n+1}\tau\zeta_\alpha(Q_n)|\le\frac{v}{8}\tau r_{n+1}^2\beta_0+\frac{2}{\beta_0}v\tau|\zeta_\alpha(Q_n)|^2\le \frac{v}{8}\tau r_{n+1}^2\beta_0+\frac{v}{8}(\alpha-\frac{v^2}{4})\tau\|Q_n\|^2+K\tau.
	\end{equation}
	Plugging \eqref{sec3eq6} into \eqref{sec3eq5} leads to
	\begin{align*}
		&\frac14\left(\|P_{n+1}\|^2-\|P_n\|^2+\|P_{n+1}-P_n\|^2\right)+ \frac{1}{2}\left(r_{n+1}^2-r_n^2+|r_{n+1}-r_n|^2\right)\\
		&\quad+\frac14(1+v\tau)(\alpha-\frac{v^2}{4})\left(\|Q_{n+1}\|^2-\|Q_n\|^2\right)+\frac14(\alpha-\frac{v^2}{4})\|Q_{n+1}-Q_n\|^2\\
		&\le-\frac{v}{4}\tau\|P_{n+1}\|^2+\frac18\|P_{n+1}-P_n\|^2+K\|\Delta W_n\|^2-\frac{v}{8}\beta_0\tau r_{n+1}^2\\
		&\quad-\frac{v}{8}(\alpha-\frac{v^2}{4})\tau\|Q_n\|^2+K\tau+\frac12\langle P_n, \Delta W_n\rangle.
	\end{align*}
	Rearranging the above inequality, we have 
	\begin{align*}
		&\frac{1}{4}(1+v\tau)\|P_{n+1}\|^2+\frac{1}{2}(1+\frac{v}{4}\beta_0\tau)r_{n+1}^2+\frac14(1+v\tau)(\alpha-\frac{v^2}{4})\|Q_{n+1}\|^2\\
		&\le \frac{1}{4}\|P_{n}\|^2+\frac{1}{2}r_n^2+\frac14(1+v\tau)(\alpha-\frac{v^2}{4})\left(1-\frac{v}{2(1+v\tau)}\tau\right)\|Q_{n}\|^2\\
		&\quad+K\|\Delta W_n\|^2+K\tau+\frac12\langle P_n, \Delta W_n\rangle.
	\end{align*}
	It follows from $v_0=\min\{\frac{v}{3},\frac{v}{4}\beta_0\}$ and $\tau\in(0,v^{-1}]$ that $(1+v_0\tau)^{-1}\le 1-\frac{v_0}{2}\tau$ and $1+v_0\tau\le \min\big\{1+v\tau,1+\frac{v}{4}\beta_0\tau,\big(1-\frac{v}{2(1+v\tau)}\tau\big)^{-1}\big\}.$ 
	Then
	there is a constant $C_1>0$ independent of $\tau$ such that for any $n\in\mathbb{N}$, 
	\begin{align}\label{eq:Vn+1}
		(1+v_0\tau)\mathcal{V}^\tau_{n+1}&\le\mathcal{V}^\tau_n+C_1\|\Delta W_n\|^2+C_1\tau+\frac12\langle P_n,\Delta W_n\rangle\quad\forall~\tau\in(0,v^{-1}].
	\end{align}
	Thus, the proof is completed.
\end{proof}
Next, we prove the uniform-in-time exponential integrability of the numerical solution $\{(P_n,Q_n,r_n)\}$. 
\begin{lem}\label{Numer-expinter}
	Let Assumption \ref{asp:V1} hold. Denote $\kappa_2:=\frac{v_0}{4}$ with $v_0=\min\{\frac{v}{3},\frac{v}{4}\beta_0\}$, and $\tau_1:=\min\{v^{-1},(8C_1\kappa_2)^{-1}\}$ with $C_1$ defined in \eqref{eq:Vn+1}.
	Then there exists a constant $K(P_0,Q_0,r_0)>0$ such that
	$\sup_{n\in \mathbb{N}}\E[e^{\kappa_2\mathcal{V}_{n+1}^\tau}]\le K(P_0,Q_0,r_0)$ for any $\tau\in(0,\tau_1].$
\end{lem}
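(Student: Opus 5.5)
Starting from the one-step inequality \eqref{eq:Vn+1} established in the proof of Lemma \ref{lem:Lyapunov},
\[
(1+v_0\tau)\mathcal{V}^\tau_{n+1}\le\mathcal{V}^\tau_n+C_1\|\Delta W_n\|^2+C_1\tau+\tfrac12\langle P_n,\Delta W_n\rangle ,
\]
I will exponentiate and take the conditional expectation given $\mathscr F_{t_n}$. Since $(1+v_0\tau)^{-1}\le 1-\frac{v_0}{2}\tau\le1$, dividing by $1+v_0\tau$ gives
\[
\E[e^{\kappa_2\mathcal{V}^\tau_{n+1}}|\mathscr F_{t_n}]\le e^{\kappa_2(1+v_0\tau)^{-1}\mathcal{V}^\tau_n+\kappa_2C_1\tau}\,\E\big[e^{\kappa_2C_1\|\Delta W_n\|^2+\frac{\kappa_2}{2}\langle P_n,\Delta W_n\rangle}\big|\mathscr F_{t_n}\big].
\]
The Gaussian expectation is explicit. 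Because $\Delta W_n\sim N(0,\tau I_d)$ is independent of $\mathscr F_{t_n}$, for $a:=\kappa_2C_1$ with $2a\tau<1$ and $b:=\frac{\kappa_2}{2}P_n$ one has
\[
\E[e^{a\|\Delta W_n\|^2+\langle b,\Delta W_n\rangle}]=(1-2a\tau)^{-d/2}\exp\Big(\frac{\tau\|b\|^2}{2(1-2a\tau)}\Big).
\]
The choice $\tau\le(8C_1\kappa_2)^{-1}$ gives $2a\tau\le\frac14$. Hence this factor is bounded by $e^{K\tau}\exp(\frac{\tau\kappa_2^2}{6}\|P_n\|^2)$, using $(1-x)^{-d/2}\le e^{Kx}$ for $x\le 1/4$.

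The key point is that the quadratic term is absorbed by the contraction. Since $\|P_n\|^2\le4\mathcal{V}^\tau_n$ and $\kappa_2=v_0/4$,
\[
\tfrac{\tau\kappa_2^2}{6}\|P_n\|^2\le \tfrac{2}{3}\kappa_2^2\tau\mathcal{V}^\tau_n=\tfrac{v_0}{6}\kappa_2\tau\mathcal{V}^\tau_n .
\]
Combining this with $(1+v_0\tau)^{-1}\le1-\frac{v_0}{2}\tau$ yields
\[
\E[e^{\kappa_2\mathcal{V}^\tau_{n+1}}|\mathscr F_{t_n}]\le e^{K\tau}\exp\big(\kappa_2(1-\tfrac{v_0}{3}\tau)\mathcal{V}^\tau_n\big).
\]
To obtain a recursion that does not accumulate, I will use Jensen's inequality (concavity of $x\mapsto x^{\theta}$ with $\theta:=1-\frac{v_0}{3}\tau\in(0,1)$) to write $\E[e^{\theta\kappa_2\mathcal{V}_n}]\le(\E e^{\kappa_2\mathcal{V}_n})^{\theta}$. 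Setting $y_n:=\E e^{\kappa_2\mathcal{V}^\tau_n}$, this gives $y_{n+1}\le e^{K\tau}y_n^{\theta}$.

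Iterating in logarithmic form, $\log y_{n+1}\le K\tau+\theta\log y_n$, gives
\[
\log y_n\le \theta^n\log y_0+\frac{K\tau}{1-\theta}=\theta^n\log y_0+\frac{3K}{v_0},
\]
which is uniform in $n$ and $\tau$. Since $\log y_0\ge0$, it follows that $y_n\le y_0e^{3K/v_0}$, with $y_0=e^{\kappa_2\mathcal{V}^\tau(P_0,Q_0,r_0)}$. In turn, $y_0$ is bounded by $K(P_0,Q_0,r_0)$ uniformly in $\tau\le v^{-1}$.

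The step I expect to be most delicate is the absorption of the cross term $\frac12\langle P_n,\Delta W_n\rangle$. Its Gaussian moment generates $e^{O(\tau)\|P_n\|^2}$, and the constant must be beaten by the contraction rate $\frac{v_0}{2}$. This is exactly why $\kappa_2$ must be small ($\kappa_2=v_0/4$) and why $\tau\le(8C_1\kappa_2)^{-1}$ is needed to keep the $\|\Delta W_n\|^2$ moment finite and $O(\tau)$ in the exponent. If the constants do not close as computed, I would shrink $\kappa_2$ further, since the argument only needs $\frac{\kappa_2}{2}(1-2a\tau)^{-1}\cdot4<\frac{v_0}{2}$.
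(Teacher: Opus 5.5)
Your proof is correct and follows the paper's overall plan. Both exponentiate \eqref{eq:Vn+1}, compute the Gaussian moments conditionally on $\mathscr F_{t_n}$, absorb the $\tau\|P_n\|^2$ term into the contraction via $\|P_n\|^2\le 4\mathcal V^\tau_n$ and $\kappa_2=v_0/4$, and then iterate. You differ from the paper in two technical choices.

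First, the paper separates $C_1\lambda\|\Delta W_n\|^2$ from $\frac12(1+v_0\tau)^{-1}\lambda\langle P_n,\Delta W_n\rangle$ with the conditional H\"older inequality. That doubles both exponents and leaves the net factor $1-\frac{v_0}{4}\tau$. Your exact joint moment generating function avoids this loss and gives $1-\frac{v_0}{3}\tau$.

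Second, and more substantively, the iteration is closed differently. The paper uses the tower property, reapplying the one-step bound $\E[e^{\lambda\mathcal V^\tau_{n+1}}|\mathscr F_{t_n}]\le e^{\lambda(1-\frac{v_0}{4}\tau)\mathcal V^\tau_n+C_2\lambda\tau}$ at the shrinking exponents $\lambda(1-\frac{v_0}{4}\tau)^k$. This is why it proves that bound for every $\lambda\in(0,\frac{v_0}{4}]$ with $\lambda\tau\le(8C_1)^{-1}$. You need the one-step bound only at $\lambda=\kappa_2$. Jensen's inequality $\E[e^{\theta\kappa_2\mathcal V^\tau_n}]\le(\E e^{\kappa_2\mathcal V^\tau_n})^{\theta}$ then reduces everything to the scalar recursion $\log y_{n+1}\le K\tau+\theta\log y_n$. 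Your route is shorter and needs less uniformity; the paper's route delivers the estimate for a whole range of exponents at once.

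One justification needs repair. Removing the factor $(1+v_0\tau)^{-1}$ in front of $\frac12\langle P_n,\Delta W_n\rangle$ does not follow pointwise from $(1+v_0\tau)^{-1}\le 1$, because that term has no sign. It is valid after conditioning, since the Gaussian moment generating function you computed is increasing in the scalar multiplying $P_n$. Alternatively, simply keep the factor, as the paper does.
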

\begin{proof}
	It follows from \eqref{eq:Vn+1} that for any $\lambda>0$ and $\tau\in(0,v^{-1}]$, 
	\begin{align}\label{sec4eq3}
		e^{\lambda\mathcal{V}_{n+1}^\tau}\le e^{\lambda(1+v_0\tau)^{-1}\mathcal{V}_{n}^\tau+C_1\lambda\|\Delta W_n\|^2+C_1\lambda\tau+\frac12(1+v_0\tau)^{-1}\lambda\langle P_n, \Delta W_n\rangle}.
	\end{align}
	Since $\Delta W_n\sim \mathcal{N}(0,\tau I)$, it holds that for any $\lambda>0$ with $2C_1\lambda<1/(2\tau)$,
	\begin{align*}
		\E\left[e^{2C_1\lambda\|\Delta W_n\|^2}\right]&=\prod_{i=1}^d\E\left[e^{2C_1\lambda\|\Delta W^i_n\|^2}\right]= (1-4C_1\lambda\tau)^{-\frac{d}{2}}\\
		&\le 	(1+8C_1\lambda\tau)^{\frac{d}{2}}\le e^{4dC_1\lambda\tau},
	\end{align*}
	where we used the fundamental inequality $(1-x)^{-1}\le 1+2x$ for $x\in[0,\frac{1}{2}]$.

	For any $x\in\mbb R^d$, it holds $\LL x,\Delta W_n\RR\sim \mcal N(0,\tau\|x\|^2)$, which gives that for any $\kappa>0$,
	\begin{align}
		\mbb E[e^{\kappa\LL P_n,\Delta W_n\RR}|\mcal F_{t_n}]=\big(\mbb E[e^{\LL \kappa x,\Delta W_n\RR}]\big)\big|_{x=P_n}=e^{\frac12\kappa^2\tau\|P_n\|^2}. \label{sec4eq2}
	\end{align}
	Since $\Delta W_n$ is independent of $\mathscr{F}_{t_n}$, applying the conditional H\"older inequality and \eqref{sec4eq3}--\eqref{sec4eq2}, we obtain that for any $\tau\in(0,v^{-1}]$ and $\lambda>0$ with $\lambda\tau\le 1/(8C_1)$,
	\begin{align} \label{sec4eq4}
		&\E[e^{\lambda\mathcal{V}_{n+1}^\tau}|\mathscr{F}_{t_n}]\le e^{\lambda (1+v_0\tau)^{-1} \mathcal{V}_{n}^\tau+C_1\lambda\tau}\E[e^{C_1\lambda\|\Delta W_n\|^2+\frac12(1+v_0\tau)^{-1}\lambda\langle P_n, \Delta W_n\rangle}|\mathscr{F}_{t_n}]\\
		&\le e^{\lambda (1+v_0\tau)^{-1} \mathcal{V}_{n}^\tau+C_1\lambda\tau}\left(\E[e^{2C_1\lambda\|\Delta W_n\|^2}]\right)^{\frac12}\left(\E[e^{\lambda(1+v_0\tau)^{-1}\langle P_n, \Delta W_n\rangle}|\mathscr{F}_{t_n}]\right)^{\frac12}\notag\\
		&\le e^{\lambda (1+v_0\tau)^{-1} \mathcal{V}_{n}^\tau+\frac14\lambda^2(1+v_0\tau)^{-2}\tau\|P_n\|^2+C_2\lambda\tau},\notag
	\end{align}
	where $C_2:=C_1(1+2d)>0$ is independent of $\tau$.	For any $\tau\in(0,v^{-1}]$ and $\lambda\in(0,\frac{v_0}{4}]$, it follows from
	$(1+v_0\tau)^{-1}\le 1-\frac{v_0}{2}\tau$ and $\mcal V_n^\tau\ge \frac{1}{4}\|P_n\|^2$ that
	\begin{align}\label{sec4eq5}
		&\lambda (1+v_0\tau)^{-1} \mathcal{V}_{n}^\tau+\frac14\lambda^2(1+v_0\tau)^{-2}\tau\|P_n\|^2+C_2\lambda\tau\\
		&\le \lambda(1-\frac{v_0}{4}\tau)\mathcal{V}_{n}^\tau+\frac{\lambda\tau}{4}\big(-v_0 \mcal V_n^\tau+\lambda(1+v_0\tau)^{-2}\|P_n\|^2\big)+C_2\lambda\tau \notag\\
		&\le\lambda(1-\frac{v_0}{4}\tau)\mathcal{V}_{n}^\tau+C_2\lambda\tau.\notag 
	\end{align}
	Combining \eqref{sec4eq4} and \eqref{sec4eq5}, it holds for any $\tau\in(0,v^{-1}]$ and $\lambda\in(0,\frac{v_0}{4}]$ with $\lambda\tau\le 1/(8C_1)$ that
	$\E[e^{\lambda\mathcal{V}_{n+1}^\tau}|\mathscr{F}_{t_n}]\le e^{\lambda(1-\frac{v_0}{4}\tau)\mathcal{V}_{n}^\tau+C_2\lambda\tau},~n=0,1,2,\ldots$
	By iteration, we conclude that for any $\tau\in(0,v^{-1}]$ and $\lambda\in(0,\frac{v_0}{4}]$ with $\lambda\tau\le 1/(8C_1)$,
	\begin{align*}
		\mbb E\big[e^{\lambda\mathcal{V}_{n+1}^\tau}|\mathscr{F}_{t_{0}}\big]=&\E[\cdots\E[\E[e^{\lambda\mathcal{V}_{n+1}^\tau}|\mathscr{F}_{t_n}]|\mathscr{F}_{t_{n-1}}]\cdots|\mathscr{F}_{t_{0}}]\\
		&\le \E[\cdots\E[\E[e^{\lambda(1-\frac{v_0}{4}\tau)^2\mathcal{V}^\tau_{n-1}+C_2\lambda\tau(1-\frac{v_0}{4}\tau)+C_2\lambda\tau}|\mathscr{F}_{t_{n-2}}]]|\mathscr{F}_{t_{0}}]\\
		&\le e^{\lambda(1-\frac{v_0}{4}\tau)^{n+1}\mathcal{V}^\tau_{0}+\sum_{i=0}^n C_2\lambda\tau(1-\frac{v_0}{4}\tau)^i}\le e^{\lambda\mathcal{V}^\tau_0+4C_2\lambda/v_0}.
	\end{align*}
	Finally, the conclusion comes from letting $\lambda=\kappa_2$ and taking the expectation in the above inequality.
\end{proof}

\begin{cor}\label{cor:Pnunif.}
	Let Assumption \ref{asp:V1} hold. Then for any $\tau\in(0,\tau_1]$ and $p\ge1$,
	$$\sup_{n\in \mathbb{N}}\E\left[\|P_n\|^{2p}+\|Q_n\|^{2p}+r_n^{2p}\right]\le K(P_0,Q_0,r_0,p).$$
\end{cor}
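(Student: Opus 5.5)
The corollary follows directly from Lemma \ref{Numer-expinter}, because exponential moments dominate polynomial moments. The plan is first to bound each of $\|P_n\|^{2p}$, $\|Q_n\|^{2p}$ and $r_n^{2p}$ by a power of $\mathcal V^\tau_n$, and then to bound that power of $\mathcal V^\tau_n$ by $e^{\kappa_2\mathcal V^\tau_n}$. For the first step, recall that
$$\mathcal V^\tau(p,q,r)\ge \tfrac14\|p\|^2+\tfrac12 r^2+c_\tau\|q\|^2,\qquad c_\tau:=\tfrac14(1+v\tau)\big(\alpha-\tfrac{v^2}{4}\big)\Big(1-\tfrac{v\tau}{2(1+v\tau)}\Big).$$
We need $c_\tau$ bounded below uniformly in $\tau$. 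For $\tau\in(0,v^{-1}]$ we have $\frac{v\tau}{2(1+v\tau)}\le \frac{v\tau}{2}\le\frac12$ and $1+v\tau\ge1$. Since $\alpha>\frac{v^2}{4}$, this gives $c_\tau\ge c_*:=\frac18(\alpha-\frac{v^2}{4})>0$. Setting $m:=\min\{\frac14,c_*\}$, we obtain
$$\|P_n\|^2+\|Q_n\|^2+r_n^2\le m^{-1}\mathcal V^\tau_n,$$
and hence $\|P_n\|^{2p}+\|Q_n\|^{2p}+r_n^{2p}\le 3m^{-p}(\mathcal V^\tau_n)^p$.

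For the second step, the elementary inequality $x^p\le \big(\frac{p}{e\kappa_2}\big)^p e^{\kappa_2 x}$ holds for $x\ge0$; it comes from maximizing $x^pe^{-\kappa_2x}$. Applying it with $x=\mathcal V^\tau_n$ gives $(\mathcal V^\tau_n)^p\le K(p,\kappa_2)e^{\kappa_2\mathcal V^\tau_n}$. Note that $\kappa_2=v_0/4$ depends only on $v$ and $\beta_0$, not on $\tau$.

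Taking expectations, Lemma \ref{Numer-expinter} gives $\sup_{n\ge1}\E[e^{\kappa_2\mathcal V_n^\tau}]\le K(P_0,Q_0,r_0)$ for all $\tau\in(0,\tau_1]$. The case $n=0$ is deterministic: $\mathcal V^\tau_0\le \frac14\|P_0\|^2+\frac12r_0^2+\frac12(\alpha-\frac{v^2}{4})\|Q_0\|^2$, which is bounded independently of $\tau$ because $\tau\le v^{-1}$. Combining the two steps yields the claimed bound, with a constant depending only on $(P_0,Q_0,r_0,p)$ and the model parameters.

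The only point that needs care is the uniform-in-$\tau$ lower bound on the $\|q\|^2$ coefficient $c_\tau$ of $\mathcal V^\tau$, which uses $\tau\le\tau_1\le v^{-1}$. Everything else is routine.
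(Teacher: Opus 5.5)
Your proof is correct, and it is the argument the paper leaves implicit: the corollary is stated as an immediate consequence of Lemma \ref{Numer-expinter}, using the coercivity of $\mathcal V^\tau$ in $(p,q,r)$, the domination of polynomial moments by $\mathbb{E}[e^{\kappa_2\mathcal V^\tau_n}]$, and the deterministic case $n=0$. One small simplification: the $\|q\|^2$ coefficient equals $\frac14(\alpha-\frac{v^2}{4})(1+\frac{v\tau}{2})\ge\frac14(\alpha-\frac{v^2}{4})$, so its uniform lower bound holds for every $\tau>0$ without using $\tau\le v^{-1}$.
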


%
	
	\subsection{Uniform-in-time error estimate between $r_n$ and $\sqrt{U_\alpha(Q_n)}$} \label{Sec:3.2}

	Another ingredient in the weak convergence analysis of the DSAV method \eqref{explicitmethod}-\eqref{eq:SAVN} is the uniform-in-time error order between the numerical SAV $r_n$ and the square root of the numerical standardized potential $U_\alpha(Q_n)$, which is stated as follows.
	\begin{lem}\label{lem:rn-sqrt}
		Let Assumption \ref{asp:V1} hold and $r_0=\sqrt{U_\alpha(Q_{0})}$. Then there is a constant $C_3>0$ independent of $\tau$ such that
		$\sup_{n\in \mbb N}\E\big[|r_{n}-\sqrt{U_\alpha(Q_{n})}|^2\big]\le C_3\tau^{2}.
		$
	\end{lem}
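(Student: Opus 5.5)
We track the error $e_n:=r_n-\sqrt{U_\alpha(Q_n)}$ and derive a one-step recursion in which the dissipative term $-\frac{v}{4}\beta_0 r_{n+1}$ supplies a contraction factor. This lets the local defects sum geometrically instead of linearly in $n$.

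\emph{Step 1: one-step expansion of $\sqrt{U_\alpha(Q_{n+1})}$.} We Taylor expand $g(q):=\sqrt{U_\alpha(q)}$ around $Q_n$, using $\nabla g(q)=\frac{\nabla V(q)-\alpha q}{2\sqrt{U_\alpha(q)}}$ and \eqref{eq:SAV1Q}, so that $Q_{n+1}-Q_n=\tau(P_{n+1}-\frac v2 Q_{n+1})$. This gives
\begin{align*}
\sqrt{U_\alpha(Q_{n+1})}-\sqrt{U_\alpha(Q_n)}=\frac{\tau}{2\sqrt{U_\alpha(Q_n)}}\langle\nabla V(Q_n)-\alpha Q_n,P_{n+1}-\tfrac v2Q_{n+1}\rangle+R_n,
\end{align*}
where $R_n=\frac12\int_0^1 D^2g(Q_n+\theta(Q_{n+1}-Q_n))(Q_{n+1}-Q_n,Q_{n+1}-Q_n)(1-\theta)\,d\theta\cdot 2$ is the quadratic remainder. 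We then use Condition \ref{condition2} exactly as in the derivation of \eqref{eq:SAV1}, evaluated at $Q_n$, to rewrite $-\frac{\tau v}{4\sqrt{U_\alpha(Q_n)}}\langle\nabla V(Q_n)-\alpha Q_n,Q_n\rangle=-\frac v4\beta_0\tau\sqrt{U_\alpha(Q_n)}-v\tau\zeta_\alpha(Q_n)$. The leftover piece, which involves $Q_{n+1}-Q_n$ in place of $Q_n$, is absorbed into the remainder.

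\emph{Step 2: error recursion.} Subtracting this expansion from \eqref{eq:SAV1R} gives
\begin{align*}
(1+\tfrac v4\beta_0\tau)e_{n+1}=e_n+\rho_n,
\end{align*}
where $\rho_n$ collects $-R_n$, the term $\frac{v\tau}{4\sqrt{U_\alpha(Q_n)}}\langle\nabla V(Q_n)-\alpha Q_n,Q_{n+1}-Q_n\rangle$, and the mismatch $\frac v4\beta_0\tau\big(\sqrt{U_\alpha(Q_{n+1})}-\sqrt{U_\alpha(Q_n)}\big)$. Each of these is $\tau$ times an increment $Q_{n+1}-Q_n$, or is quadratic in that increment, multiplied by polynomially growing factors. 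Since $U_\alpha\ge1$ and $V\in C^\infty_{\rm p}$, the derivatives of $g$ grow at most polynomially. Also $\|Q_{n+1}-Q_n\|\le \tau(\|P_{n+1}\|+\frac v2\|Q_{n+1}\|)$ carries no $\sqrt\tau$ noise term, because noise enters $Q$ only through $P_{n+1}$ multiplied by $\tau$. Hence $\E|\rho_n|^2\le K\tau^4$, uniformly in $n$, by Corollary \ref{cor:Pnunif.} and H\"older's inequality.

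\emph{Step 3: summation.} Iterating with $e_0=0$ gives $e_n=\sum_{k<n}(1+\frac v4\beta_0\tau)^{-(n-k)}\rho_k$. Minkowski's inequality in $L^2(\Omega)$ then yields $\|e_n\|_{L^2}\le \sum_{k}(1+\frac v4\beta_0\tau)^{-(n-k)}K\tau^2\le K\tau^2\cdot\frac{1+\frac v4\beta_0\tau}{\frac v4\beta_0\tau}\le K\tau$, uniformly in $n$. Squaring gives the claimed bound $C_3\tau^2$.

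The main obstacle is Step 1: matching the drift of $r$ exactly with the expansion of $g$ so that every residual is genuinely $O(\tau^2)$ in $L^2$. Two points need care. First, the Condition \ref{condition2} rewriting must be applied at $Q_n$, not $Q_{n+1}$. Second, the Taylor remainder requires polynomial moment bounds at intermediate points $Q_n+\theta(Q_{n+1}-Q_n)$, which are controlled by the bounds on $Q_n$ and $Q_{n+1}$. A naive Gronwall argument would give a constant growing in $n$, so the uniformity rests entirely on the contraction factor $(1+\frac v4\beta_0\tau)^{-1}$.
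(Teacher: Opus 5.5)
Your proof is correct and follows essentially the same route as the paper. You subtract a mean-value expansion of $\sqrt{U_\alpha(Q_{n+1})}$ from \eqref{eq:SAV1R} and use the Condition \ref{condition2} identity to obtain $(1+\frac v4\beta_0\tau)e_{n+1}=e_n+\rho_n$ with $\E|\rho_n|^2\le K\tau^4$, and the contraction factor then gives the uniform $O(\tau^2)$ bound. The differences are cosmetic: the paper applies the identity at $Q_{n+1}$ rather than $Q_n$ (so your remark that it \emph{must} be applied at $Q_n$ is overstated, since either choice leaves an $O(\tau^2)$ mismatch), and it closes with an energy estimate (multiplying by $e_{n+1}$ and using Young) instead of your explicit solution of the linear recursion plus Minkowski; both yield the same bound.
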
		
	\begin{proof}
		Denote $\xi_\alpha(q):=\frac{1}{2\sqrt{U_\alpha(q)}}(\nabla V(q)-\alpha q)$.
		Using \eqref{Ualpha}, \eqref{qVq}, and the definition of $\zeta_\alpha$,	we have the following relation
		\begin{equation}\label{eq:identity}
			\langle \xi_\alpha(q),-\frac{v}{2}q\rangle=-\frac{v}{4}\beta_0\sqrt{U_\alpha(q)}-v\zeta_\alpha(q).
		\end{equation}

		It follows from the mean value theorem, \eqref{eq:SAV1Q}, and \eqref{eq:identity} that
		\begin{align}\label{sec4eq7}
			& \sqrt{U_\alpha(Q_{n+1})}-\sqrt{U_\alpha(Q_{n})}
			=\left\langle \int_0^1\xi_\alpha(\theta Q_{n+1}+(1-\theta)Q_n)\ud\theta,\tau P_{n+1}-\frac{v}{2}\tau Q_{n+1}\right\rangle \\
			&  =\left\langle \int_0^1\xi_\alpha(\theta Q_{n+1}+(1-\theta)Q_n)\ud\theta,\tau P_{n+1}\right\rangle-\tau\frac{v}{4}\beta_0\sqrt{U_\alpha(Q_{n+1})}-v\tau\zeta_\alpha(Q_{n+1}) \notag\\
			&\quad+\left\langle \int_0^1\xi_\alpha(\theta Q_{n+1}+(1-\theta)Q_n)- \xi_\alpha(Q_{n+1})\ud\theta,-\frac{v}{2}\tau Q_{n+1}\right\rangle. \notag
		\end{align}
		From \eqref{eq:SAV1R} we obtain that
		\begin{align}
			&r_{n+1}=r_{n}+\tau\langle\xi_\alpha(Q_n),P_{n+1}\rangle-\tau v\zeta_\alpha(Q_n)-\tau\frac{v}{4}\beta_0r_{n+1}. \label{sec4eq6}
		\end{align}
		Combining \eqref{sec4eq6} and \eqref{sec4eq7} gives
		\begin{equation}\label{sec4eq10}
			r_{n+1}-\sqrt{U_\alpha(Q_{n+1})}- r_{n}+\sqrt{U_\alpha(Q_{n})}= -\tau\frac{v}{4}\beta_0\left(r_{n+1}-\sqrt{U_\alpha(Q_{n+1})}\right)+J_n,
		\end{equation}
		where
		\begin{align*}
			J_n:=&\;\tau\Big\langle \int_0^1\big(\xi_\alpha(Q_n)-\xi_\alpha(\theta Q_{n+1}+(1-\theta)Q_n)\big)\ud\theta, P_{n+1}\Big\rangle+v\tau(\zeta_\alpha(Q_{n+1})-\zeta_\alpha(Q_n))\\
			&\;+\frac{v}{2}\tau\Big\langle \int_0^1\xi_\alpha(\theta Q_{n+1}+(1-\theta)Q_n)- \xi_\alpha(Q_{n+1})\ud\theta, Q_{n+1}\Big\rangle.
		\end{align*} 	
		Multiplying $\big(r_{n+1}-\sqrt{U_\alpha(Q_{n+1})}\big)$ on both sides of \eqref{sec4eq10} gives
		\begin{align*}
			& \frac{1}{2}\left[|r_{n+1}-\sqrt{U_\alpha(Q_{n+1})}|^2-|r_{n}-\sqrt{U_\alpha(Q_{n})}|^2\right]\\
			&\le-\tau\frac{v}{4}\beta_0|r_{n+1}-\sqrt{U_\alpha(Q_{n+1})}|^2+J_n(r_{n+1}-\sqrt{U_\alpha(Q_{n+1})})\\
			&\le-\tau\frac{v}{8}\beta_0|r_{n+1}-\sqrt{U_\alpha(Q_{n+1})}|^2+K\tau^{-1}J_n^2.
		\end{align*}
		By \eqref{eq:SAV1Q} and Corollary \ref{cor:Pnunif.}, it holds that
		$\mbb E\|Q_{n+1}-Q_n\|^p\le K\tau ^p$ for any $p\ge 1$.
		Consequently, from 	Corollary \ref{cor:Pnunif.}, the mean value theorem, and $\xi_\alpha,\zeta_\alpha\in C_{\rm p}^{\infty}(\mbb R^d;\mbb R)$ we deduce that for any $p\ge 1$ and $\theta\in[0,1]$,
		\begin{align*}
			\mbb E|\zeta_\alpha(Q_{n+1})-\zeta_\alpha(Q_{n})|^p+\mbb E\|\xi_\alpha(\theta Q_{n+1}+(1-\theta)Q_n)-\xi_\alpha(Q_i)\|^p\le K\tau ^p, 
		\end{align*}
		where $i\in\{n,n+1\}$.
		Accordingly, we have
		$\mathbb{E}[J_n^2]\le C\tau^4$,
		which implies 
		\begin{align*}
			(1+\frac{v\beta_0}{4}\tau)\E[|r_{n+1}-\sqrt{U_\alpha(Q_{n+1})}|^2] \le \E[|r_{n}-\sqrt{U_\alpha(Q_{n})}|^2] +K\tau^3.
		\end{align*}
		Finally, the conclusion comes by iterating the above inequality and 	$r_0=\sqrt{U_\alpha(Q_0)}$.
	\end{proof}

	\subsection{Regularity of  the Kolmogorov equation}
	Denote $u^\psi(t,p,q):=\\$$\E[\psi(\mathfrak P(t,p,q),\mathfrak Q(t,p,q))]$, $t\ge0$, $(p,q)\in\mbb R^{2d}$, and
	$$B(p,q):=\left(
	\begin{array}{c}
		-\nabla V(q)-\frac{v}{2}p+\frac{v^2}{4} q \\p-\frac{v}{2}q 
	\end{array}
	\right),~G(p,q):=\left(
	\begin{array}{c}
		I \\0
	\end{array}
	\right).$$
	Then	 $u^\psi$ 
	solves the following Kolmogorov equation associated with \eqref{eq:Langevin}:
	\begin{align}\label{eq:Kol}
		\partial_t u^\psi(t,p,q)=\langle \nabla u^\psi(t,p,q),B(p,q)\rangle +\frac12\mathrm{tr}(GG^\top\nabla ^2u^\psi(t,p,q)),\quad t>0
	\end{align}
	with the initial value $u^\psi(0,p,q)=\psi(p,q)$ for $p,q\in\R^d$.
	
	Establishing uniform-in-time weak convergence of the DSAV method via the Kolmogorov equation relies crucially on the exponential decay properties of the solution's partial derivatives, as presented in the following lemma. According to the definition of $D^kf(\xi_1,\ldots,\xi_k)$ for $f\in C^k(\mbb R^d;\mbb R)$, $D^kf$ can be viewed as a tensor, and we use the notation $\|\cdot\|_{\otimes}$ to denote the norm of a tensor. 
	\begin{lem}\label{lem:exp}
		Suppose that Assumption \ref{asp:V0} and Condition \ref{condition1} hold.		
		Assume that $\psi\in C^\infty_{\rm p}(\mbb R^{2d};\mbb R)$, and denote $u^\psi(t, p,q):=\E[\psi(\mathfrak{P}(t,p,q),\mathfrak Q(t,p,q))]$. Then for any integer $m$, there exist an integer $\eta(m)$ and constants $L_3>0$ and $\gamma>0$ such that 
		\begin{align*}
			\|D^m u^\psi(t,p,q)\|_{\otimes}\le L_3(1+\| p\|^{\eta(m)}+\|q\|^{\eta(m)})e^{-\gamma t}\quad \forall~t>0,\;p,\,q\in\R^d.
		\end{align*}
		Here, $D^m u^\psi(t,p,q)$ denotes the $m$th derivative of $u^\psi$ with respect to $(p,q)$. 		
	\end{lem}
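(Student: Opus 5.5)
The estimate has two regimes. For short times it follows from derivative bounds of $u^\psi$ with polynomial weights. For large times it follows from the exponential ergodicity \eqref{eq:PQergodic}, applied to the derivative processes or, more cleanly, through the semigroup property. Throughout we use the relation $u^\psi(t,p,q)=\bar u^{\bar\psi}(t,p-\frac v2 q,q)$ with $\bar\psi(\bar p,\bar q)=\psi(\bar p+\frac v2\bar q,\bar q)$. This linear change of variables preserves $C^\infty_{\rm p}$ and polynomial weights, so it suffices to work with the original system \eqref{eq:Langevinorg}, to which \cite[Theorem 3.2]{MSH02} applies under Assumption \ref{asp:V0}.

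\textbf{Step 1 (finite-time polynomial bounds).} Fix $T_0=1$. Differentiate the flow $(p,q)\mapsto(\bar{\mathfrak P}(t,p,q),\bar{\mathfrak Q}(t,p,q))$ $m$ times; the derivatives solve the linear variational equations driven by $\nabla^2V$ and higher derivatives of $V$ along the trajectory. Since $V\in C^\infty_{\rm p}$, and since Lemma \ref{expintergal} with the subsequent Remark gives uniform-in-time exponential integrability and hence all polynomial moments, Gronwall-type estimates yield, for $t\in[0,T]$,
\[
\E\|D^k X(t)\|^{s}\le K(T,k,s)\big(1+\|p\|^{\eta}+\|q\|^{\eta}\big).
\]
This needs care, because $\nabla^2V$ is unbounded. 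We would use the standard trick of bounding $\exp\big(\int_0^t\|\nabla^2V(\bar{\mathfrak Q}(s))\|\ud s\big)$ only for the first variation. A cleaner route is the mild form in which $\nabla^2V$ is treated as a perturbation, combined with the exponential moment, or the Lyapunov-type bound $\langle \nabla^2 V(q)\xi,\xi\rangle\ge -c\|\xi\|^2$, which holds since $V$ is a polynomial-type confining potential. Expanding $D^m\psi(X(t))$ by the chain rule then gives $\|D^mu^\psi(t)\|_\otimes\le K(1+\|p\|^\eta+\|q\|^\eta)$ on $[0,2]$.

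\textbf{Step 2 (decay via the semigroup).} For $t\ge 2$ write $u^\psi(t)=S_1\big(S_{t-1}\psi-\widetilde\mu(\psi)\big)$. Subtracting the constant $\widetilde\mu(\psi)$ does not change derivatives of order $m\ge1$; the case $m=0$ is immediate from \eqref{eq:PQergodic} only after centering, so for $m=0$ the statement is read for centered $\psi$. Set $g_t:=S_{t-1}\psi-\widetilde\mu(\psi)$. By \eqref{eq:PQergodic} and Remark \ref{remark1}, $|g_t(p,q)|\le K\widetilde H(p,q)^le^{-k_l(t-1)}$, but only $g_t$ itself is controlled, not its derivatives. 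We therefore need a smoothing estimate for $S_1$: derivatives of $S_1g$ bounded by $\sup|g|/\widetilde H^{l}$ times a polynomial weight. This is the hypoelliptic (H\"ormander) regularization, which can be obtained via Malliavin integration by parts, using that $(\bar{\mathfrak P},\bar{\mathfrak Q})$ satisfies the parabolic H\"ormander condition with Malliavin covariance inverse moments of polynomial growth. This gives
\[
\|D^m S_1 g\|_\otimes(p,q)\le K\big(1+\|p\|^{\eta}+\|q\|^{\eta}\big)\,\sup_{x}\frac{|g(x)|}{\widetilde H(x)^{l}}.
\]
Combined with the decay of $g_t$, this yields the claim with $\gamma=k_l$.

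\textbf{Main obstacle.} The smoothing estimate in Step 2 is the hard part: we need derivative bounds of $S_1g$ in terms of the weighted sup norm of $g$, with only polynomial dependence on $(p,q)$, despite the superquadratic drift. We would first try the Bismut--Elworthy--Li/Malliavin formula, $D^mS_1g=\E[g(X(1))\,\Pi_m]$ with a weight $\Pi_m$ whose $L^2$ norm grows polynomially. This requires inverse moments of the Malliavin matrix for the degenerate noise, which follow from the quantitative H\"ormander argument since the bracket $[\partial_p,B]$ spans the $q$-directions with constant coefficients. If that route becomes too technical, we would instead cite the established result in \cite{Talay02}-type analyses, or use \cite[Theorem 3.2]{MSH02} adapted to derivative processes, and accept the estimate as known for this class of potentials.
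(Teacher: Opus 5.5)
Your reduction to \eqref{eq:Langevinorg} through the linear change of variables is exactly the paper's first step (cf.\ \eqref{eq:upsi}). Your caveat about $m=0$ is also correct: for $\psi\equiv 1$ one has $u^\psi\equiv 1$, so the bound can only hold for $m\ge 1$, or for $u^\psi-\mu(\psi)$ when $m=0$. Only $m=1,2$ are used in Appendix A.

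Beyond that, the proposal has a genuine gap. It reduces the lemma to two estimates that are at least as hard as the lemma, and proves neither.

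\textbf{Step 1.} You need moment bounds for the derivative flow with \emph{polynomial} dependence on $(p,q)$. The tools you suggest do not give this.
\begin{itemize}
\item The semi-convexity bound $\langle\nabla^2V(q)\xi,\xi\rangle\ge -c\|\xi\|^2$ is false in general for confining polynomials when $d\ge2$. Take $V(q)=q_1^4+q_2^4-\frac{19}{10}q_1^2q_2^2$. It satisfies $V\ge\frac1{20}(q_1^4+q_2^4)$, Assumption \ref{asp:V0} and Condition \ref{condition1}, yet $\partial^2_{q_2}V(R,0)=-\frac{19}{5}R^2$.
\item Even where semi-convexity holds, it does not control the kinetic variational system. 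There
\[
\tfrac{\ud}{\ud t}\tfrac12\big(\|\xi_p\|^2+\|\xi_q\|^2\big)=-v\|\xi_p\|^2-\langle\nabla^2V(\bar{\mathfrak Q})\xi_q,\xi_p\rangle+\langle\xi_p,\xi_q\rangle,
\]
and the cross term has no sign.
\item Bounding $\exp\big(\int_0^t\|\nabla^2V(\bar{\mathfrak Q}(s))\|\ud s\big)$ via Lemma \ref{expintergal} yields weights like $e^{\kappa H(p,q)}$, not $1+\|p\|^\eta+\|q\|^\eta$.
\end{itemize}

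\textbf{Step 2.} The smoothing estimate for $S_1$ in weighted sup norm needs two ingredients: inverse moments of the Malliavin covariance matrix, and moments of the Malliavin weights. Both must grow only polynomially in the initial point, for a degenerate system with superquadratic drift. You explicitly leave this open. Your semigroup splitting $u^\psi(t)=S_1(S_{t-1}\psi-\widetilde\mu(\psi))$ is logically fine, but its entire content sits in this unproved estimate.

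\textbf{What the paper does.} It does not attempt to reprove these estimates. It identifies \eqref{eq:Langevinorg} as the stochastic Hamiltonian dissipative system of \cite{T02}, with $\mathbb H(x,y)=\frac12\|y\|^2+V(x)$ and $\mathbb F\equiv v$. It then checks Hypothesis 1.1 of \cite{T02} using:
\begin{itemize}
\item the Lyapunov inequality $\mathcal L(\mathbb H+R)\le -a_1(\mathbb H+R)+d_1$, with $R(x,y)=\frac v2\langle x,y\rangle+\frac{v^2}{4}\|x\|^2+1$, from \cite[Lemma 3.3]{MSH02};
\item Assumption \ref{asp:V0};
\item Condition \ref{condition1}, which gives the quadratic lower bound on $V$.
\end{itemize}
Finally it applies \cite[Theorem 3.1]{T02} to $\bar u^\phi$ and transfers the bound through \eqref{eq:upsi}.

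Your fallback is in effect this citation. To count as a proof, it has to be made concrete by verifying the hypotheses of the cited theorem, rather than ``accepting the estimate as known''. Steps 1--2 outline the kind of argument such a theorem requires but leave its technical core unproved.
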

	\begin{proof}
		Introduce the function $\phi(p,q):=\psi(p+\frac{v}{2}q,q)$, $p,q\in\R^d$. Clearly, $\phi\in C^\infty_{\rm p}(\mbb R^{2d};\mbb R)$. Furthermore, define $\bar u^\phi(t, p,q):=\E[\phi(\bar{\mathfrak{P}}(t, p,q),\bar{\mathfrak Q}(t, p,q))]$ for $t\ge 0$ and $(p,q)\in\mbb R^{2d}$.
		Noting that $\mathfrak P(t,p,q)=\bar{\mathfrak{P}}(t,p-\frac{v}{2}q,q)+\frac{v}{2}\bar{\mathfrak Q}(t, p-\frac{v}{2}q,q)$ and $\mathfrak Q(t,p,q)=\bar{\mathfrak Q}(t,p-\frac{v}{2}q,q)$,
		we have 
		\begin{align}\label{eq:upsi}
			u^{\psi}(t,p,q)=\bar u^\phi(p-\frac{v}{2}q,q)\quad\forall~(p,q)\in\mbb R^{2d}.
		\end{align} 
		
		We note that \eqref{eq:Langevinorg} can be written as \cite[Eq.\ (1.1)]{T02} with $\mbb H(x,y)=\frac{1}{2}|y|^2+V(x)$ and $\mbb F(x,y)\equiv v$. It was shown in \cite[Lemma 3.3]{MSH02} that $\mcal L(\mbb H+ R)\le -a_1(\mbb H+R)+d_1$ for two constants $a_1,d_1>0$, where $\mcal L$ is the infinitesimal generator of \eqref{eq:Langevinorg} and $R(x,y)=\frac{v}{2}\LL x,y\RR+\frac{v^2}{4}\|x\|^2+1$. This, combined with Assumption \ref{asp:V0} and Condition \ref{condition1}, ensures that \cite[Hypothesis 1.1]{T02} is satisfied by our setting. 	According to \cite[Theorem 3.1]{T02},
		for any integer $m$, there exist a positive integer $\eta(m)$ and $K,\gamma>0$ such that 
		\begin{align*}
			\|D^m \bar u^\phi(t, \bar p,q)\|_{\otimes}\le K(1+\|\bar p\|^{\eta(m)}+\|q\|^{\eta(m)})e^{-\gamma t},\quad \forall~t>0,\; p,\,q\in\R^d.
		\end{align*}
		The proof is finished by the above relation and \eqref{eq:upsi}.
	\end{proof}

	With these preparations in place, we are now in a position to prove the uniform-in-time weak convergence of the DSAV method \eqref{explicitmethod}-\eqref{eq:SAVN}.
	\begin{lem}\label{weakorder}
		Let Assumptions \ref{asp:V1} and \ref{asp:V0} hold. Consider the numerical solution $\{(P_n,Q_n)\}_{n\ge0}$ generated by \eqref{eq:SAVN} with $r_0=\sqrt{U_\alpha(Q_0)}$. Then for any $(P_0,Q_0)\in\mbb R^{2d}$ and $\psi\in C^\infty_{\rm p}(\mbb R^{2d};\mbb R)$,  there is a constant $C_4>0$ independent of $\tau$ such that
		$$\sup_{N\in\mbb N}\big|\E[\psi(P_N,Q_N)]-\E[\psi(\mathfrak P(t_N,P_0,Q_0),\mathfrak Q(t_N,P_0,Q_0))]\big|\le C_4\tau.$$
	\end{lem}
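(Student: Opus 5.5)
We follow the standard Talay-type telescoping argument based on the Kolmogorov equation \eqref{eq:Kol}, with the exponential decay of Lemma \ref{lem:exp} making the sum of local errors uniform in $N$. Write $u(t,\cdot)=u^\psi(t,\cdot)$ and decompose
\begin{align*}
\E[\psi(P_N,Q_N)]-u(t_N,P_0,Q_0)=\sum_{n=0}^{N-1}\E\big[u(t_{N-n-1},P_{n+1},Q_{n+1})-u(t_{N-n},P_n,Q_n)\big],
\end{align*}
using $u(0,\cdot)=\psi$. By the Markov property of the exact flow, $u(t_{N-n},P_n,Q_n)=\E[u(t_{N-n-1},\mathfrak P(\tau,P_n,Q_n),\mathfrak Q(\tau,P_n,Q_n))|\mscr F_{t_n}]$. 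Each summand is therefore a one-step weak error for the test function $u(s,\cdot)$ with $s=t_{N-n-1}$. The goal is to bound each summand by $K\tau^2 e^{-\gamma s}(1+\text{moments})$, because $\sum_n \tau^2 e^{-\gamma t_{N-n-1}}\le K\tau$ uniformly in $N$.

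For the one-step error, fix $n$ and condition on $\mscr F_{t_n}$. We Taylor expand $u(s,\cdot)$ to third order around $(P_n,Q_n)$ for both the numerical increment and the exact increment over $[t_n,t_{n+1}]$; the exact increment can alternatively be handled by It\^o's formula applied to $u(s,\cdot)$. We then compare conditional moments of the increments. For the numerical step, \eqref{eq:Pn+1}–\eqref{eq:Qn+1} give
\begin{align*}
P_{n+1}-P_n=\tau\Big[\tfrac{r_{n+1}}{\sqrt{U_\alpha(Q_n)}}(-\nabla V(Q_n)+\alpha Q_n)-\tfrac v2P_n+(\tfrac{v^2}{4}-\alpha)Q_n\Big]+\Delta W_n+R_n,
\end{align*}
where $R_n$ has $L^p$ norm $O(\tau^{3/2})$ and its $\Delta W_n$-linear part has zero conditional mean. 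The same structure holds for $Q_{n+1}-Q_n=\tau(P_n-\frac v2 Q_n)+O(\tau^{3/2})$.

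The crucial substitution is
\begin{align*}
r_{n+1}=\sqrt{U_\alpha(Q_n)}+\big(r_n-\sqrt{U_\alpha(Q_n)}\big)+(r_{n+1}-r_n).
\end{align*}
Here $r_{n+1}-r_n=\tau\langle\xi_\alpha(Q_n),P_{n+1}\rangle+O(\tau)$, and the part linear in $\Delta W_n$ is centered. With this substitution, the leading drift matches $B(P_n,Q_n)\tau$. One residual term is $\tau(r_n-\sqrt{U_\alpha(Q_n)})\xi$-type, whose $L^2$ norm is $O(\tau^2)$ by Lemma \ref{lem:rn-sqrt}. The other residual terms are $O(\tau^2)$ in conditional mean, or centered. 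The second moments agree with $\tau I$ on the $p$-block up to $O(\tau^2)$, and the third moments are $O(\tau^2)$.

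Every Taylor coefficient is a derivative $D^m u(s,\cdot)$ with $m\le 4$, evaluated at $(P_n,Q_n)$ or at intermediate points. By Lemma \ref{lem:exp}, these are bounded by $L_3(1+\|p\|^{\eta}+\|q\|^{\eta})e^{-\gamma s}$. Cauchy–Schwarz then combines these bounds with the uniform moment bounds of Corollary \ref{cor:Pnunif.} for the numerical solution and of Lemma \ref{expintergal} for the exact solution started at $(P_n,Q_n)$. The result is
\begin{align*}
\big|\E[u(s,P_{n+1},Q_{n+1})]-\E[u(s+\tau,P_n,Q_n)]\big|\le K\tau^2e^{-\gamma s},
\end{align*}
and summing over $n$ gives $C_4\tau$ uniformly in $N$.

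The main obstacle is the treatment of $r_{n+1}$ in the drift. The term $\tau(r_{n+1}-\sqrt{U_\alpha(Q_n)})\xi_\alpha(Q_n)$ is only $O(\tau^{3/2})$ pathwise, because $r_{n+1}-r_n$ contains $\tau\langle\xi_\alpha,\Delta W_n\rangle$. Its first-order contribution to the weak error cancels only because that term is centered given $\mscr F_{t_n}$. The remaining bias from $r_n-\sqrt{U_\alpha(Q_n)}$ must be controlled uniformly in time, and this is exactly what Lemma \ref{lem:rn-sqrt} provides via the dissipative term. We would first expand $r_{n+1}$ explicitly through \eqref{eq:rn+1-old}, verifying $G_0(Q_n)^{-1}=1+O(\tau)$ with polynomial-growth remainders, and then isolate the martingale part so the cancellation is visible. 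All remaining remainders are polynomially bounded, so the uniform moments close the estimate. Theorem \ref{theo:weakorder} then follows by applying this result to $\psi(p,q)=\varphi(p-\frac v2q,q)$.
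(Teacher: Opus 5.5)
Your plan is correct, but it follows a different route from the paper. You use the discrete Talay-type telescoping sum and bound each one-step error $\E[u^\psi(s,Y_{n+1})]-\E[u^\psi(s+\tau,Y_n)]$ by third-order Taylor expansion and matching of conditional moments.

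The paper does not compare moments step by step. It first eliminates $r_{n+1}$ via \eqref{eq:rn+1-old} and writes the scheme as $Y_{n+1}=Y_n+\tau f_n+g_n\Delta W_n$, with $f_n,g_n$ being $\mathscr F_{t_n}$-measurable. It then takes the frozen-coefficient interpolant $Y^\tau(s)$ and applies It\^o's formula to $u^\psi(t_N-s,Y^\tau(s))$ together with \eqref{eq:Kol}. This reduces the error to integrals of $\langle\nabla u^\psi,f_n-B(Y^\tau(s))\rangle$ and $\frac12\mathrm{tr}\big((g_ng_n^\top-GG^\top)\nabla^2u^\psi\big)$. The bound $\E\|f_n-B(Y_n)\|^p\le K\tau^p$ for $p<2$ comes from the same bias $r_n-\sqrt{U_\alpha(Q_n)}$ you single out (Lemma \ref{lem:rn-sqrt}). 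The only centering argument is for the stochastic integral in $B(Y^\tau(s))-B(Y_n)$.

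The paper's route is more economical. It needs only $D^1u^\psi$, $D^2u^\psi$ and moments of the numerical interpolant, and never runs the exact flow from the random point $(P_n,Q_n)$. Your route needs $D^mu^\psi$ up to $m=4$, which Lemma \ref{lem:exp} supplies. If you expand the exact increment, you also need moments of $(\mathfrak P(\tau,y),\mathfrak Q(\tau,y))$ that are \emph{polynomial} in $y$.

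Lemma \ref{expintergal} with fixed $\kappa_1$ does not give these directly. You would have to integrate $e^{\kappa_1H(P_n,Q_n,\sqrt{U_\alpha(Q_n)})}$, which nothing controls: $U_\alpha$ is superquadratic, and Lemma \ref{Numer-expinter} only bounds $e^{\kappa_2\mathcal V^\tau_n}$. Either of the following fixes works:
\begin{itemize}
\item rerun that proof with $\kappa=\min\{\kappa_1,1/H(X(0))\}$, or apply It\^o's formula to $H^k$ using \eqref{eq:Lext};
\item bypass the exact flow via $u^\psi(s+\tau,y)-u^\psi(s,y)=\tau\mathcal Lu^\psi(s,y)+\int_s^{s+\tau}(s+\tau-\theta)\mathcal L^2u^\psi(\theta,y)\,\ud\theta$, where $\mathcal L$ is the generator of \eqref{eq:Langevin}.
\end{itemize}

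Your stated ``main obstacle'' is misdiagnosed, though harmlessly. Since $r_{n+1}-r_n=O(\tau)$ in every $L^p$, you have $\tau(r_{n+1}-\sqrt{U_\alpha(Q_n)})\xi_\alpha(Q_n)=\tau(r_n-\sqrt{U_\alpha(Q_n)})\xi_\alpha(Q_n)+O(\tau^2)$ already in $L^p$. Its $\Delta W_n$-part is $O(\tau^{5/2})$, not $O(\tau^{3/2})$, so no centering is needed there. In the paper this piece is just the correction $-2\tau^2G_0(Q_n)^{-1}G_2(Q_n)G_2(Q_n)^\top$ inside $g_n$. The only genuine difficulty is the uniform-in-time bias $r_n-\sqrt{U_\alpha(Q_n)}$, and you handle that correctly.
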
	
	Based on Corollary \ref{cor:Pnunif.} and Lemmas \ref{lem:rn-sqrt}–\ref{lem:exp}, Lemma \ref{weakorder} can be proved via the standard weak error decomposition based on the Kolmogorov equation. For the sake of completeness, we provide the detailed proof in the appendix.
	
	\emph{Proof of Theorem \ref{theo:weakorder}.} For any given $\varphi\in C^\infty_{\rm p}(\mbb R^{2d};\mbb R)$, denote $\psi(p,q)=\varphi(p-\frac{v}{2}q,q)$, $(p,q)\in\mbb R^{2d}$. Since 
	$
	\bar{\mathfrak P}(t,p,q)+\frac{v}{2}\bar{\mathfrak Q}(t,p,q)=\mathfrak P(t,p+\frac{v}{2}q,q)$ and $\bar{\mathfrak Q}(t,p,q)=\mathfrak Q(t,p+\frac{v}{2}q,q),
	$
	we have 
	$\E[\varphi(\bar{\mathfrak P}(t_n,p,q),\bar{\mathfrak Q}(t_n,p,q))]
	=\E\big[\psi\big(\mathfrak P(t_n,p+\frac{v}{2}q,q),\mathfrak Q(t_n,p+\frac{v}{2}q,q)\big)\big]$.
	Furthermore, $\E[\varphi(\bar{\mathfrak{P}}_n,\bar{\mathfrak{Q}}_n)]=\E[\varphi(P_n-\frac{v}{2}Q_n,Q_n)]=\E[\psi(P_n,Q_n)]$. Combining the above formulas and Lemma \ref{weakorder}, we complete the proof. 	 \hfill $\square$

	We remark that the weak convergence can also be used in analyzing the complexity of the Monte Carlo and multilevel Monte Carlo methods (cf.\ \cite{GM08,DHSZ23}).

	\section{Proof of Theorem   \ref{thm:density}} \label{Sec:aapromeasure} In this section, we present the proof of Theorem   \ref{thm:density}.
	
	\emph{Proof of Theorem \ref{thm:density}.}
	Taking \eqref{eq:Qn+1}--\eqref{eq:rn+1-old} into account,
	there is a smooth map $g:\R^{2d}\times \R\times\R^{2d}\to \R^{2d}$ such that $(P_{n+2}^\top,Q_{n+2}^\top)^\top=g(P_n,Q_n,r_n,\Delta W_n,\Delta W_{n+1})=:g_{P_n,Q_n,r_n}(\Delta W_n,\Delta W_{n+1})$. 
	
	\textbf{Step 1.}
	In this step, we show that for any fixed $p,q,r$, the map $g_{p,q,r}:\R^{2d}\to\R^{2d}$ is invertible and its Jacobi matrix is invertible, namely,
	$\det\left(\frac{\partial g_{p,q,r}(x,y)}{\partial (x,y)}\right)\ne 0.$
	It suffices to show that for any fixed $P_0,Q_0,r_0$, the Wiener increments
	$\Delta W_0,\Delta W_1$ can be uniquely determined by $(P_2,Q_2,P_0,Q_0,r_0)$, and
	\begin{equation}\label{eq:Dg}
		\det\left(\frac{\partial g_{P_0,Q_0,r_0}(\Delta W_0,\Delta W_1)}{\partial (\Delta W_0,\Delta W_1)}\right)\ne 0.
	\end{equation}
	By \eqref{eq:Qn+1}, we can determine $Q_1$ and $P_1$ via
	$Q_1=(1+\frac{v}{2}\tau)Q_{2}-\tau P_{2}$ and $ P_{1}=\tau^{-1}((1+\frac{v}{2}\tau)Q_{1}-Q_0)$, respectively.
	Utilizing \eqref{eq:Pn+1new}, we have
	\begin{align*}
		\Delta W_0=\left(-2\tau^2G_0(Q_0)^{-1} G_2(Q_0) G_2(Q_0)^\top+ f_{v,\tau}I\right)^{-1} \left(P_1-P_0-\tau\mathcal{P}(P_0,Q_0,r_0)\right),
	\end{align*}
	which along with \eqref{eq:rn+1-old} determines $r_1$, i.e.,
	\begin{align}
		r_{1}=G_0(Q_0)^{-1} \tau G_2(Q_0)^\top \Delta W_0+G_0(Q_0)^{-1}r_0+G_0(Q_0)^{-1}G_1(Q_0,P_0).
	\end{align}
	Finally, since $r_1, P_1,Q_1$ are uniquely determined by $P_2$ and $Q_2$, $\Delta W_1$ can be uniquely determined by $P_2$ and $Q_2$ through \eqref{eq:Pn+1new}. In fact, the above procedure implies that $(\Delta W_0,\Delta W_1):=g^{-1}_{P_0,Q_0,r_0}(P_2,Q_2)$ is smooth with respect to $(P_0,Q_0,r_0,P_2,Q_2)$.
	Next, we show that the Jacobi matrix $Dg_{P_0,Q_0,r_0}$ of $g_{P_0,Q_0,r_0}$ is invertible. 
	It follows from \eqref{eq:Qn+1} that                   	\begin{align*}
		&Dg_{P_0,Q_0,r_0}( \Delta W_0, \Delta W_1):=\left[\begin{array}{cc}\frac{\partial P_2}{\partial \Delta W_0} & \frac{\partial P_2}{\partial \Delta W_1} \\\frac{\partial Q_2}{\partial \Delta W_0} & \frac{\partial Q_2}{\partial \Delta W_1}\end{array}\right]\\
		&=\left[\begin{array}{cc}\frac{\partial P_2}{\partial \Delta W_0} & \frac{\partial P_2}{\partial \Delta W_1} \\\!\!\!(1+\frac{v}{2}\tau)^{-1}\frac{\partial Q_1}{\partial \Delta W_0}+\tau(1+\frac{v}{2}\tau)^{-1}\frac{\partial P_{2}}{\partial \Delta W_0} & (1+\frac{v}{2}\tau)^{-1}\frac{\partial Q_1}{\partial \Delta W_1}+\tau(1+\frac{v}{2}\tau)^{-1}\frac{\partial P_{2}}{\partial \Delta W_1}\!\!\! \end{array}\right].
	\end{align*}
	Noting that $\frac{\partial Q_0}{\partial \Delta W_0}=\frac{\partial Q_1}{\partial \Delta W_1}=0$ and using \eqref{eq:Qn+1}, one has		
	\begin{align}\label{eq:Dg12}\notag
		\det(Dg_{P_0,Q_0,r_0}( \Delta W_0, \Delta W_1))&=\det\left(\left[\begin{array}{cc}\frac{\partial P_2}{\partial \Delta W_0} & \frac{\partial P_2}{\partial \Delta W_1} \\\left(1+\frac{v}{2}\tau\right)^{-1}\frac{\partial Q_1}{\partial \Delta W_0}& 0\end{array}\right]\right)\\\notag
		&=\det\left(\left[\begin{array}{cc}\frac{\partial P_2}{\partial \Delta W_0} & \frac{\partial P_2}{\partial \Delta W_1} \\\tau\left(1+\frac{v}{2}\tau\right)^{-2}\frac{\partial P_{1}}{\partial \Delta W_0}& 0\end{array}\right]\right)\\
		&=(-1)^d\tau^d\left(1+\frac{v}{2}\tau\right)^{-2d}\det\left( \frac{\partial P_2}{\partial \Delta W_1}\right)\det\left( \frac{\partial P_1}{\partial \Delta W_0}\right).
	\end{align}
	According to \eqref{eq:Pn+1new} and the positivity of $G_0(Q_n)$, we have
	\begin{align*}
		\det\left( \frac{\partial P_{n+1}}{\partial \Delta W_n}\right)=\det\Big(-2\tau^2G_0(Q_n)^{-1} G_2(Q_n) G_2(Q_n)^\top+ f_{v,\tau}I\Big) .
	\end{align*}
	By \eqref{eq:G0} and \eqref{eq:G2}, it holds that
	\begin{align*}
		G_3^n:=&\;-2\tau^2G_0(Q_n)^{-1} G_2(Q_n)G_2(Q_n)^\top+ f_{v,\tau}I\\
		=&\;-\left(1+\frac{v}{4}\tau\beta_0+\frac{\tau^2f_{v,\tau}}{2U_\alpha(Q_n)} \|\nabla V(Q_n)-\alpha Q_n\|^2\right)^{-1}\\
		&\; \times\frac{ \tau^2f_{v,\tau}^2}{2U_\alpha(Q_n)}(\nabla V(Q_n)-\alpha Q_n)(\nabla V(Q_n)-\alpha Q_n)^\top+f_{v,\tau}I.
	\end{align*}
	Then the matrix $G_3^n$ has one eigenvalue	
	\begin{equation*}
		f_{v,\tau}\left(1+\frac{v}{4}\tau\beta_0+\frac{\tau^2f_{v,\tau}}{2U_\alpha(Q_n)} \|\nabla V(Q_n)-\alpha Q_n\|^2\right)^{-1}\times
		\left(1+\frac{v}{4}\tau\beta_0\right),
	\end{equation*}
	and the other $(d-1)$ eigenvalues are all $f_{v,\tau}$.
	Hence $\det\left( \frac{\partial P_{n+1}}{\partial \Delta W_n}\right)\neq 0$ for $n\in\mbb N$,
	which, in combination with \eqref{eq:Dg12}, completes the proof of \eqref{eq:Dg}.
	
	\textbf{Step 2.} In this step, we show the existence and positivity of the density of $(P_{n},Q_{n})$ for $n\ge2$.
	For any $f\in\mscr B_b(\R^{2d};\R)$, by the change of variables, we have
	\begin{align*}
		&\mbb E (f(P_{n+2},Q_{n+2}))=		\E\big[\E(f(g(P_n,Q_n,r_n,\Delta W_n,\Delta W_{n+1}))|\mathcal{F}_{t_n})\big]\\
		&\!=\E\int_{\R^{2d}}f(g_{P_n,Q_n,r_n}(x,y))\rho_{(\Delta W_{n},\Delta W_{n+1})}(x,y)\ud x\ud y\\
		&\!=\E\int_{\R^{2d}}\!f(p,q)\rho_{(\Delta W_{n},\Delta W_{n+1})}(g^{-1}_{P_n,Q_n,r_n}(p,q))\frac{1}{|\det(Dg_{P_n,Q_n,r_n}(g_{P_n,Q_n,r_n}^{-1}(p,q)))|}\ud p\ud q,
	\end{align*}
	where $\rho_{(\Delta W_n,\Delta W_{n+1})}$ denotes the density of $(\Delta W_n,\Delta W_{n+1})$. 
	Hence, $(P_{n+2},Q_{n+2})$ admits the density function $\rho_{(P_{n+2},Q_{n+2})}:\R^{2d}\to \R$, $(p,q)\mapsto 	\rho_{(P_{n+2},Q_{n+2})}(p,q)$ with
	\begin{align*}
		&\rho_{(P_{n+2},Q_{n+2})}(p,q)\\&=\E[\rho_{(\Delta W_{n},\Delta W_{n+1})}(g^{-1}_{P_n,Q_n,r_n}(p,q))|\det(Dg_{P_n,Q_n,r_n}(g_{P_n,Q_n,r_n}^{-1}(p,q)))|^{-1}].
	\end{align*}
	For any nonempty open set $U\subseteq\R^{2d}$ and $(P^\top,Q^\top,r)^\top\in\R^{2d+1}$,
	since $g_{P,Q,r}:\R^{2d}\to \R^{2d}$ is a continuous bijection, $g_{P,Q,r}^{-1}(U)$ is a nonempty open set. This yields that 
	\begin{small}	\begin{equation*}
			\mathbb{P}\{(P_{n+2}^\top,Q_{n+2}^\top)^\top\in U\mid P_n=P,Q_n=Q,r_n=r\}=\mathbb{P}\{(\Delta W_n^\top,\Delta W_{n+1}^\top)^\top\in g_{P,Q,r}^{-1}(U)\}>0.
	\end{equation*}	\end{small}	
	Consequently, by the independence of $(\Delta W_n^\top,\Delta W_{n+1}^\top)^\top$ and $X_n:=(P_n^\top,Q_n^\top,r_n)^\top$, we have that for any nonempty open set $U\subseteq\R^{2d}$,
	\begin{align*}
		\mathbb{P}\{(P_{n+2}^\top,Q_{n+2}^\top)^\top\in U\}=\int_{\R^{2d+1}}\mathbb{P}\{(\Delta W_n^\top,\Delta W_{n+1}^\top)^\top\in g_{P,Q,r}^{-1}(U)\}\ud \mathbb{P}\circ X_n^{-1}>0,
	\end{align*}
	which, together with the outer regularity of the Lebesgue measure, indicates that $\rho_{(P_{n+2},Q_{n+2})}$ is $Leb(\mbb R^{2d})$-a.e. positive via proof by contradiction.  Thus, the proof of Theorem \ref{thm:density} is complete. \hfill $\square$             
	%
	\begin{rem} \label{rem:num-PDF}
		We remark that the law of $(P_1,Q_1)$ is not absolutely continuous with respect to $Leb(\mbb R^{2d})$, and thus $(P_1,Q_1)$ does not admit a density function. In fact, it follows from \eqref{eq:SAV1Q} that $\mbb P\{(P_1,Q_1)\in S\}=1$, with $S=\{(p,q)\in\mbb R^{2d}\,:\, (1+\frac{v\tau}{2})q-\tau p=Q_0\}$, which means that the law of $(P_1,Q_1)$ is singular with respect to $Leb(\mbb R^{2d})$ due to $Leb(\mbb R^{2d})(S)=0$. 
	\end{rem}                \begin{rem}\label{rem:exact-PDF}
		According to Lemma \ref{equivalent}, for any initial value $(P(0),Q(0),r(0))\in \mathcal{M}=\{(p,q,r)\in\R^{2d+1}: r=\sqrt{U_\alpha(q)}\}$, the solution $(P(t),Q(t),r(t))$ of \eqref{eq:SAV1} lies in $\mathcal{M}$, for $\mbb P$-almost sure $\omega\in\Omega$ and $t\ge 0$. In this case, for any $t\ge 0$, the law of $(P(t),Q(t),r(t))$ does not have a density function. Thus, the vector fields of \eqref{eq:SAV1} do not satisfy the parabolic H\"ormander condition. 
	\end{rem}       	
	
	It follows from Theorem \ref{thm:density} and \eqref{explicitmethod} that for any $n\ge 2$, $(\bar{\mathfrak{P}}_n,\bar{\mathfrak{Q}}_n)$ admits a density function. Hence, we obtain an equivalent form of \eqref{sec5eq1} via density functions.  Recall that $\widetilde{\rho}$ is the density function of $\widetilde{\mu}$ given in Section \ref{Sec:1.1}.
	\begin{cor}\label{cor:appromeasure2}
		Let Assumptions \ref{asp:V1} and \ref{asp:V0} hold and $(p,q)\in\mbb R^{2d}$ be given. Consider the numerical solution $\{(\bar{\mathfrak{P}}_n,\bar{\mathfrak{Q}}_n)\}_{n\ge 0}$ generated by the DSAV method \eqref{explicitmethod}-\eqref{eq:SAVN} with the initial value $(P_0,Q_0,r_0)=(p+\frac{v}{2}q,q,\sqrt{U_\alpha(q)})$. Then for any $n\ge 2$, $(\bar{\mathfrak{P}}_n,\bar{\mathfrak{Q}}_n)$ admits a density function, denoted by $\rho_n$. In addition, for any $\phi\in C^\infty_{\rm p}(\mbb R^{2d};\mbb R)$, 
		\begin{align*}
			\Big|\int_{\R^{2d}}\phi(p,q)\big(\rho_n(p,q)-\widetilde{\rho}(p,q)\big)\,\ud p\ud q\Big|\le C(e^{-K(\phi)t_n}+\tau),\quad n\ge 2.
		\end{align*}
	\end{cor}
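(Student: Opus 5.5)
The plan is to combine Theorem \ref{thm:density}, transported through the linear map \eqref{explicitmethod}, with Corollary \ref{theo:appromeasure1}, and then rewrite both expectations as integrals against densities.

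\emph{Existence of $\rho_n$.} By Theorem \ref{thm:density}, for every $n\ge 2$ the vector $(P_n,Q_n)$ has a density $\rho_{(P_n,Q_n)}$. The map $T(p,q)=(p-\frac{v}{2}q,q)$ is a linear bijection of $\R^{2d}$. Its matrix is block upper triangular with identity diagonal blocks, so $\det T=1$. Since $(\bar{\mathfrak{P}}_n,\bar{\mathfrak{Q}}_n)=T(P_n,Q_n)$, the linear change of variables gives, for every $f\in\mscr B_b(\R^{2d};\R)$,
\[
\E f(\bar{\mathfrak{P}}_n,\bar{\mathfrak{Q}}_n)=\int f(T(p,q))\rho_{(P_n,Q_n)}(p,q)\ud p\ud q=\int f(p,q)\rho_{(P_n,Q_n)}(p+\tfrac{v}{2}q,q)\ud p\ud q .
\]
Hence $\rho_n(p,q):=\rho_{(P_n,Q_n)}(p+\frac{v}{2}q,q)$ is a density of $(\bar{\mathfrak{P}}_n,\bar{\mathfrak{Q}}_n)$. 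Because $T$ preserves Lebesgue null sets, $\rho_n$ is also a.e.\ positive.

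\emph{Extension to polynomially growing test functions.} The density identity must hold for $\phi\in C^\infty_{\rm p}(\mbb R^{2d};\mbb R)$, which need not be bounded. I would truncate, taking $\phi_R=(\phi\wedge R)\vee(-R)$. The identity $\E\phi_R(\bar{\mathfrak{P}}_n,\bar{\mathfrak{Q}}_n)=\int\phi_R\rho_n$ holds for each $R$. Letting $R\to\infty$ is justified on both sides by dominated convergence: on the expectation side, $|\phi|\le C(1+\|x\|^l)$ is integrable thanks to the moment bounds of Corollary \ref{cor:Pnunif.}, which transfer to $(\bar{\mathfrak P}_n,\bar{\mathfrak Q}_n)$ by linearity. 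Thus $\E[\phi(\bar{\mathfrak{P}}_n,\bar{\mathfrak{Q}}_n)]=\int\phi\rho_n$. Likewise, $\widetilde\mu(\phi)=\int\phi\widetilde\rho$, and this integral is finite because $\widetilde\rho$ has Gaussian decay in $p$ and decays like $e^{-2vV(q)}$ in $q$, with $V$ growing at least quadratically by Assumption \ref{asp:V0}(ii).

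\emph{Conclusion.} Subtracting the two identities gives
\[
\int\phi(\rho_n-\widetilde\rho)=\E[\phi(\bar{\mathfrak{P}}_n,\bar{\mathfrak{Q}}_n)]-\widetilde\mu(\phi).
\]
Corollary \ref{theo:appromeasure1} bounds the right-hand side by $\widetilde C(e^{-K(\phi)t_n}+\tau)$ for all $n\ge 1$, hence in particular for $n\ge 2$, which is the claimed estimate. The only point needing care is the integrability justification in the truncation step; it is routine given the uniform moment bounds.
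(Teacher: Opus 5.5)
Your proposal is correct and takes the paper's route. The paper simply notes that the density of $(P_n,Q_n)$ from Theorem \ref{thm:density} transfers to $(\bar{\mathfrak{P}}_n,\bar{\mathfrak{Q}}_n)$ through the linear map \eqref{explicitmethod}, and that the estimate is \eqref{sec5eq1} rewritten via densities. Your determinant-one change of variables and the truncation/integrability step supply details the paper leaves implicit; on the density side, $\int|\phi|\rho_n=\E|\phi(\bar{\mathfrak{P}}_n,\bar{\mathfrak{Q}}_n)|<\infty$ follows by monotone convergence.
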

	
	\section{Numerical experiments}\label{Sec:num-exp}
	In this section, we perform several numerical experiments to verify our theoretical results and the superiority of the DSAV method \eqref{explicitmethod}-\eqref{eq:SAVN}. 
	In the experiments, we take the potential $V(q)=\frac{1}{6}(q^3-1)^2+\sin q+2$, $q\in\mbb R$ and $v=1$. Then Assumptions \ref{asp:V0} and \ref{asp:V1} are fulfilled according to Remark \ref{rem:example}, and the parameters in Conditions \ref{condition1} and \ref{condition2} can be set as $\alpha=\frac{1}{2}$, $C_V=0$, and $\beta_0=6$. Throughout this section, the initial value is always set as $(\bar{\mathfrak{P}}(0),\bar{\mathfrak{Q}}(0))=(1,0.5)$.

	First, we test the convergence order of the DSAV method.  We set $T=1$ and compute the numerical solution $(\bar{\mathfrak{P}}_{N},\bar{\mathfrak{Q}}_{N})$ for different stepsizes $\tau=2^{-8},2^{-9},\ldots,2^{-12}$ with $N=T/\tau$. The numerical solution with $\tau=2^{-14}$ is viewed as the ``exact" solution. The weak error is characterized by $err_{w}:=\big|\mbb E\big[\phi(\bar{\mathfrak{P}}_{N},\bar{\mathfrak{Q}}_{N})-\phi(\bar{\mathfrak{P}}(T),\bar{\mathfrak{Q}}(T))\big]\big|$ for three different test functions $\phi_1(p,q)=p^2+q^2$, $\phi_2(p,q)=\sin p\sin q$, and $\phi_3(p,q)=p^4q^4$. Figure \ref{F:order} (a) displays the first-order weak convergence of the DSAV method, coinciding with the theoretical result in Theorem \ref{theo:weakorder}. In addition, we also test the strong convergence order of the DSAV method and the result is depicted in Figure \ref{F:order} (b), where the strong error is computed by $err_{s}:=\big(\mbb E\big[|\bar{\mathfrak{P}}_{N}-\bar{\mathfrak{P}}(T)|^2+|\bar{\mathfrak{Q}}_{N}-\bar{\mathfrak{Q}}(T)|^2\big]\big)^{1/2}$. In this experiment, the expectation is realized based on the Monte Carlo method with the sample sizes $10000$ for weak error and $1000$ for strong error.

	Second, we examine the long-time weak error evolution of the DSAV method.  The numerical solution is computed with the stepsize $\tau=10^{-3}$, and the exact one is realized via the numerical one with $\tau=10^{-4}$. Figure \ref{F:longtimeweak} shows the evolution of the weak error $err_w$ in the time range $[0,100]$, for three different test functions. The expectation is approximated based on the Monte Carlo method with a sample size $10000$. It is observed that the error remains at a small level after the long-time computation, consistent with the result of Theorem \ref{theo:weakorder}.

	Next, we conduct numerical experiments to validate Corollary \ref{theo:appromeasure1}, which asserts that $\mathbb{E}\phi(\bar{\mathfrak{P}}_n,\bar{\mathfrak{Q}}_n)$ provides a robust approximation of the ergodic limit $\widetilde{\mu}(\phi)$ for polynomially growing test functions $\phi$ as time $t_n$ increases and stepsize $\tau$ decreases. We set the stepsize to $\tau=10^{-3}$, and the expectation is approximated based on the Monte Carlo method with a sample size $10000$. Figure \ref{F:ergodic} illustrates the evolution of the error, defined as $ERR:=|\mathbb{E}\phi(\bar{\mathfrak{P}}_n,\bar{\mathfrak{Q}}_n)-\widetilde{\mu}(\phi)|$, for the test functions $\phi \in \{\phi_1, \phi_2, \phi_3\}$. It is observed that the DSAV method achieves a highly accurate approximation, with the error $ERR$ decaying rapidly toward zero.

	Finally, we evaluate the long-time moment stability of the DSAV method by monitoring the evolution of the $p$th moment of the numerical solution, $\mathbb{E}[|\bar{\mathfrak{P}}_n|^{p}+|\bar{\mathfrak{Q}}_n|^{p}]$, for $p \in \{2, 6\}$, with the stepsize $\tau=10^{-2}$. We also include the SSAV method proposed in \cite{DJW25} for comparative analysis. The expectation is approximated based on the Monte Carlo method with a sample size $10000$. As illustrated in Figure \ref{F:moment}, the second and sixth moments of the DSAV method remain bounded over time, whereas those of the SSAV method exhibit polynomial growth. This confirms the superior long-time moment stability of the DSAV method.

	
	\begin{figure}[!htb]
		\centering
		\subfigure[Weak order]{\includegraphics[width=0.48\textwidth]{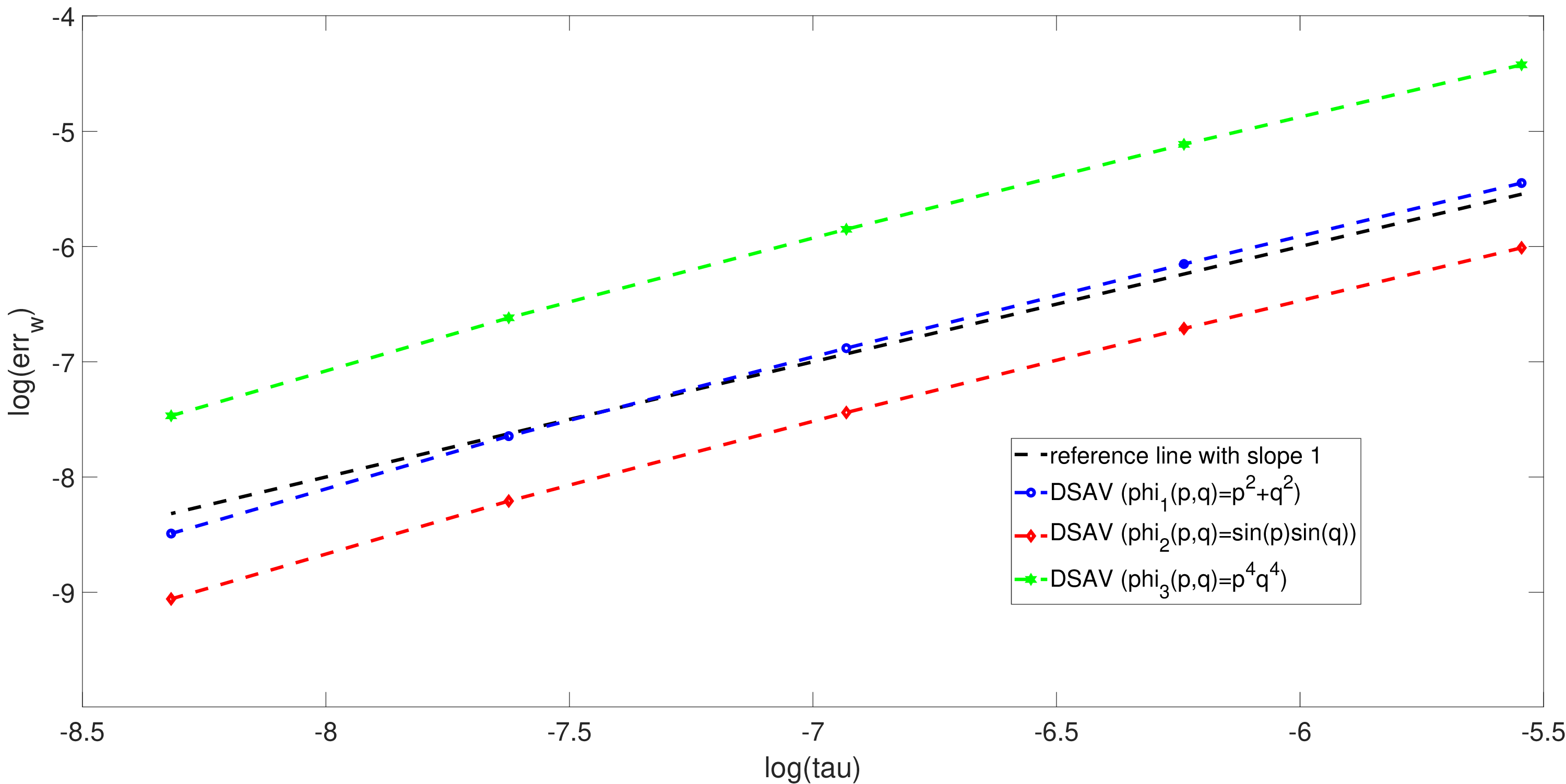}}
		\subfigure[Strong order]{\includegraphics[width=0.48\textwidth]{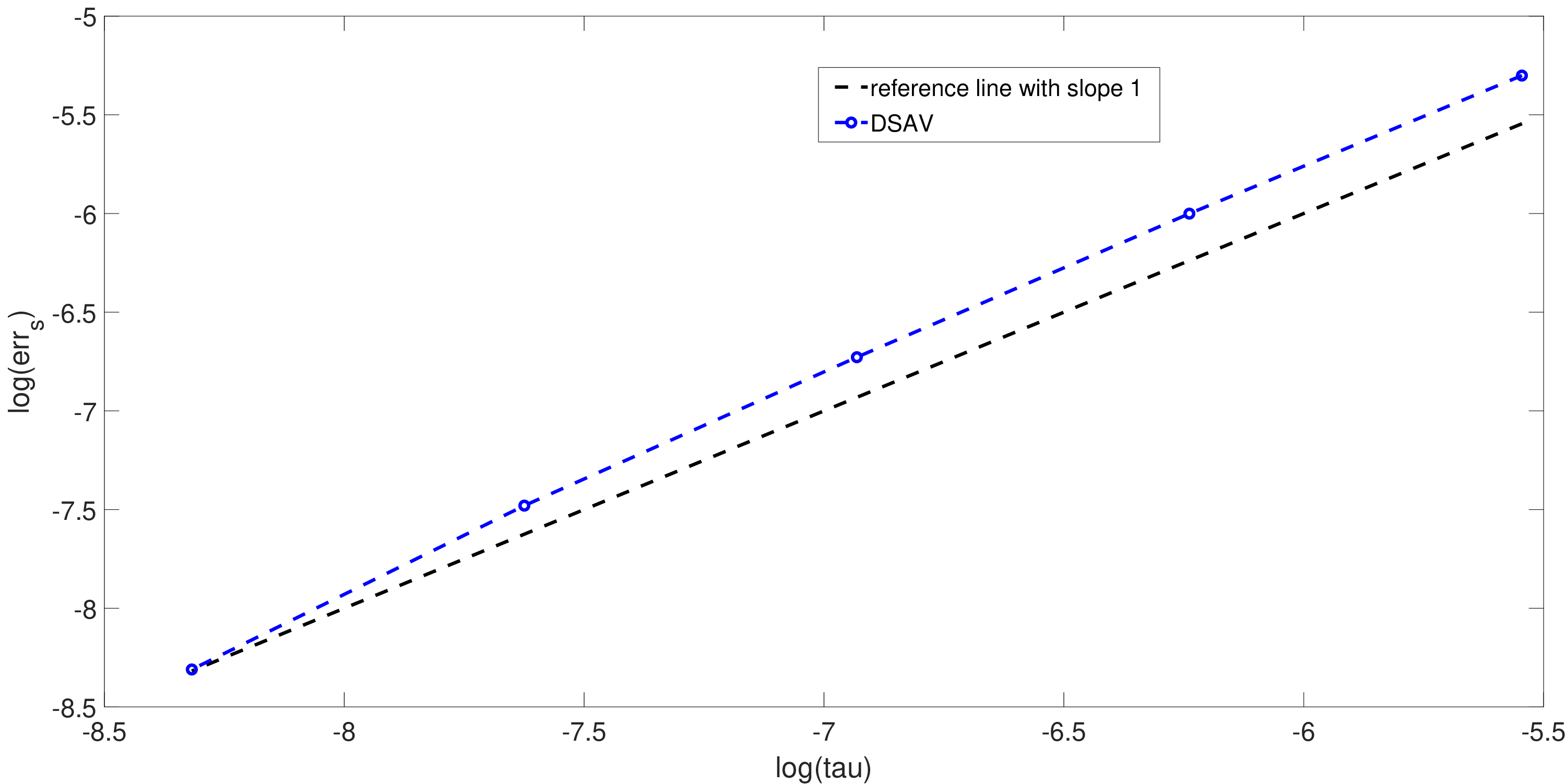}}
		\caption{Convergence order of the numerical method the DSAV method \eqref{explicitmethod}-\eqref{eq:SAVN}.}
		\label{F:order}
	\end{figure}

	\begin{figure}[!htb]
		\centering
		\includegraphics[width=0.96\textwidth]{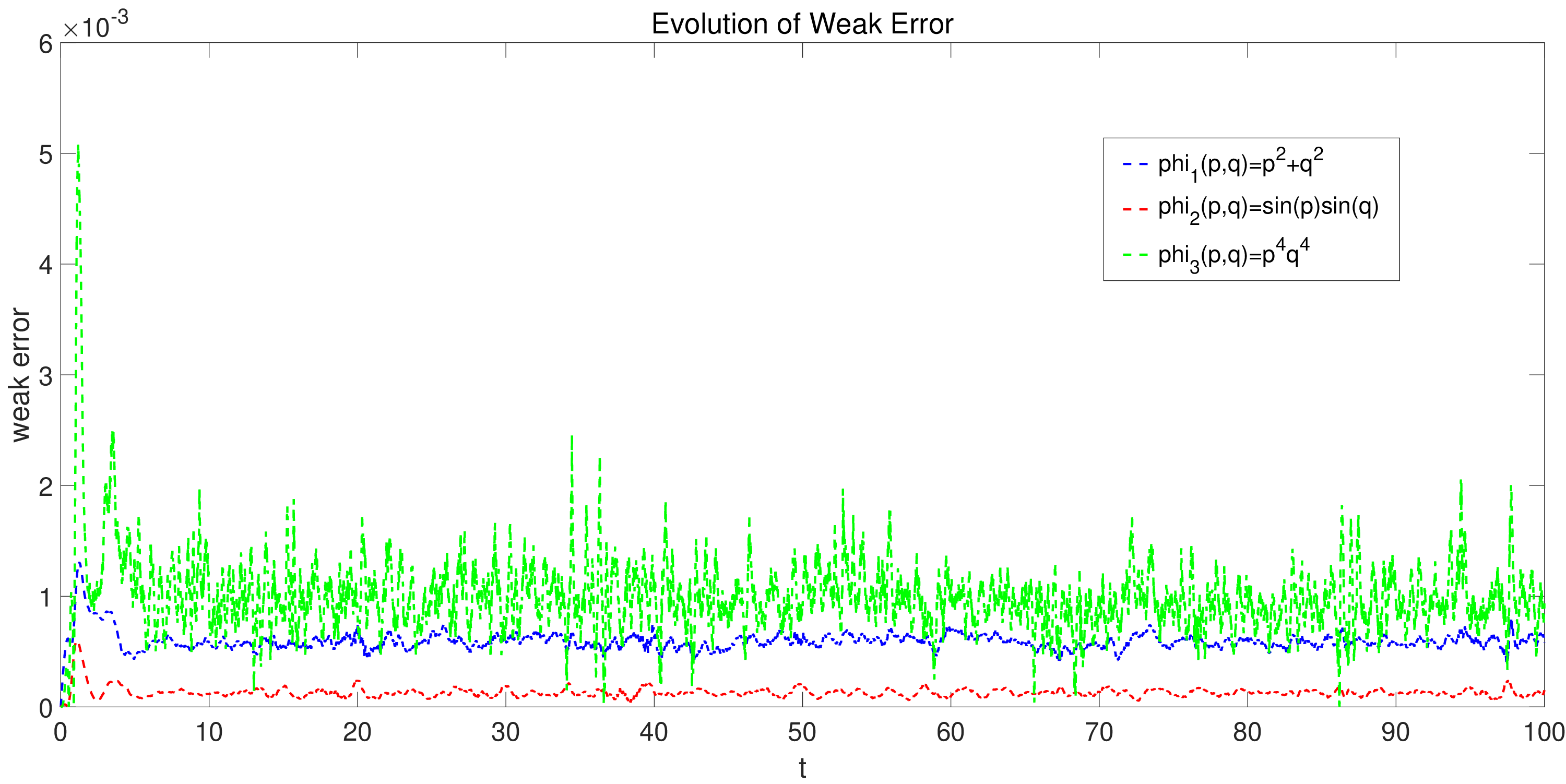}
		\caption{Long-time weak error evolution for different test functions.}
		\label{F:longtimeweak}
	\end{figure}
	
	\begin{figure}[!htb]
		\centering
		\subfigure[$\phi(p,q)=p^2+q^2$]{\includegraphics[width=0.48\textwidth]{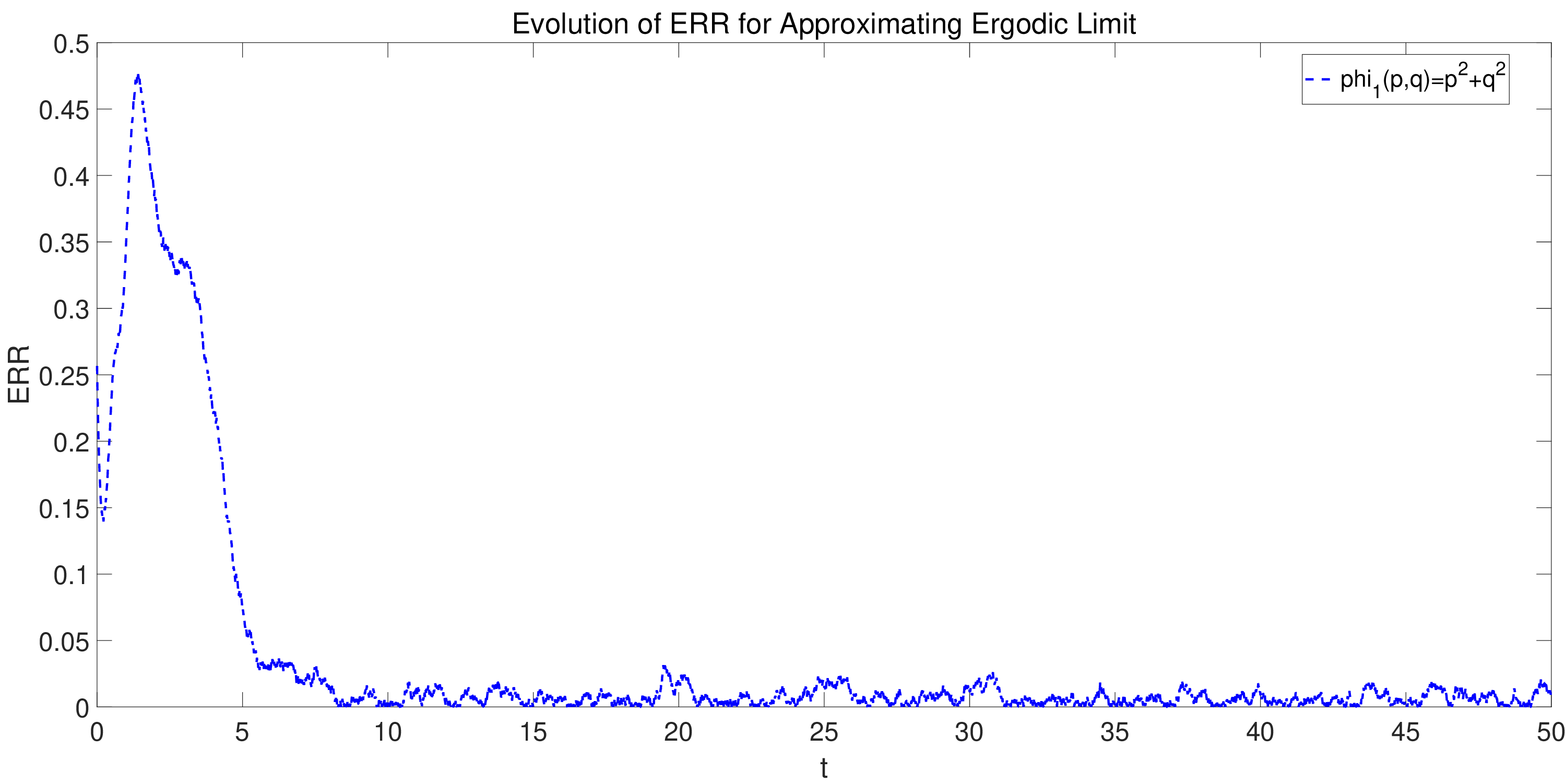}}
		\subfigure[$\phi(p,q)=p^4q^4$]{\includegraphics[width=0.48\textwidth]{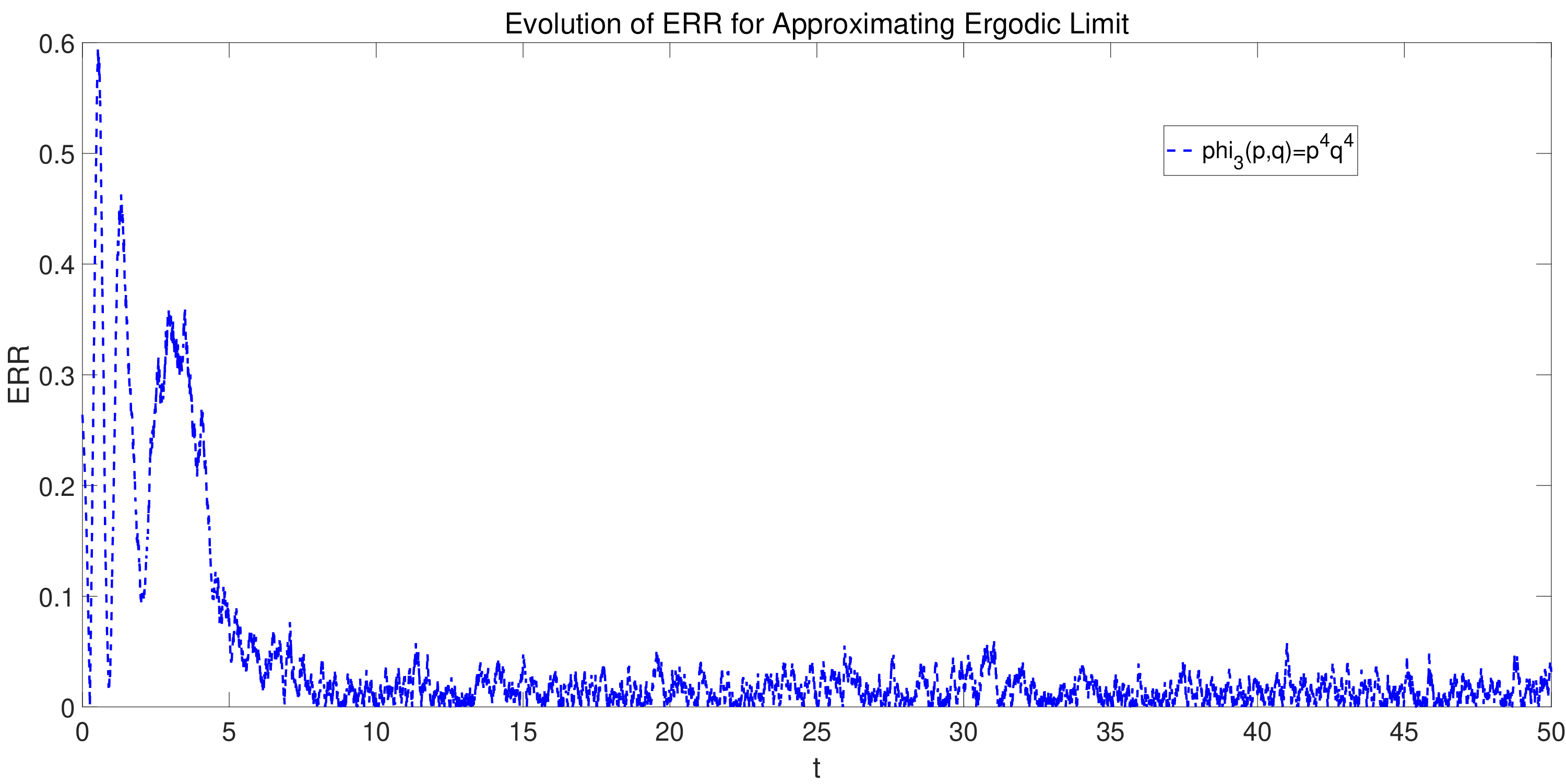}}
		\caption{Evolution of $ERR:=|\mbb E\phi(\bar{\mathfrak{P}}_n,\bar{\mathfrak{Q}}_n)-\widetilde{\mu}(\phi)|$ for different test functions.}
		\label{F:ergodic}
	\end{figure}
	
	\begin{figure}[!htb]
		\centering
		\subfigure[$p=2$]{\includegraphics[width=0.48\textwidth]{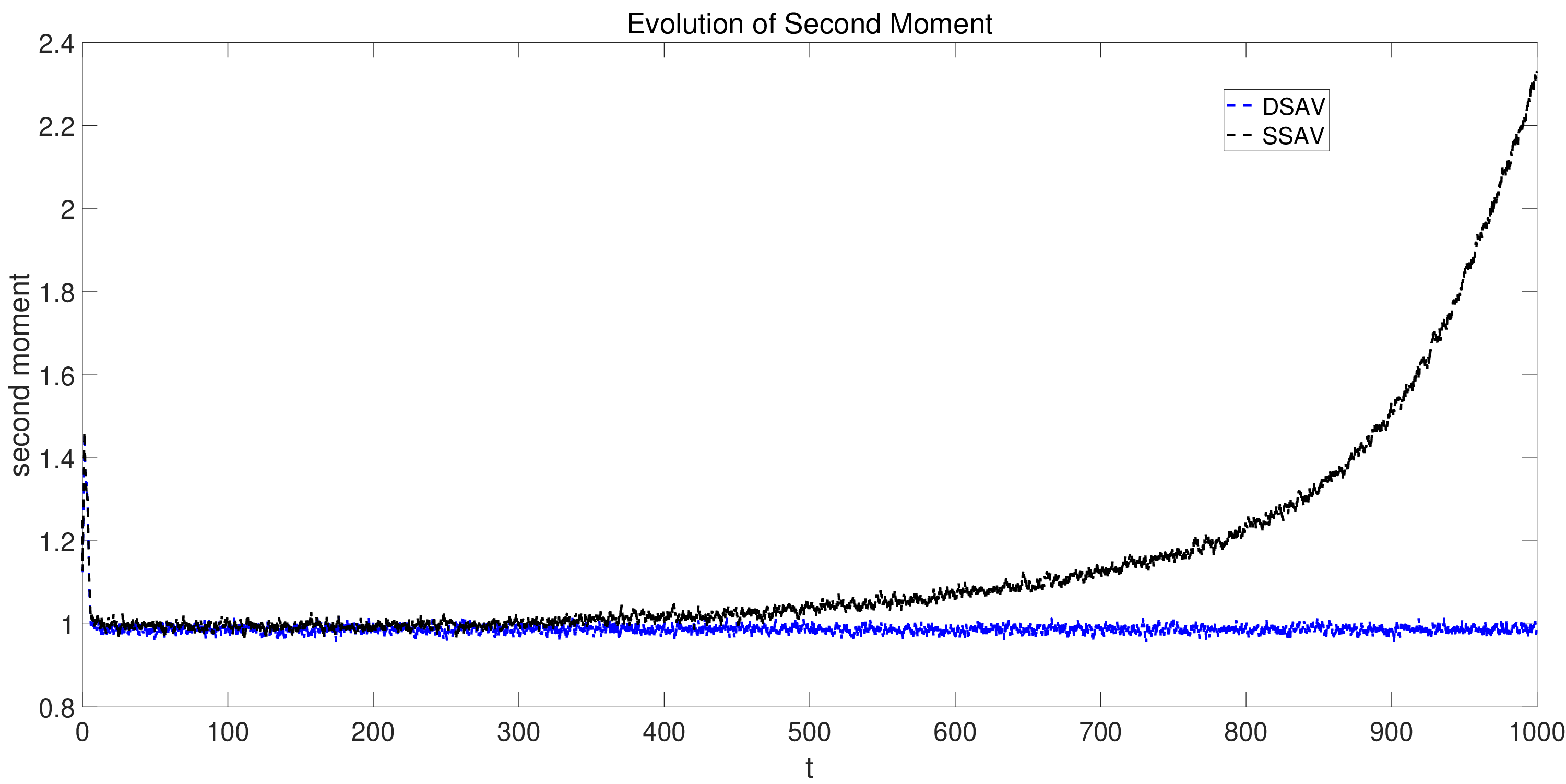}}
		\subfigure[$p=6$]{\includegraphics[width=0.48\textwidth]{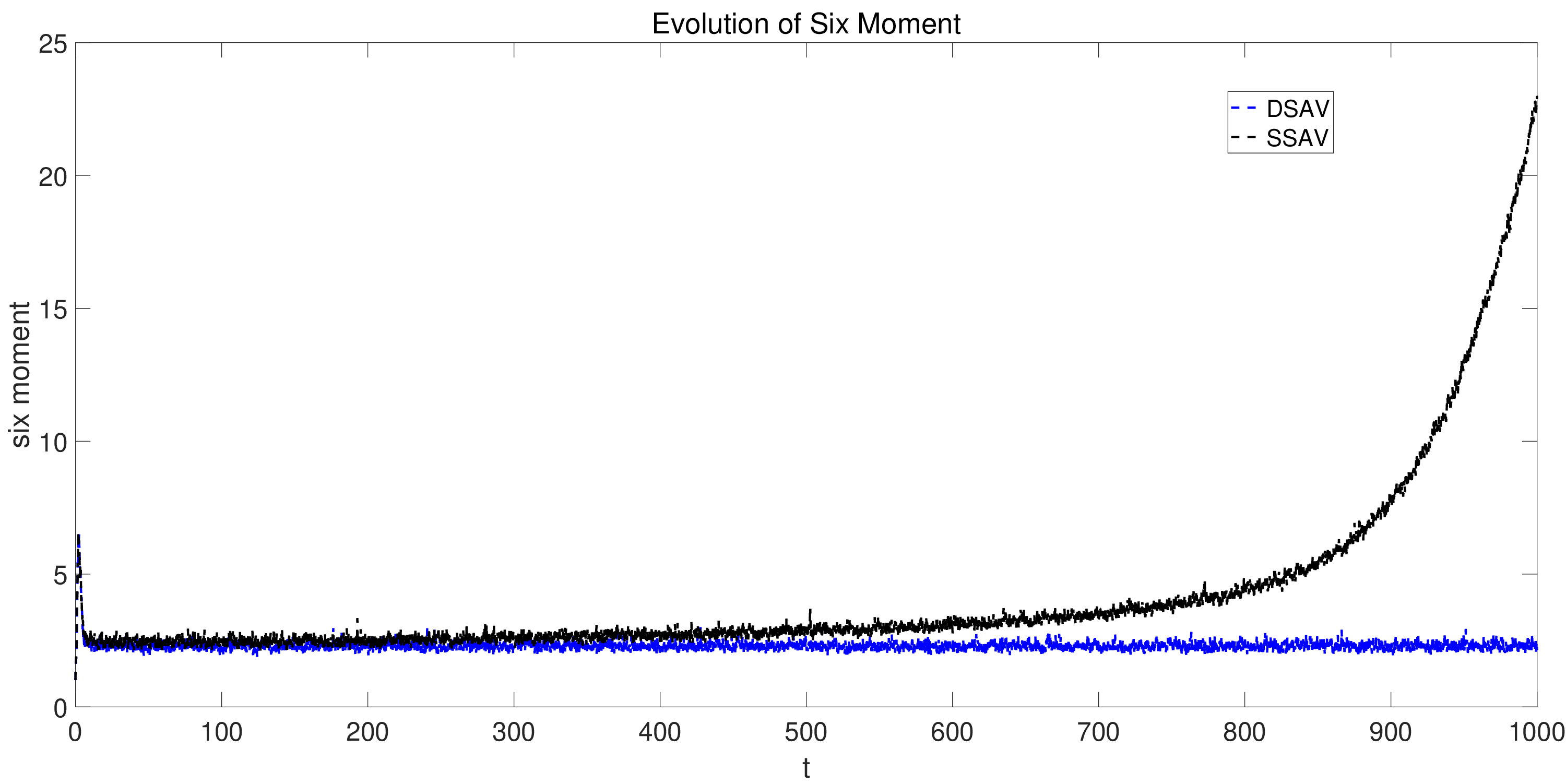}}
		\caption{Eevolution of $p$th moment of numerical solution.}
		\label{F:moment}
	\end{figure}

	\section{Concluding remarks} \label{Sec:conclusion}
	In this work, we construct a new explicit numerical method  for the underdamped Langevin equation \eqref{eq:Langevinorg} with polynomially growing potentials. We give the uniform-in-time weak convergence  and  the convergence speed for approximating the invariant measure of \eqref{eq:Langevinorg}. In addition, the existence and positivity of the density function of the numerical solution are also analyzed.	
	
	We conclude the paper with two remarks.
	\begin{itemize}
		\item The proposed DSAV method also applies to the following systems
		\begin{equation*}
			\left\{
			\begin{split}
				\ud Y(t)&=-\nabla V(X(t))\ud t- v Y(t) \ud t+\sigma\ud W(t),\\
				\ud X(t)&= Y(t)\ud t, 
			\end{split}
			\right.
		\end{equation*}	
		where $\sigma\in\mbb R^{d\times d}$ is an invertible constant matrix. Indeed, by denoting $\sigma^{-1} Y(t)=\bar{\mathfrak{P}}(t)$ and $\sigma^{-1}X(t)=\bar{\mathfrak{Q}}(t)$, the above equation can be transformed as the form of \eqref{eq:Langevinorg}. Then correspondingly, one just adapts \eqref{eq:SAV1} and the  DSAV method \eqref{explicitmethod}-\eqref{eq:SAVN} by replacing $\Delta W_n$ with $\sigma \Delta W_n$. 
		
		\item It is straightforward to prove the Feller property of $\{(P_n,Q_n,r_n)\}$, which together Lemma \ref{Numer-expinter} implies that $\{(P_n,Q_n,r_n)\}$ admits at least an invariant measure. Unfortunately, it is hard for us to show the uniqueness of the invariant measure for both $\{(P_n,Q_n,r_n)\}$ and $\{(P(t),Q(t),r(t))\}$, due to the absence of the parabolic H\"ormander's condition for the extended system \eqref{eq:SAV1} (see Remark \ref{rem:exact-PDF}). With the uniqueness of the invariant measure established, the error analysis between the numerical and exact invariant measures will become an intriguing subject of study.
	\end{itemize}

	\setcounter{equation}{0}
	\setcounter{subsection}{0}
	\setcounter{Def}{0}
	\renewcommand{\theDef}{A.\arabic{Def}}
	\renewcommand{\theequation}{A.\arabic{equation}}
	\renewcommand{\thesubsection}{A.\arabic{subsection}}
	\section*{Appendix A. Proof of Lemma \ref{weakorder}}
	\begin{proof}
		\textbf{Step1: We give the weak error decomposition.}	\\
		Substituting \eqref{eq:rn+1-old} into \eqref{eq:Pn+1} and using \eqref{eq:G2}, we have
		\begin{equation}\label{eq:Pn+1new}
			P_{n+1}=
			\big(-2\tau^2G_0(Q_n)^{-1} G_2(Q_n) G_2(Q_n)^\top+ f_{v,\tau}I\big) \Delta W_n+P_n+\tau\mathcal{P}(P_n,Q_n,r_n),
		\end{equation}
		where $\mathcal{P}(p,q,r):=\frac{f_{v,\tau}-1}{\tau}p-2 G_2(q)G_0(q)^{-1}r_n
		-2 G_2(q)G_0(q)^{-1}G_1(q,p)+ f_{v,\tau}(\frac{v^2}{4} -\alpha)(1+\frac{v}{2}\tau)^{-1}q.$
		Moreover, by \eqref{eq:Qn+1} and \eqref{eq:Pn+1new}, we have
		$	Q_{n+1}=Q_n+\tau\mathcal{Q}(P_n,Q_n,r_n)
		+\tau\left(1+\frac{v}{2}\tau\right)^{-1}\left\{-2\tau^2G_0(Q_n)^{-1} G_2(Q_n) G_2(Q_n)^\top+ f_{v,\tau}I\right\} \Delta W_n,$
		where the function \\$\mathcal{Q}(p,q,r):=-\frac{v}{2}\left(1+\frac{v}{2}\tau\right)^{-1}q+\left(1+\frac{v}{2}\tau\right)^{-1}(p+\tau\mathcal{P}(p,q,r))$.
		Letting $Y_n=(P_n^\top,Q_n^\top)^\top$, we have $	Y_{n+1}=Y_n+\tau f_n+g_n \Delta W_n$, where the drift coefficient $f_n=(\mathcal{P}(P_n,Q_n,r_n)^\top,\\\mathcal{Q}(P_n,Q_n,r_n) ^\top)^\top$ and the diffusion coefficient
		\begin{align*}
			g_n:=\left(
			\begin{array}{c}
				I \\ \tau\left(1+\frac{v}{2}\tau\right)^{-1}I
			\end{array}
			\right)\left\{-2\tau^2G_0(Q_n)^{-1} G_2(Q_n)G_2(Q_n)^\top+ f_{v,\tau}I\right\}. 
		\end{align*} 
		Define the stochastic process $Y^\tau(s)=Y_n+\int_{t_n}^s f_n \ud s+\int_{t_n}^{s} g_n \ud W(s)$ for $s\in(t_{n},t_{n+1}]$ and $n\in\mbb N$. Notice that $Y^\tau(t_n)=Y_n$ for all $n\in\mbb N$.

		By the It\^o formula and \eqref{eq:Kol}, we have the following weak error decomposition:
		\begin{align}\label{errordecom}
			&\quad~\E[\psi(P_N,Q_N)]-\E[\psi(\mathfrak P(t_N,P_0,Q_0),\mathfrak Q(t_N,P_0,Q_0))] \notag\\
			&=\E[u^\psi(0,P_N,Q_N)]-\E[u^\psi(t_N,P_0,Q_0)]\\
			&=\sum_{n=0}^{N-1}\E\left[u^\psi(t_N-t_{n+1},Y^\tau(t_{n+1}))-u^\psi(t_N-t_{n},Y^\tau(t_n))\right]\notag\\
			&=\sum_{n=0}^{N-1}\E\int_{t_n}^{t_{n+1}}\Big\{-\partial_t u^\psi(t_N-s,Y^\tau(s))
			+\langle \nabla u^\psi(t_N-s,Y^\tau(s)),f_n\rangle\notag\\
			&\qquad\qquad\qquad\qquad+\frac12\mathrm{tr}(g_ng_n^\top \nabla^2u^\psi(t_N-s,Y^\tau(s)))\Big\}\ud s\notag\\
			&=\sum_{n=0}^{N-1}\E\int_{t_n}^{t_{n+1}}\Big\{\left\langle \nabla u^\psi(t_N-s,Y^\tau(s)),f_n-B(Y^\tau(s))\right\rangle \notag\\
			&\qquad\qquad\qquad\qquad+\frac12\mathrm{tr}\left((g_ng_n^\top-GG^\top)\nabla^2u^\psi(t_N-s,Y^\tau(s))\right)\Big\}\ud s. \notag
		\end{align}
		
		\textbf{Step2: We estimate the right-hand side term of \eqref{errordecom}.}	\\	
		By the It\^o formula, it holds that for any $s\in(t_n,t_{n+1})$, 
		\begin{align}\label{fn-BYtaus}
			f_n-B(Y^\tau(s))=f_n-B(Y_n)+J_1(s)-\int_{t_n}^s \langle \nabla B(Y^\tau(r)), g_n \ud W(r)\rangle,
		\end{align}
		where	$J_1(s):=-\int_{t_n}^s \big(\langle \nabla B(Y^\tau(r)), f_n\rangle+\frac12\mathrm{tr}(g_ng_n^\top \nabla^2B(Y^\tau(r)))\big)\ud r$.

		Denote by $\big(f_n-B(Y_n)\big)_1$  (resp.\ $\big(f_n-B(Y_n)\big)_2$) the first $d$ (resp.\ the last $d$) components of $f_n-B(Y_n)$. 
		Then we have
		$		\big(f_n-B(Y_n)\big)_2=(1+\frac{v}{2}\tau)^{-1}\tau\big(-\frac{v}{2}P_n+\frac{v^2}{4}Q_n+\mcal P(P_n,Q_n,r_n)\big).
		$
		Lemma \ref{cor:Pnunif.} and $\mcal P\in C^\infty_{\rm p}(\mbb R^{2d+1};\mbb R^d)$ yield that for any $p\ge 1$,
		\begin{align}\label{sec4eq8}
			\sup_{n\ge 0}\mbb E\|\big(f_n-B(Y_n)\big)_2\|^p\le K\tau ^p.
		\end{align}
		
		By the definition of $\mcal P$ and $f_{v,\tau}$, we have
		$\big(f_n-B(Y_n)\big)_1=I_1+I_2,$
		where
		\begin{align*}
			I_1:= &\;(1+v\tau+\alpha\tau^2)^{-1}\big(\frac{v^2}{2}-\alpha+\frac{\alpha v}{2}\tau\big)\tau P_n
			-2\tau G_2(Q_n)G_0(Q_n)^{-1}\big(G_1(Q_n,P_n)/\tau)\\
			&\;-(\frac{v^2}{4} -\alpha)(1+v\tau+\alpha\tau^2)^{-1}\left(v+\alpha\tau\right)\tau Q_n,\\
			I_2:= &\left(\nabla V(Q_n)-\alpha Q_n\right)-2 G_2(Q_n)G_0(Q_n)^{-1}r_n.
		\end{align*}	
		It follows from $V\in C^\infty_{\rm p}(\mbb R^d;\mbb R)$ and Corollary \ref{cor:Pnunif.} that for any $p\ge1$,
		\begin{align}\label{eq:I1p}
			\E[\| I_1\|^p]\le K\tau^p.
		\end{align}
		By \eqref{eq:G0} and \eqref{eq:G2}, 
		\begin{align*}
			&I_2=\frac{\nabla V(Q_n)-\alpha Q_n}{\sqrt{U_\alpha(Q_n)}}\left(\sqrt{U_\alpha(Q_n)}-r_n\right)\\
			&\quad+\frac{ r_n(\nabla V(Q_n)-\alpha Q_n)}{\sqrt{U_\alpha(Q_n)}}\Big(1-f_{v,\tau}\big(1+\frac{v}{4}\tau\beta_0+\frac{\tau^2f_{v,\tau}}{2U_\alpha(Q_n)} \|\nabla V(Q_n)-\alpha Q_n\|^2\big)^{-1}\Big).
		\end{align*}
		A direct computation leads to
		\begin{align*}
			&\;1-f_{v,\tau}\Big(1+\frac{v}{4}\tau\beta_0+\frac{\tau^2f_{v,\tau}}{2U_\alpha(Q_n)} \|\nabla V(Q_n)-\alpha Q_n\|^2\Big)^{-1}\\
			=&\;\Big(1+\frac{v}{4}\tau\beta_0+\frac{\tau^2f_{v,\tau}}{2U_\alpha(Q_n)} \|\nabla V(Q_n)-\alpha Q_n\|^2\Big)^{-1}(1+v\tau+\alpha\tau^2)^{-1}\\
			&\;\times\Big(\big(1+\frac{v}{4}\tau\beta_0+\frac{\tau^2f_{v,\tau}}{2U_\alpha(Q_n)} \|\nabla V(Q_n)-\alpha Q_n\|^2\big)(1+v\tau+\alpha\tau^2)-(1+\frac{v}{2}\tau)\Big).
		\end{align*}
		It is observed that the last term in the above formula is of first order in terms of $\tau$.	Hence applying the H\"older inequality, Corollary \ref{cor:Pnunif.}, and Lemma \ref{lem:rn-sqrt}, we obtain that 		$	\E[\|I_2\|^p] \le K\tau^p$ for $1\le p<2$,
		which, in combination with \eqref{eq:I1p}, yields that 
		$
		\E\left[\| \big(f_n-B(Y_n)\big)_1\|^p\right]\le K\tau^p$ for any $1\le p<2.$
		This, along with \eqref{sec4eq8}, gives that for any $1\le p<2$,
		\begin{align}\label{fn-BYn}
			\E\left[\| f_n-B(Y_n)\|^p\right]\le K\tau^p.
		\end{align}
		
		It follows from $V\in C^\infty_{\rm p}(\mbb R^d;\mbb R)$ and Corollary \ref{cor:Pnunif.} that for any $p\ge1$, there exists a constant $K(p)$ such that for any $s\in(t_n,t_{n+1}]$,
		$\E\left[\|J_1(s)\|^p\right]\le K(p)\tau^p$.
		Hence, by the H\"older inequality, Corollary \ref{cor:Pnunif.}, Lemma \ref{lem:exp}, and \eqref{fn-BYn}, 
		\begin{align}\label{eq:fnB}
			&\;\Big|\sum_{n=0}^{N-1}\E\int_{t_n}^{t_{n+1}}\Big\langle \nabla u^\psi(t_N-s,Y^\tau(s)),f_n-B(Y_n)+J_1(s)\Big\rangle\ud s\Big|\\\notag
			\le&\; K \sum_{n=0}^{N-1}\int_{t_n}^{t_{n+1}}\E\left[(1+\|Y^\tau(s)\|^{\eta(1)})e^{-\gamma(t_N-s)}\| f_n-B(Y_n)+J_1(s)\|\right]\ud s\\
			\le&\; K\tau\int_0^{t_N}e^{-\gamma(t_N-s)}\ud s\le K\tau.\notag
		\end{align}			
		In addition, the mean value theorem gives		
		\begin{align*}
			&\;\E\int_{t_n}^{t_{n+1}}\Big\langle \nabla u^\psi(t_N-s,Y^\tau(s)),\int_{t_n}^s \langle \nabla B(Y^\tau(r)), g_n \ud W(r)\rangle\Big\rangle\ud s\\
			=&\;\E\Big[\E \Big(\int_{t_n}^{t_{n+1}}\Big\langle \nabla u^\psi(t_N-s,Y^\tau(t_n)),\int_{t_n}^s \langle \nabla B(Y^\tau(r)), g_n \ud W(r)\rangle\Big\rangle\ud s\Big| \mcal F_{t_n}\Big)\Big]\\
			&\;+\E\int_{t_n}^{t_{n+1}} \int_0^1D^2u^\psi(t_N-s,\theta_1 Y^\tau(s)+(1-\theta_1)Y^\tau(t_n))\\
			&\qquad\qquad\qquad\Big( Y^\tau(s)-Y^\tau(t_n),\int_{t_n}^s \langle \nabla B(Y^\tau(r)), g_n \ud W(r)\rangle\Big)\ud \theta_1\ud s\\
			=&\;\E\int_{t_n}^{t_{n+1}} \int_0^1D^2u^\psi(t_N-s,\theta_1 Y^\tau(s)+(1-\theta_1)Y^\tau(t_n))\\
			&\qquad\qquad\qquad\Big( Y^\tau(s)-Y^\tau(t_n),\int_{t_n}^s \langle \nabla B(Y^\tau(r)), g_n \ud W(r)\rangle\Big)\ud \theta_1\ud s.
		\end{align*}
		By $V\in C^\infty_{\rm p}(\mbb R^d;\mbb R)$ and Corollary \ref{cor:Pnunif.}, we infer that for any $p\ge1$ and $s\in(t_n,t_{n+1}]$, 
		\begin{align*}
			\E\left[\Big\|\int_{t_n}^s\langle \nabla B(Y^\tau(r)), g_n \ud W(r)\rangle\Big\|^p\right]+\E\left[\|Y^\tau(s)-Y^\tau(t_n)\|^p\right]\le K\tau^{\frac{p}{2}}.
		\end{align*}
		Consequently, by H\"older's inequality and Lemma \ref{lem:exp},
		\begin{align*}
			&\left|\sum_{n=0}^{N-1}\E\int_{t_n}^{t_{n+1}}\left\langle \nabla u^\psi(t_N-s,Y^\tau(s)),\int_{t_n}^s \langle \nabla B(Y^\tau(r)), g_n \ud W(r)\rangle\right\rangle\ud s\right|\\
			&\le K\tau\sum_{n=0}^{N-1}\int_{t_n}^{t_{n+1}} \!\!\!\!\int_0^1\!\!\left(\E\big(1\!+\|\theta_1 Y^\tau(s)\!+\!(1-\theta_1)Y^\tau(t_n)\|^{\eta(2)}\big)^2\right)^{\frac12}e^{-\gamma(t_N-s)}\ud \theta_1\ud s
			\le K\tau.
		\end{align*}
		This inequality, together with \eqref{fn-BYtaus} and \eqref{eq:fnB}, yields
		\begin{align}\label{estimate1}
			\left|\sum_{n=0}^{N-1}\E\int_{t_n}^{t_{n+1}}\left\langle \nabla u^\psi(t_N-s,Y^\tau(s)),f_n-B(Y^\tau(s))\right\rangle
			\ud s\right|\le K\tau.
		\end{align}
		It remains to estimate
		$\sum_{n=0}^{N-1}\E\int_{t_n}^{t_{n+1}}\frac12\mathrm{tr}\big((g_ng_n^\top-GG^\top)\nabla^2u^\psi(t_N-s,Y^\tau(s))\big)\ud s.$
		Notice that 
		\begin{align*}
			g_n-G=\left(
			\begin{array}{c}
				-2 \tau^2G_0(Q_n)^{-1} G_2(Q_n) G_2(Q_n)^\top+ (f_{v,\tau}-1)I \\ \tau\left(1+\frac{v}{2}\tau\right)^{-1}
				\left(-2 \tau^2G_0(Q_n)^{-1} G_2(Q_n) G_2(Q_n)^\top+ f_{v,\tau}I\right)
			\end{array}
			\right).
		\end{align*}
		It is easy to verify that for any $p\ge1$, there exists a constant $K(p)>0$ such that 
		$$\E\left[\|g_n-G\|^p\right]\le K(p)\tau^p,\quad \E\left[\|g_n\|^p\right]\le K(p),\quad n\in \N.$$
		Applying H\"older's inequality and Lemma \ref{lem:exp}, we have
		\begin{align*} 
			\left|\sum_{n=0}^{N-1}\E\int_{t_n}^{t_{n+1}}\frac12\mathrm{tr}\left((g_ng_n^\top-GG^\top)\nabla^2u^\psi(t_N-s,Y^\tau(s))\right)\ud s\right|\le K(p)\tau,
		\end{align*}
		which, combined with \eqref{errordecom} and \eqref{estimate1}, yields the desired result. 
	\end{proof}


\bibliographystyle{abbrv}
\bibliography{mybibfile}

\end{document}